\numberwithin{equation}{section}
\def\ra{\rightarrow}
\def\longra{\longrightarrow}
\def\mpo{\mapsto}
\def\bsh{\backslash}
\def\ify{\infty}
\def\ov{\overline}
\def\sbs{\subset}
\def\ts{\times}
\def\a{\alpha}
\def\b{\beta}
\def\d{\delta}
\def\D{\Delta}
\def\e{\varepsilon}
\def\g{\gamma}
\def\G{\Gamma}
\def\lb{\lambda}
\def\om{\omega}
\def\Om{\Omega}
\def\Si{\Sigma}
\def\ve{\varepsilon}
\def\vp{\varphi}
\def\z{\zeta}
\def\cB{{\cal B}}
\def\cC{{\cal C}}
\def\cD{{\cal D}}
\def\cO{{\cal O}}
\def\cS{{\cal S}}
\def\cW{{\cal W}}
\def\cX{{\cal X}}
\def\tcx{\tilde{\cal X}}
\def\tb{\tilde{B}}
\def\tz{\tilde{Z}}
\def\dz{\frac{\partial}{\partial z}}
\def\ba{\mathbb A}
\def\bc{\mathbb C}
\def\bg{\mathbb G}
\def\bn{\mathbb N}
\def\bp{\mathbb P}
\def\bq{\mathbb Q}
\def\br{\mathbb R}
\def\bz{\mathbb Z}
\def\dm{d\mu}
\def\dn{d\nu}
\def\pa{\partial}
\def\un{{\rm 1\mkern-4mu I}}
\def\build#1_#2^#3{\mathrel{
\mathop{\kern 0pt#1}\limits_{#2}^{#3}}}
\def\dc{d^{c}}
\def\ddc{d\dc}
\def\nint{{\int\!\!\!\!\!\!\nabla}}
\def\Kob{\mathrm{Kob}}
\def\Kob{\mathrm{Kob}}
\newtheorem{thm}{Theorem}[subsection]
\newtheorem{defn}[thm]{Definition}
\newtheorem{warning}[thm]{Warning}
\newtheorem{claim}[thm]{Claim}
\newtheorem{sch}[thm]{Scholion}
\newtheorem{inter}[thm]{Intermission}
\newtheorem{fact}[thm]{Fact}
\newtheorem{factd}[thm]{Fact/Defintion}
\newtheorem{choiced}[thm]{Defintion/Choice}
\newtheorem{summaryd}[thm]{Summary/Defintion}
\newtheorem{cor}[thm]{Corollary}
\newtheorem{rmk}[thm]{Remark}
\newtheorem{prop}[thm]{Proposition}
\newtheorem{lem}[thm]{Lemma}
\newtheorem{ex}[thm]{Example}
\newtheorem{exd}[thm]{Example/Definition}
\title{The Bloch Principle}
\author{Michael McQuillan}
\date{}
\begin{document}
\maketitle
\noindent{\scriptsize We formulate and prove an optimal version
for quasi-projective surfaces
of A.~Bloch's dictum,
``Nihil est in infinito quod prius non fuerit in
finito'' by way of a complement
to a theorem of J.~Duval.}

\subsection*{Introduction}

In \cite{B}, A.~Bloch, obtained a substantive
generalisation of Montel's theorem (maps
of the disc to 
$\bp^1 \bsh \{ 0,1,\infty \}$ are normal) to the case of $\bp^2 \bsh 
\{\hbox{4 lines}\}$, with H.~Cartan's thesis, \cite{cart}, being
the same in arbitrary dimension, {\it i.e.}
$\bp^n \bsh \{n+1\, \hbox{planes}\}$, $n\geq 3$, or, perhaps,
$n\geq 2$ given a missing proof of a lemma
in the former which Cartan provided. The precise
statement is as follows: choose a basis $X_i$ 
of a $n+1$ dimensional vector spaces, and thus
identify the simple normal crossing divisor, $B$,
consisting of $n+2$ planes in $\bp^n$ in general 
position with the coordinate hyperplanes
$X_i=0$ together with the hyperplane 
$X_0 + \cdots + X_{n+1} = 0$, similarly for
every $ I \subset \{ 0 , \ldots , n+1 \}\,\,$,
$2\leq |I|\leq n\,\,$ there are diagonal
hyperplanes $\Lambda_I= \build\sum_{i \, \in \, I}^{} X_i = 0$,
every $\Lambda_I\bsh B$ is covered by $\bg_{\mathrm{m}}$'s,
and the union $Z$ of the $\Lambda_I$ is,
an identification
of the smallest Zariski closed subset $Z\sbs \bp^n$, such
that (Theorem of Borel) every non-trivial (a precision
which will, henceforth be eschewed) entire map $f:\bc\ra\bp^n\bsh B$ factors
through $Z$, then a sequence of holomorphic
discs $f_n:\D\ra \bp^n\bsh B$ which is not
(compact-open sense) arbitrarily close to $Z$
admits a subsequence converging uniformly
on compact sets.  

Bloch, \cite{B}, makes his pleasure in the
theorem quite clear, and, arguably with good
reason, since it is way more difficult than
the relatively trivial assertion and/or exercise
in the definition of the Wronskian that 
every  entire map $f:\bc\ra\bp^n\bsh B$ factors
through $Z$. The latter is, of course, for $n=2$, the
motivating ``infinito'' to which his dictum referred,
while the uniform convergence for discs
not arbitrarily close to $Z$ is the ``in finito''.
Nevertheless, for
a very long time this theorem of 
Bloch/Cartan's thesis was wholly sui generis 
even to the point of there being no other
non-obvious ({\it i.e.} not negatively
curved or similar) examples of quasi-projective
varieties $(X,B)$ (so, inter alia, projective 
varieties $X$ if the boundary $B$ is empty),
such that one could assert that the property,

\noindent{\bf Infinito} {\it There is a proper Zariski closed
subset $Z\sbs X$ (implicitly without generic points
in $B$) such that any  entire map 
$f:\bc\ra X\bsh B$ factors through $Z$.}

\noindent was occasioned by a stronger theorem ``in finito'',
such as normal convergence for sequences of discs not
arbitrarily close to $Z$. To some extent this
changed with the appearance of Brody's lemma,
\cite{brody}, {\it i.e.} for $X$ not just
projective, but compact, infinito holds
with $B=Z=\emptyset$ iff every sequence
of holomorphic discs $f_n:\D\ra X$ has a
uniformly convergent sub-sequence, together
with Mark Green's complement, \cite{green}, in the 
presence of a boundary, {\it i.e.} everything as above, so again 
$Z=\emptyset$, and furthermore if $B=\sum_i B_i$ is a sum
of Cartier divisors then for every set of components
$I$, no stratum $B_I:=\cap_{i\in I}B_i\bsh \cup_{j\notin I} B_j$
admits an entire curve. Consequently although Brody's lemma,
and modulo the boundary restrictions, Green's complement,
turned Bloch's principle into a theorem for $Z=\emptyset$ 
the Bloch-Cartan theorem remained sui generis as 
the only example of the Bloch principle with a non-empty exceptional
set $Z$.

The first step to new examples occurred when
infinito was established, \cite{ihes}, for
surfaces, $X$, of general type with 
$\mathrm{s}_2=\mathrm{c}_1^2 - \mathrm{c}_2>0$. However,
under such hypothesis, unlike the Bloch theorem, 
$Z\bsh B$, may admit,
projective, $\bp^1$, or, affine lines,
$\ba^1$, and should this occur there are sequences of discs
$f_n:\D\ra X\bsh B$ which are not arbitrarily
close to $Z$ but which do not converge uniformly
on compact sets. Such discs are, however, rather
small, and, appropriately understood do actually
converge, {\it i.e.} not uniformly, but in the
sense of Gromov, to a disc with  bubbles. Indeed,
many so called ``counterexamples'' to the 
Bloch principle are of this type, which leads us
to pose the best possible variant,

\noindent{\bf In finito} {\it A sequence of holomorphic discs $f_n:\D\ra X\bsh B$
not arbitrarily close to $Z\cup B$ (or possibly just $Z$) has
a subsequence converging to a disc with bubbles.}

\noindent so that, conjecturally, \cite{icm}, the Bloch principle
becomes infinito iff in finito, and, this was established
for the said class of surfaces in \cite{bloch}, which,
in turn, apart from the Bloch-Cartan theorem, constituted
not only the unique examples of the Bloch principle with
$Z\neq\emptyset$, but had a feature that Bloch-Cartan does
not, {\it viz:} the necessary appearance of bubbles in finito.

Of course, \cite{bloch}, gave a proof of the Bloch
theorem for $\bp^2\bsh\{\hbox{4 lines}\}$ en passant,
and, although cleaner, neither here, nor in the
study of quasi-projective surfaces $X\bsh B$
with $\mathrm{s}_2>0$ did it liberate itself from the
same methodological flaw of Bloch-Cartan, {\it i.e.}
rather than attempt to prove infinito implies in finito
directly, one was simply doing the infinito proof
for the corresponding class of varieties better, 
and, as it happens, with a similar, albeit
reversed, inductive structure to Bloch-Cartan, with
a view to proving the in finito assertion directly. Worse, albeit
this is properly discussed below, such a
variant of the Bloch-Cartan strategy can only work
in the presence of optimal theorems on the variation in 
the normal direction of discs which are approximately 
solutions of certain O.D.E.'s. This holds for 
$\mathrm{s}_2>0$, \cite{marco}, but may fail otherwise,
\cite{ext}. 

As such, the methodology was clearly un-sustainable,
and the key development permitting one to go directly
ex infinito in finitum is a much better version of
Brody's lemma due to Julien Duval, \cite{duval},
to wit, for $X$ compact, let $f_n:\D\ra X$ be a 
sequence of discs defined up to the boundary
such that the ratio of the length of the same
to the area goes to zero, then, for $A$ the
(more correctly a up to subsequencing) resulting
closed positive (Ahlfors) current: if $A$ has mass on a
compact set $K\sbs X$, then there is an entire map
with bubbles cutting $K$. This is a much more 
finely tuned instrument than \cite{brody}, {\it e.g.},
already for $Z=B=\emptyset$,
it gives not just infinito iff in finito but, \cite{duval}, iff
Gromov's isoperimetric inequality. Another corollary,
was that \cite{bloch} could supersede itself,
{\it i.e.} the ideas therein, and, in fact,
the O.D.E. free ones, could be used to establish
the Bloch principle for any quasi-projective surface.
This, modulo bearing in mind that op. cit. is a
pre-print, so, we repeat some of its contents 
to ensure no logical appeal to it, is the
content of the present, which we may now summarise.
 
In the first place, contrary to the assertion in
the introduction of \cite{duval}, it is not true
that McQuillan theory uses closed currents in the
sense of Ahlfors, \ref{eq:cint}, but rather in the
sense of Nevanlinna, \ref{eq:cnev}. Alternatively
it chooses a point, \S \ref{SS:area}, on a Riemann-surface
should this have a border. This choice is, ultimately
just a means to an end, {\it i.e.} establishing choice
free theorems, but it is essential in bringing
algebraic geometry into play by way of \ref{ex:compact},
so that in contradistinction to ``usual Nevanlinna
theory'' the term at the origin goes from the least 
to the most
important. This also poses problems, the first
of which is to extract closed positive currents
in the Nevanlinna sense from discs, which is
sufficiently more difficult than the Ahlfors
sense, that we've only done it, \ref{prop:closed},
under the hypothesis of projectivity of the target.
Similarly we need Duval's theorem in the Nevanlinna
rather than the Ahlfors sense, albeit we reduce
the former to the latter in \ref{fact:almost} and
\ref{fact:nobloch}. Such considerations are enough
to get us to the starting line if 
$B=\emptyset$, {\it i.e.} if infinito holds but
in finito fails there is a closed, and without
loss of generality nef., positive current
$T$ arising from discs, in the Nevanlinna sense, supported on $Z$.
In the case of a non-empty boundary, however,
one must give conditions that bubbles do not
form in the boundary, \ref{defn:nobubbles},
which in the first approximation amounts to
Green's condition on strata above but for
$\ba^1$'s rather than $\bc$'s, \ref{lem:green2}.
Such a condition holds in an almost \'etale neighbourhood
of the boundary of the minimal model of any quasi-projective
surface which is not covered by $\ba^1$'s, but
such an almost (\'etale) boundary condition does not
imply no-bubbling in the boundary (there are counterexamples)
whereas the condition of being a minimal model
does, \ref{prop:nobubbles}, which, again, 
since it's a bit subtle, we
only prove in the quasi-projective case, even
though it has a perfect almost complex sense.
This suggests, and, it's necessary, that Bloch's principle 
be formulated on the minimal, or better canonical
model. 

At this juncture there are some subtleties,
which, in turn, are complicated by the currently
overwhelming tendency in the study of complex
hyperbolicity to treat the canonical
model, if at all, in qua, 
rather than ex qua. For example,
the Green-Griffiths
conjecture, {\it i.e.}  infinito holds iff general type, 
is posed for entire maps to surfaces
of general type, rather than the more conformally
natural hypothesis of entire maps to the Vistoli
covering champ (orbifold), \ref{factd:vistoli}, of their canonical model,
which, after all, is where its K\"ahler-Einstein
metric is defined. As such, although the Bloch
principle has been formulated in \ref{summaryd:crit},
and proved in \ref{cor:bloch}, in a way that
wholly responds to this
in qua tendency, it is most easily summarised
ex qua. More precisely,
if a pair $(S,B)$ with log canonical singularities,
and $K_S+B$ ample is given, then the singularities
of $S$ are either quotient or elliptic Gorenstein.
The latter, $P$, say, are what one finds at cusps
of ball or bi-disc quotients, and in a neighbourhood
of $V\supset P$, the K\"ahler-Einstein and Kobayashi
metrics on $V\bsh P$ are complete and mutually
comparable. 
As such, notation not withstanding,
these should always be treated as belonging to
the boundary, which might be better written
$B\cup P$. Amongst the quotient singularities, $Q$,
however, those in the interior
$S\bsh B$ do not naturally (ex qua), cf.
\ref{summaryd:crit}, belong to the boundary,
so we replace $S$ by a champ de Deligne-Mumford,
$\cS\ra S$ smooth over $Q$, and, otherwise
isomorphic to $S$ on which we have,

\noindent{\bf Main Theorem} (\ref{cor:bloch}) {\it Notations as
above, then the following are equivalent,

\smallskip

\noindent {\em (Infinito)} There is a proper Zariski closed
$Z\sbs \cS$ such that every entire map $f:\bc\ra \cS\bsh \{B\cup P\}$
factors through $Z$.

\smallskip

\noindent {\em (In finito)} A sequence of holomorphic discs
$f_n:\D\ra \cS\bsh\{B\cup P\}$ 
(equivalently, \cite{kobcodim2}, $f_n:\D\ra S\bsh\{B\cup P\cup Q\}$)
which is not arbitrarily
close (compact open sense) to $Z$ has a sub-sequence
converging to a disc with bubbles in $\cS$ and, 
\ref{defn:nobubbles}, without bubbles in the boundary.
}
\smallskip

Which, if one likes things in qua, 
{\it i.e.} one starts, \ref{summaryd:crit}, from a smooth surface
$\tilde{S}$ with simple normal crossing
boundary $\tilde{B}$ of which $(S,B)$ is
the canonical model, then the appropriate
statement, \ref{cor:bloch}, is obtained on replacing the
champ $\cS$ by another, $\cS'$,
according to the rule: $\cS'$ is isomorphic
to $\cS$ around $q\in Q\cap (S\bsh B)$ if 
the pre-image of $q$ is a connected component
of $\tilde{B}$, and isomorphic to $S$ around
$q$ otherwise.
Irrespectively, one should be aware that the above is
not simply theoretical, {\it i.e.} while as noted,
\cite{bloch}, provides another (more difficult) proof of in finito whenever
$\mathrm{s}_2(X,B)>0$, there is no similar proof
otherwise, but there is a proof, \cite{web}, of
infinito under hypothesis such as $13\mathrm{c}_1^2>9c_2$.

To some extent, the above theorem constitutes
the limit of our present 
ambition, but one should always have an
eye towards trying to understand the
geometry of algebraic surfaces as well
as we understand Riemann surfaces or
complete 3-manifolds. Ultimately, one might
hope, \cite{icm}, that this involves some
sort of uniformisation theorem defined via
extremal discs. In the first instance, however,
it involves Kobayashi's
intrinsic metric, which by the above, in the
presence of infinito, is a
complete metric off $Z$, so that, around 
this exceptional set of rational and elliptic
objects, we should aim to quantify its degeneration. 
Such quantification might, therefore,
be considered
a crypto Bloch principle. Inevitably, it
must involve a resolution of singularities
of $(S,Z+B)$, so we defer to
\ref{prop:degen} for the exact statement.
The key feature, however, is what occurs
at a generic point of $Z$, where,
by way of notation, we take $x$ to be a normal
coordinate to $Z$, $y$ longitudinal, and find for every
$\d>0$ a constant $c(\d)>0$, such that a 
lower bound for the Kobayashi
metric close to $Z$ is given by,
$$
\dfrac{c(\d)}{\log^{4} \vert x\vert (\log\vert\log\vert x\vert\vert)^{1+\d}}
\cdot \bigl( dx\otimes d\bar{x} + \vert x\vert^2 dy\otimes d\bar{y}\bigr)$$
which is very close to best possible, but at least
in the normal direction appears to be slightly out by the
scaling factor $(\log\vert x\vert)^{-4}(\log\vert\log\vert x\vert\vert)^{-1-\d}$.

Finally, 
let us say something about the proof.
To the preliminaries already encountered,
is to be
added, in all dimensions, some tautologies, \ref{SS:taut}, about
what happens on taking (logarithmic) derivatives
of  discs in say, $\bp(\Om^1_{X}(\log D)$,
for $D$ a simple normal crossing divisor.
Roughly speaking, this amounts to finding a derived current $T'$
pushing forward to a current $T$
defined by discs on $X$, whose intersection with
the tautological bundle $L$ 
on $\bp(\Om^1_{X}(\log D)$
satisfies $L\cdot T'\leq D\cdot T$,
albeit there are a series of much more precise
statements \ref{fact:taut1}-\ref{fact:taut3},
involving pointed Riemann surfaces,
a complete metric on $X\bsh D$, the
multiplicity free intersection with $D$,
and the distance of the point from $D$. 
Of all the preliminary sections \S 1.1- \S 2.1,
the surprise for the necessity of which
at the beginning of the 21st century notwithstanding,
probably only this tautological one, \S 1.4, can be
considered remotely definitive- other
key  preliminary propositions \ref{prop:closed},
and \ref{prop:nobubbles}
having recourse to the crutch of projectivity.
This said, let us come to the point, once
the (complex) dimension exceeds 1 there is,
contrary to some false and bogus conjectures in circulation,
absolutely no way to get from the tautological
upper bound on the tautological degree implied by 
our closed formula, \ref{fact:taut2}, and 
the length area principle to an identical  bound
for the (log) canonical degree. The sense here, however, of
``dimension 1'' should be very broadly interpreted. 
For example in the case of the Bloch-Cartan
theorem it means discs limiting on the space of
lines in $\bp^n$ viewed as a $n$th order
ODE, or, equivalently sub-variety of the
$n$th jet space, or solutions of a first order
O.D.E.  in the case of
surfaces with $\mathrm{s}_2>0$. In situations such
as these where the discs limit on a 
sufficiently nice O.D.E. 
one can bound the canonical by the
tautological degree, or, more or
less equivalently, the sectional
curvature (understood at the weaker
Nevanlinna level) is at most the
Ricci curvature along the solutions
of the O.D.E., albeit the arguments,
be it \cite{B}, \cite{cart}, or
\cite{bloch} can get rather
involved. 
Thanks to
Duval's theorem, however, and \ref{prop:nobubbles},
the only O.D.E.'s obstructing
ex infinito in finitum have order zero, and
are better referred to as algebraic curves,
and we have,

\noindent{\bf Main Lemma} (\ref{prop:main}) {\it Let $T$ be a closed
positive current on an algebraic surface, $X$,
afforded by (pointed) 
discs in the Nevanlinna sense, \ref{eq:cnev},
such that,

(a) $T$ is supported in a simple normal
crossing divisor $D$.

(b)The origins of the discs do  not
accumulate on $D$.

\noindent then for $T'$ on $\bp(\Om^1_X(\log D))$ the current
associated to the logarithmic derivatives of the discs,
and $L$ the tautological bundle on the same,
$$L\cdot T'\geq (K_X+D)\cdot T$$
}
With the content of the proof being
the interplay between the hypothesis
(b) and adjunction. The above
main theorem/
Bloch principle, and its in qua variants,  
appropriately,
and necessarily (to repeat: in all cases
the principle requires a minimum
of minimality of the model) 
corrected for quotient and elliptic
Gorenstein singularities is, \ref{cor:bloch},  an immediate
corollary of the lemma. The 
lemma
has, however, further utility, and
is, for example, the reduction that the programme, \cite{web},
for resolving the Green-Griffiths
conjecture 
aims
for. 

Thus, while the regrettably long preliminaries of \S 1 are
valid in all dimensions, their application
to the Bloch principle in \S 2 is 
a surface theorem,
which, inter alia, and much after the event, 
permits, via \cite{duval},
Bloch's theorem
to be
considered trivial. One is no closer, however,
to a similar statement in all
dimensions. In
the quasi-projective setting there are, in all
probability,  examples where no model
affords a no bubbling in the boundary lemma
\`a la \ref{prop:nobubbles}, while even in
the projective case, large discs defining
a current supported in a
proper sub-variety isn't
much information when this has dimension more than 1.  
As such, Cartan's thesis 
has a particular 
inductive (in the dimension) structure
which is not
true in higher dimension, and which causes  
it to remain sui generis.

\newpage

\section{Nevanlinna Theory}\label{S:nev}

\subsection{Area and double integration}\label{SS:area}

Throughout this section $(\Sigma, p)$ will be a pointed
Riemann surface with boundary. The boundary will always
be understood to be regular, {\it i.e.} it admits
a Green's function, or better, for the present, a psh. function 
$g:\Sigma\ra [-\infty, 0]$ vanishing on the boundary,
and finite on $\Sigma\bsh \{p\}$ satisfying:
\begin{equation}\label{eq:green}
\ddc g=\d_p
\end{equation}
and we denote by $z$ a function around $p$ such that,
\begin{equation}\label{eq:z}
g=\log\vert z\vert^2 + O(\vert z\vert)
\end{equation}
For $r\in (-\infty, 0]$, let $\Si_r$ be the open set of points
where $g<r$. This is relatively compact for $r<0$, and we may
integrate, say:
\begin{equation}\label{eq:int}
\int_{\Si_r}:A^{1,1}(\Si)\ra\bc:\om \mpo \int_{\Si_r}\om
\end{equation}
The choice of the point $p$ permits Nevanlinna's
variant on this,
\begin{equation}\label{eq:nev}
\nint_{\Si_r}:A^{1,1}(\Si)\ra\bc:\om \mpo \frac{1}{2}\cdot \int_{-\infty}^r dt \int_{\Si_t}\om
\end{equation}
And of course we extend these definitions whether
to $r=0$, or $\om$ just a signed measure, or both,
provided they continue to have sense as absolute
integrals. 

The exact relation of Nevanlinna's definition with
geometry is unclear- after all it involves a choice.
It is, therefore, perhaps best to treat it as a
technical tool whose utility comes from:

\begin{fact}\label{fact:cart} {\em Let $\bar D$ be a metricised Cartier divisor on $\Si$, with 
$\un_D \in \G ({\mathcal O}_{\Si} (D))$ the tautological section, then,

\begin{eqnarray*}
{\int\!\!\!\!\!\!\nabla}_{\Si_r} 
\mathrm{c}_1 (\bar D) & :=  &\sum_{-\infty < g(z)  < r} \mathrm{ord}_z (D
) \cdot \frac{(r-g(z))}{2}  + \mathrm{ord}_0 (D) \frac{r}{2}\\ 
&& - \int_{\partial \Si_r} \log \Vert \un_D \Vert \cdot \dc g + \lim_{z 
\ra p} \log \dfrac{\Vert \un_D \Vert}{\,\,\vert z \vert^{\mathrm{ord}_0 (D)}} 
\end{eqnarray*}
}
\end{fact}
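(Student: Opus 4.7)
The plan is to reduce the formula to the distributional Green identity
$$\int_{\Si_r}(r-g)\,\ddc\psi+\psi(p)=\int_{\partial\Si_r}\psi\,\dc g,\qquad(\dagger)$$
applied to the regularised function $\psi:=\log\Vert\un_D\Vert-\tfrac{\mathrm{ord}_0(D)}{2}\,g$. By the asymptotics \eqref{eq:z} and $\Vert\un_D\Vert\sim c\,|z|^{\mathrm{ord}_0(D)}$ near $p$, $\psi$ extends continuously to $p$ with $\psi(p)=\lim_{z\to p}\log\bigl(\Vert\un_D\Vert/|z|^{\mathrm{ord}_0(D)}\bigr)$, and Poincar\'e--Lelong together with \eqref{eq:green} yields
$$\ddc\psi=\tfrac12\!\!\sum_{z\ne p}\!\mathrm{ord}_z(D)\,\delta_z\;-\;\tfrac12\,\mathrm{c}_1(\bar D),$$
which carries no atom at $p$.

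To establish $(\dagger)$, apply the classical smooth Green identity to the pair $((r-g),\,\psi)$ on the excised domain $U_\epsilon:=\Si_r\setminus\bigcup_i B_\epsilon(z_i)$, where $\{z_i\}$ is the finite set $|D|\cap\Si_r\setminus\{p\}$. Using $\ddc(r-g)=-\delta_p$, $\dc(r-g)=-\dc g$ and $(r-g)|_{\partial\Si_r}=0$, and letting $\epsilon\to0$, the small-circle integrals $\int_{\partial B_\epsilon(z_i)}\psi\,\dc g=O(\epsilon\log\epsilon)$ vanish while $\int_{\partial B_\epsilon(z_i)}(r-g)\,\dc\psi\to(r-g(z_i))\cdot\mathrm{ord}_{z_i}(D)/2$, precisely absorbing the atomic contribution of $\ddc\psi$ at $z_i$ once the interior is reinstated.

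To finish, evaluate both sides of $(\dagger)$. By Fubini on \eqref{eq:nev}, valid since $z\in\Si_t\Leftrightarrow g(z)<t$, one has $\nint_{\Si_r}\omega=\tfrac12\int_{\Si_r}(r-g)\,\omega$ for any Radon measure $\omega$, whence the measure $\ddc\psi$ above gives
$$\int_{\Si_r}(r-g)\,\ddc\psi=\tfrac12\!\!\sum_{z\ne p,\,g(z)<r}\!\mathrm{ord}_z(D)(r-g(z))\;-\;\nint_{\Si_r}\mathrm{c}_1(\bar D).$$
On the boundary, $g\equiv r$ on $\partial\Si_r$ and $\int_{\partial\Si_r}\dc g=\int_{\Si_r}\ddc g=1$, so $\int_{\partial\Si_r}\psi\,\dc g=\int_{\partial\Si_r}\log\Vert\un_D\Vert\,\dc g-\tfrac{\mathrm{ord}_0(D)\,r}{2}$. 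Substituting into $(\dagger)$ and solving for $\nint_{\Si_r}\mathrm{c}_1(\bar D)$ yields the stated identity, with the limit term appearing as $\psi(p)$.

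The only non-routine ingredient is the limiting procedure in Green's formula; it is controlled once one records $\int_{|w|=\epsilon}\dc\log|w|=\tfrac12$ and notes that $\dc g$ is bounded near each $z_i\ne p$. A minor subtlety is the well-definedness of $\int_{\partial\Si_r}\log\Vert\un_D\Vert\,\dc g$ when $|D|\cap\partial\Si_r\ne\emptyset$; this is avoided for $r$ outside the discrete critical set of $g|_{|D|}$, and both sides extend continuously in $r$.
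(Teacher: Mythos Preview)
Your proof is correct and is precisely the ``exercise in integrating by parts'' that the paper alludes to; the paper's own proof consists solely of the remark that $\mathrm{c}_1(\bar D)=-\ddc\log\Vert\un_D\Vert^2$ almost everywhere, leaving the computation to the reader. Your regularisation $\psi=\log\Vert\un_D\Vert-\tfrac{\mathrm{ord}_0(D)}{2}g$, the Green identity $(\dagger)$, and the Fubini identity $\nint_{\Si_r}\omega=\tfrac12\int_{\Si_r}(r-g)\,\omega$ are exactly the steps one expects, and your bookkeeping of the atomic terms and the limit at $p$ is accurate.
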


\begin{proof} 
We're always working in a complex setting, so
by definition: $\mathrm{c}_1 (\bar D)=-\ddc \log \Vert \un_D \Vert^2$,
almost everywhere, after which we have an exercise in integrating by parts.
\end{proof}

This leads to the principle application of
\ref{eq:nev}, which doesn't have a parallel
under \ref{eq:int}, to wit:

\begin{ex}\label{ex:compact}
{\em Let $X$ be a compact complex space, or indeed 
an analytic champ de Deligne-Mumford, and
$\overline D$ a metricised divisor on it,
then for any map $f:\Si\ra X$ from any
(pointed) Riemann-surface such that $f(p)\notin S$, and any $r$,
$$
\nint_{\Si_r}
f^*\mathrm{c}_1 (\bar D) \geq 
 \log \Vert \un_D \Vert(p)
- \sup_X \log {\Vert \un_D \Vert} 
$$
The supremum in question 
depends only on $X$ and $D$, we may as well normalise
so that it's zero, so the only obstruction to
a positive intersection in the Nevanlinna sense
of $f$ with $D$ is the distance of $f(p)$ to $D$.
}
\end{ex}
Here is another example of the utility of \ref{fact:cart},

\begin{exd}\label{exd:euler}
{\em Suppose that $\Si$ is hyperbolic, and let
$\om$ be the Poincar\'e metric. The form $\pa g$
is meromorphic, and the (signed) sum of its zeroes
and poles in any $\Si_r$ is the Euler characteristic
$\chi(r)$ of the same- just consider the compact
double obtained by Schwarz reflection in the boundary.
The Nevanlinna variant of the Euler caharacteristic is given by,
$$
E_\Si(r):=\frac{1}{2}\cdot \int_{-\infty}^0 \chi(\Si_t\bsh p) \, dt \,+\, \frac{r}{2}
$$
and the definition of constant curvature -1 implies,
for $\pa$ a field dual to $\pa g$, and $z$ as in \ref{eq:z},
$$
\nint_{\Si_r} \om + E_\Si(r) 
+ \log\Vert\dz\Vert_\om(p) =
\int_{\pa\Si_r} \log \Vert\pa\Vert_\om\, 
\dc g \leq  \log\bigl[(\mathrm{length}_{\om})(\pa\Si_r)
\bigr]
$$ 
which, although there are more interesting length-area
isoperimetric inequalities using \ref{eq:int} rather
than \ref{eq:nev}, their proofs are much more difficult.
}
\end{exd}

\subsection{Closedness and positivity}\label{SS:closed}

Let us consider in more detail the situation presented
in \ref{ex:compact}, with $\om$ a positive
$(1,1)$ form on $X$, and $f_n:\Si_n\ra X$ maps
from (pointed) Riemann surfaces. The alternatives
\ref{eq:int} \& \ref{eq:nev} as to how we
integrate over $\Si_n$ lead to competing definitions
for bounded currents on $X$, {\it viz:}
\begin{equation}\label{eq:cint}
A_n (r) : A^{1,1} (X) \ra {\mathbb C} : \tau \mpo 
\left( \int_{\Si_{n,r}} \, f_n^* \, \om \right)^{-1} {\int}_{\Si_{n,r}} \, f_n^* \, \tau \,
\end{equation}
and its Nevanlinna counterpart:
\begin{equation}\label{eq:cnev}
T_n (r) : A^{1,1} (X) \ra {\mathbb C} : \tau \mpo 
\left( \nint_{\Si_{n,r}} \, f_n^* \, \om \right)^{-1} {\nint}_{\Si_{n,r}} \, f_n^* \, \tau \,
\end{equation}
Where in either case we will implicitly suppose that
there is $r\in (-\infty,0)$ such that as $n\ra\infty$:
\begin{equation}\label{eq:unbounded}
 \int_{\Si_{n,r}} \, f_n^* \, \om\, \ra \infty, \,\,\,
\mathrm{respectively,}\,\,\,
 \nint_{\Si_{n,r}} \, f_n^* \, \om\, \ra \infty
\end{equation}
and one should bear in mind that we are not asserting
any a priori relation between the respective conditions.
Under the former of these conditions, and up to moving
$r$ a little, the length area principle implies, without
much work, that a subsequence of the $A_n(r)$ converges
to a closed positive current. The corresponding statement
for $T_n(r)$ is more difficult, and we'll limit our
discussion to the case where all the $f_n$ are maps
from the unit disc, $\D$, pointed in the origin. 
In such a situation it is convenient to change the
notation according to,
\begin{warning}\label{warning:notation}
{\em Should we be discussing uniquely sequences of discs
then $\D (r)$ will be the (pointed) disc of radius $r$.
The solution of \ref{eq:green} is, of course,
$\log\vert z\vert^2$, so for $\tau$ $(1,1)$ on the disc: 
\begin{equation}\label{eq:notation}
\nint_{\D_{r}} \, \tau\, =\, \int_0^r\, \frac{dt}{t}\int_{\D (t)}\, \tau
\end{equation}
}
\end{warning}
We will further limit our attention to $X$ projective,
so in the first instance $\bp^n$ for some $n$, and we
make,
\begin{choiced}\label{defn:fubini} A Fubini-Study metric
on $\bp^n$ depends on a choice of basis. Make such a
choice and take the $\om$ occurring whether in \ref{eq:cint}
or \ref{eq:cnev} to be the resulting Fubini-Study form.
\end{choiced}
Which we will suppose in,
\begin{lem}\label{lem:origin} Let $x\in\bp^n$, then there
is an open (archimedean) neighbourhood $U\ni x$, and
constants $c_1$, $c_2$, and $N>n$, depending only on $U$ and the
choice of  \ref{defn:fubini} such that for every
$f : \D \ra \bp^n$, and $0 < r < 1$. 
$$
R\Vert df(z) \Vert_\om \leq c_1
{\int\!\!\!\!\!\!\nabla}_{\D (R)} \, f^* \, \om
+ c_2
$$
for $2 |z| < R$, where $R = \mathrm{min} \{ \frac{r}{N} \mathrm{exp} (-
{\int\!\!\!\!\!\!\nabla}_{\D (r)} \, f^* \, \om) 
, r \}$, and the implied norm on the disc is just
the Euclidean one.
\end{lem}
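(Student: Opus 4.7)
\emph{Proof plan.} The plan is to reduce to a local calculation in the standard affine cover $U_i^\ast := \{X_i \neq 0\}$ of $\bp^n$ adapted to the Fubini--Study basis of \ref{defn:fubini}. Pick relatively compact sub-neighbourhoods $U_i \Subset U_i^\ast$ covering $\bp^n$, and denote $d := \min_i \mathrm{dist}(U_i, \partial U_i^\ast) > 0$, $M := \max_{i,j} \sup_{U_i^\ast}|X_j/X_i|$. On the closure of each $U_i^\ast$ the Fubini--Study form is uniformly equivalent to the Euclidean form in the affine coordinates, and all of the forthcoming constants will depend only on $d$, $M$, and these equivalence constants via a worst-case choice over the finite cover; $N$ will be fixed at the end, with $N>n$ suitable provided $N$ dominates a few universal thresholds coming from the two cases below.

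Picking $i$ with $f(z) \in U_i$, write $U, U^\ast$ for $U_i, U_i^\ast$ and split according to whether the entire disc $\D(z, R/N)$ is mapped inside $U^\ast$. In the \emph{benign} case, $f(\D(z, R/N)) \subseteq U^\ast$, each affine coordinate $f_j := (X_j/X_i)\circ f$ is a holomorphic function on $\D(z, R/N)$ with sup-norm at most $M$; Cauchy's integral formula gives $|f_j'(z)| \leq MN/R$, whence $R\|df(z)\|_\om$ is bounded by a constant that is absorbed into $c_2$.

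In the \emph{boundary} case, $f(\D(z, R/N)) \not\subseteq U^\ast$, set $\rho^\ast := \sup\{\rho : f(\D(z,\rho)) \subseteq U^\ast\} \leq R/N$, so that $f(\partial \D(z,\rho^\ast))$ reaches $\partial U^\ast$. Two ingredients feed the estimate: first, the monotonicity formula for holomorphic curves, applied to the translated map $w \mapsto f(w) - f(z)$ in the chart, which reaches distance $\geq d$ from its base point, yields
\[
\int_{\D(z,\rho^\ast)} f^\ast \om \; \geq \; c_0 d^2
\]
with $c_0>0$ universal; second, the Nevanlinna--Ahlfors comparison
\[
\nint_{\D(R)} f^\ast \om \; \geq \; \log\tfrac{R}{|z|+\rho^\ast} \cdot \int_{\D(z,\rho^\ast)} f^\ast \om,
\]
valid because $\D(z,\rho^\ast) \subseteq \D(|z|+\rho^\ast) \subseteq \D(R)$ (the last inclusion using $|z| < R/2$ and $\rho^\ast \leq R/N$ with $N$ large enough). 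Combining both with the pointwise Cauchy estimate $\|df(z)\|_\om \leq C M/\rho^\ast$, coming from a direct application of Cauchy on $\D(z, \rho^\ast/2) \subseteq f^{-1}(U^\ast)$, finishes the argument.

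\emph{Main obstacle.} The delicate step is this last combination, where the \emph{linear} form of the conclusion must be matched to estimates that only naturally produce a \emph{logarithmic} lower bound on $\rho^\ast$ in terms of $\nint_{\D(R)} f^\ast\om$. The resolution uses the specific exponential shape $R = (r/N)\exp(-\nint_{\D(r)} f^\ast \om)$ of the radius, which is engineered precisely so that the hypothesis that $\nint_{\D(r)} f^\ast \om$ is large already forces $R$ to be small enough that the ratio $R/\rho^\ast$ in the Cauchy bound gets absorbed against a uniformly positive lower bound on $\log(R/(|z|+\rho^\ast))$; the restriction $N > n$ is what ensures this log-factor is bounded below independently of $f$ and $z$, and so pins down the constants $c_1, c_2$.
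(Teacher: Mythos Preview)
Your boundary case has a genuine gap, and the hand-wavy ``resolution'' at the end does not close it. From the monotonicity formula plus the Nevanlinna comparison you only extract
\[
\nint_{\D(R)} f^*\om \;\geq\; c_0 d^2 \,\log\frac{R}{|z|+\rho^*},
\]
and combining this with Cauchy's $\Vert df(z)\Vert_\om \leq CM/\rho^*$ yields at best $R/\rho^* \leq \exp\bigl(C'\nint_{\D(R)} f^*\om\bigr)$, i.e.\ an \emph{exponential} bound on $R\Vert df(z)\Vert_\om$, not the linear one claimed. Nothing in your argument provides a lower bound on $\rho^*$ beyond this logarithmic inversion: $\rho^*$ can be arbitrarily small compared to $R$ regardless of how small $R$ already is, so the phrase ``$R$ small enough that $R/\rho^*$ gets absorbed'' has no content. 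A related symptom is that your argument never invokes the hypothesis $f(0)\in U$, which is precisely what drives the choice of $N$ and the shape of $R$.

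The paper's proof is structurally different and avoids your dichotomy altogether. The exponential definition of $R$ is not a device for comparing two area scales; it comes from the first main theorem applied to each coordinate hyperplane $H_i$: since $f(0)\in U$ is a definite distance $1/N$ from every $H_i$, any zero of $f^*X_i$ in $\D(r)$ must satisfy $|z|\geq (r/N)\exp(-T_f(r))$. Thus on $\D(R)$ the affine coordinates $g_i=f_i/f_0$ are \emph{units}, so $\log|g_i|$ is harmonic and the Poisson formula gives $g_i'/g_i$ as a boundary integral of $\log|g_i|$ against an explicit kernel. Bounding $\int_{|w|=R}\bigl|\log|g_i|\bigr|$ by $T_f(R)$ (for $\log^+|g_i|$ directly, for $\log^+|g_i|^{-1}$ via the inversion $\bg_m^n\to\bg_m^n$ extended to a modification of $\bp^n$) then yields $|g_i'/g_i|\leq C R^{-1}T_f(R)$ for $|z|<R/2$, which is exactly the linear estimate. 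The point you are missing is that the role of $R$ is to \emph{exclude} your boundary case on $\D(R)$, not to quantify it.
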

\begin{proof}  Everything is homogeneous under the unitary group so we may as well 
say $x = [1,\ldots , 1]$, in the basis $X_i$ generating $\cO_{\bp^n} (1)$
and affording $\om$.  A disc is Stein, so fixing a trivialisation of $f^* 
\cO_{\bp^n} (1)$ allows us to write $f = [f_0 , \ldots , f_n]$ in these
coordinates for $f_i$ functions on the disc without common factor.
As such if $H_i$ is the $i^{\rm 
th}$-coordinate hyperplane, we may suppose $U\cap H_i=\emptyset$, $\forall i$, so:
$$
{\int\!\!\!\!\!\!\nabla}_{\D (r_0)}  f^*  \omega =
 - \int_{| z | = r} \log 
\frac{\Vert f^* X_i \Vert}{ \Vert f^* X_i \Vert  (0)} \frac{d\theta}{2\pi}+ 
\sum_{0 < 
\vert z \vert < r} \hbox{ord}_z (f^* H_i) \log \frac{r}{\vert z \vert} 
$$

Now for convenience put, $T_f (r)$ to be the left hand side of the
above, and take $U$ sufficiently small so that the infimum over
$u\in U$ and $i$ of $\Vert X_i\Vert(u)$ is $N^{-1}> 0$. Whence 
if $f(z) \in H_i$ with $\vert z \vert < r$, then:
$$
\vert z \vert \geq \frac{r}{N} \exp (-T_f (r)) \, .
$$
So put $R = \mathrm{min} \left\{ r , \frac{r}{N} \exp (-T_f (r)) \right\}$ then for 
$\vert z \vert < R$, $f_i (z) \neq 0$. In particular if we identify affine 
$n$-space $\ba^n$ with $\bp ^n \setminus  H_0$ then $\D (R)$ maps to $\ba^n$ under $f$ 
with standard coordinates $g_i = f_i / f_0$ and each $g_i$ is a unit on $\D (R)$.

Consequently Jensen's formula gives,
$$
\frac{g'_i (z)}{g_i (z)} = \int_0^{2\pi} \log \vert g_i  (Re^{i\theta}) 
\vert \, \frac{2 
Re^{i\theta}}{(Re^{i\theta} - z)^2} \frac{d\theta}{2\pi} 
$$
We require to control the modulus of the logarithm in the integrand, so to this 
end observe that inversion on $\bg _m^n = \{ x_1 \ldots x_n \neq 0 \} \subset 
\ba^n$ is 
well defined on a modification $\rho : W \ra \bp^n$ which is an isomorphism 
outside crossings of hyperplanes. In particular if $i : W \ra \bp^n$ is the 
extension of inversion then,
$$
i^* \cO_{\bp^n} (1) = \rho^* \cO_{\bp^n} (n) - E
$$
for $E$ an effective divisor on $W$ contracted by $\rho$. Applying this to
$if\vert_{\D (R)}$ gives in the notation of \ref{ex:compact}:
$$
\frac{1}{2} \int_{\vert z \vert = R} \log ( 1 + \sum_{i=1}^n 
\vert 
g_i \vert^{-2} ) \frac{d\theta}{2\pi} \leq n  T_f (R) -\log f^*\Vert\un_E\Vert(0)
+\frac{1}{2}\log ( 1 + \sum_{i=1}^n \vert g_i \vert^{-2} )(0)
$$
The definition of $N$ implies that the final term at the origin
is bounded by $\log(1+nN^2)^{1/2}$, which in any case is just
some constant $C(U)$ depending on $U$, and such a constant
also bounds the exceptional divisor as soon as $\rho(E)$ is
not in the closure of $U$.
A similar bound for 
$\log \left( 1 + \sum \vert g_i \vert^2 \right)$ is even easier, and so we 
obtain, for $\vert z \vert < R/2$,
$$
\left\vert \frac{g'_i}{g_i} \right\vert \leq 8 R^{-1} \left( (n+1) \, T_f (R) + C(U)\right)
$$
with $C(U)$ as per its definition. Now consider the Fubini-Study metric 
for the particular choice of coordinates, with $\om$ the corresponding $(1,1)$ 
form then,
$$
\om \leq 2 \sum_i 
\frac{dd^c\vert x_i\vert^2}{(1 + \vert x_i 
\vert^2)}
$$
so that plugging in our estimates for $g'_i$ gives the lemma.
\end{proof}
  
Let us apply this to establish the non-trivial
part of,

\begin{prop}\label{prop:closed} Let $f_n:\D\ra X$ be a
sequence of maps from the disc, and
suppose the former (area) alternative in \ref{eq:unbounded},
holds at some $r_0\in (0,1)$ with sufficiently rapid
growth, cf. \ref{claim:lang1}, in $n$ (so, always possible
after passing to a sub-sequence) then for $r \geq r_0$ outside a 
set of finite hyperbolic measure (i.e. $(1-r^2)^{-1} \ dr$) 
in $(0,1)$ any weak accumulation point of the $A_n (r)$,
\ref{eq:cint},
defines a closed positive current. Less obviously,
suppose $X$ is projective, the latter (Nevanlinna) alternative of
\ref{eq:unbounded} holds, and that the $f_n(0)$ converge,
then the same conclusion holds for the $T_n(r)$ of \ref{eq:cnev}.
\end{prop}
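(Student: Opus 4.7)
The plan is to verify positivity and closedness, the former being automatic since each $T_n(r)$ is positive and of unit mass, and the positive cone is weakly closed. For closedness, test against a smooth $1$-form $\alpha$ on $X$; Fubini and Stokes applied inside each $\D(t)$ give
\begin{equation*}
\nint_{\D_{r}} f_n^* d\alpha \;=\; \int_0^r \frac{dt}{t}\int_{\partial \D(t)} f_n^* \alpha,
\end{equation*}
the integrand being tame at $t=0$ since $f_n$ is smooth there, whence $\int_{\partial \D(t)} f_n^* \alpha = O(t)$ as $t\to 0$. Writing $L_n(t)$ for the length of $f_n(\partial \D(t))$, $A_n(t)=\int_{\D(t)} f_n^*\omega$, and $V_n(r) = \nint_{\D_{r}} f_n^*\omega$, the task reduces to showing $\int_0^r L_n(t)\,dt/t = o(V_n(r))$ for $r$ outside a set of finite hyperbolic measure.

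In the Ahlfors variant this reduces to the pointwise estimate $L_n(r) = o(A_n(r))$: Cauchy--Schwarz gives $L_n(r)^2 \leq 2\pi r A_n'(r)$; Borel's growth lemma gives $A_n'(r) \leq A_n(r)^{1+\epsilon}$ outside a set of finite hyperbolic measure in $(0,1)$; and the hypothesis together with monotonicity of $A_n$ forces $A_n(r)\to\infty$ for $r \geq r_0$. These combine to give $L_n(r)/A_n(r) \to 0$ for good $r$, hence closedness of any weak limit.

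The Nevanlinna case is more delicate precisely because the weight $dt/t$ is singular at the puncture and one cannot simply reduce to a pointwise estimate in $r$. The strategy is to split the integration $(0, r]$ at the threshold $R_n := rN^{-1}\exp(-V_n(r))$ of Lemma \ref{lem:origin}, applied after embedding $X \hookrightarrow \bp^N$. The hypothesis that $f_n(0)$ converges to some $x \in X$ is essential here: it permits fixing a single neighbourhood $U \ni x$ in $\bp^N$ and therefore uniform constants $c_1, c_2, N$ in the conclusion of the lemma. On $|z| < R_n/2$ one thereby obtains the pointwise bound $\Vert df_n(z)\Vert_\omega \leq R_n^{-1}(c_1 V_n(r) + c_2)$, which controls $\int_0^{R_n/2} L_n(t)\,dt/t$ by elementary estimation.

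On the outer annulus $[R_n/2, r]$ one returns to length--area $L_n(t)^2 \leq 2\pi t A_n'(t)$, applies Cauchy--Schwarz over this truncated domain, and appeals to Borel's growth lemma applied now to $V_n$ itself. The main obstacle, and the reason for the sharp exponential threshold in Lemma \ref{lem:origin}, is the balancing of the two scales: the inner disk contributes $O(V_n(r))$ from the pointwise bound while the outer annulus contributes a Cauchy--Schwarz factor $\sqrt{\log(r/R_n)} \sim \sqrt{V_n(r)}$, so only after exploiting the logarithmic gain of Borel does the total sit at $o(V_n(r))$ for $r$ outside a set of finite hyperbolic measure in $(r_0, 1)$. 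This yields closedness of any weak limit extracted along such a subsequence of $r$'s.
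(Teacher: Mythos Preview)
Your overall architecture is correct---Stokes, a split near the origin governed by Lemma~\ref{lem:origin}, Cauchy--Schwarz on the outer annulus, Borel---but the execution as written does not close. There are two linked gaps.

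First, the inner contribution. You split at $R_n/2$ and obtain, as you say, an inner term of size $O(V_n(r))$. But the goal is to show that $\int_0^r L_n(t)\,dt/t$ is $o(V_n(r))$, so an $O(V_n(r))$ inner term already kills the argument before you even look at the annulus. The paper splits not at $R_n/2$ but at a much smaller $\e\ll R/2$, specifically $\e\sim\exp(-(C+1)T_{f_n}(r_0))$ for a suitable constant $C$ coming from the lemma; the pointwise bound on $\Vert df_n\Vert$ then makes the inner integral $\sim \e\cdot T_{f_n}(r_0)\exp(CT_{f_n}(r_0))=O(1)$, not $O(V_n(r))$. The freedom to take $\e$ this small is exactly what Lemma~\ref{lem:origin} buys, and it is essential.

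Second, the outer Cauchy--Schwarz. Your factor $\sqrt{\log(r/R_n)}\sim\sqrt{V_n(r)}$ arises from pairing $L_n(t)/t$ against the weight $dt/t$; the other factor is then $\sqrt{A_n(r)}$, and Borel on $V_n$ gives at best $A_n(r)=rV_n'(r)\leq V_n(r)^{1+\d}$, so the product is $V_n(r)^{1+\d/2}$---too large, not $o(V_n(r))$. The paper's remedy is a weighted Cauchy--Schwarz with weight $t\vert\log t\vert$: writing $J_n(t)=\int_{\vert z\vert=t}\vert df_n\vert^2_\om\,d\theta/2\pi$, one has
\[
\int_\e^r J_n(t)^{1/2}\,dt\;\leq\; I_n(r)^{1/2}\Bigl(\log\tfrac{\vert\log\e\vert}{\vert\log r\vert}\Bigr)^{1/2},
\qquad
I_n(r):=\int_0^r J_n(t)\,t\,\vert\log t\vert\,dt,
\]
and a short computation gives $I_n(r)=V_n(r)+\vert\log r\vert\,r\,V_n'(r)$. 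The second Cauchy--Schwarz factor is now doubly logarithmic in $\e$, hence $\sim\sqrt{\log V_n(r)}$ after the choice of $\e$ above; and $I_n(r)\ll V_n(r)^{1+\d}$ by Borel. Altogether the outer is $V_n(r)^{(1+\d)/2}(\log V_n(r))^{1/2}=o(V_n(r))$. The passage from your $\sqrt{V_n}$ to the paper's $\sqrt{\log V_n}$ is not a cosmetic improvement: it is the difference between the argument working and not.
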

\begin{proof}
As we've said, the assertion for the $A_n(r)$
is the length-area principle, and is trivial,
{\it i.e.} for $\alpha$ a smooth $1$ form on $X$,
$$
\vert\int_{\D (r)} f_n^*d\a\vert= \vert\int_{\pa\D (r)} f_n^*\a\vert
\leq r \Vert \a \Vert \int_{\pa\D (r)} \vert df_n \vert_{\om} \frac{d\theta}{2\pi}  
\leq \Vert \a \Vert \left(r\frac{d}{dr}\int_{\D (r)} f_n^*\om\right)^{1/2}
$$
for a suitable determination of the sup-norm,
$\a\mpo \Vert \a \Vert$ independent of $f_n$,
$\om$ as per \ref{defn:fubini}, and the rest follows from:

\begin{claim}\label{claim:lang1} {\em Let $\d > 0$ and $S_n$ a sequence of increasing 
differentiable functions on $[0,1)$ with $S_n (r_0) \geq n^{2/\d}$ for some $r_0 
\in (0,1)$ then the set,
$$
\{ 1 > r \geq r_0 : S'_n (r) \geq S_n^{1+ \d} (r) \, (1-r^2)^{-1} , \ \hbox{for 
some} \ n \}
$$
has finite measure in the induced hyperbolic measure on the unit interval.}
\end{claim}
\noindent{\it Proof.} The set in question has measure bounded by,
$$
\sum_n \int_{r_0}^1 \frac{S'_n (r)}{S_n^{1+\d} (r)} \, dr \leq \sum_n \int_{S_n 
(r_0)}^{\ify} \frac{dx}{x^{1+\d}} = \frac{1}{\d} \sum_n 
\frac{1}{S_n (r_0)} \, \d = 
\frac{\pi^2}{6\d} < \ify\,\,\,\,\,\, \Box
$$
As such let's concentrate on getting a similar bound
in the Nevanlinna case, which will permit us to apply
\ref{claim:lang1}.  Again we begin with Stokes,
$$
\vert {\int\!\!\!\!\!\!\nabla}_{\D (r)} f_{n}^{*}d\a \vert = 
 \left\vert \int_{\D (r)} \, f_n^*\a 
d \log \vert z \vert \right\vert \leq \Vert \a \Vert\int_{\D (r)} \vert
 df_n \vert_{\om} \frac{dtd\theta}{2\pi}
$$
but now there is a problem close to the origin which requires care,
and we put:
$$J_n (s) = \int_{\vert z \vert = s} \vert df_n \vert_{\om}^2 \frac
{d\theta}{2\pi},\,\,\,\,\,
\hbox{so that,}\,\,\,\,\,
{\int\!\!\!\!\!\!\nabla}_{\D (r)}  f_n^*  \om   =  
\int_0^r J_n (s)  s \log \frac{r}{s}ds
$$
which we re-arrange as:
$$
I_n (r) := \int_0^r J_n (t)  t  \vert \log t \vert  dt = 
{\int\!\!\!\!\!\!\nabla}_{\D (r)} f_n^*  \om + \vert \log r \vert r \frac{d}{dr}  
{\int\!\!\!\!\!\!\nabla}_{\D (r)}f_n^* \om 
$$
Consider for a suitable $\e > 0$, depending on $n$, to be chosen,
\begin{eqnarray*}
\int_{r \geq \vert z \vert \geq \e} \vert df_n \vert_{\om} \frac{dtd\theta}
{2\pi} &\leq & \int_{\e}^r J_n (t)^{1/2} \, dt \\
&\leq & \left( \int_{\e}^r t \, \vert \log t \vert \, J_n (t) \ dt \right)^{1/2} 
\left( \int_{\e}^r \frac{dt}{t\vert \log t \vert} \right)^{1/2} \\
&\leq & I_n^{1/2} (r) \log^{1/2} \frac{\vert \log \e \vert}{\vert \log r \vert} \end{eqnarray*}
At which point take $r\geq r_0$, $n$ sufficiently large, and
use the $\ll$ notation for inequality up to a constant that depends on 
what we can fix, i.e. $r_0$, $X$, $\om$ etc. but definitely not $f_n$
or $r$. Then by \ref{lem:origin}, and in the notation of
the same, for
any $\log\e \ll -T_{f_n}(r_0)$ there is a constant, $C$,
independent of $r$ and $n$ such that:
\begin{eqnarray*}
\int_{\vert z \vert \leq r} \vert df_n \vert_{\om} \frac {dtd\theta}{2\pi} &\ll & 
I_n^{1/2} (r) \log^{1/2} \frac{\vert \log \e \vert}{\vert \log r \vert} + 
\e T_{f_n}(r_0)\, {\exp}(CT_{f_n}(r_0))\\
&\ll & I_n^{1/2} (r) \log^{1/2} \left( \frac{T_{f_n} (r)}{\vert \log r \vert} \right)
\end{eqnarray*}
after a suitably choice of $\e$ depending on $n$.
It only remains to show that $I_n (r)$ is not 
that much bigger than $T_{f_n} (r)$, which is
again \ref{claim:lang1}.
\end{proof}
 
\noindent This is the main lemma of this section, on which we'll
need some variants, {\it viz:}

\begin{sch}\label{sch:systems}
{\em 
Starting from a given projective variety $X$, we
will have need to work with various auxiliary (projective)
varieties $\pi_i:X_i\ra X$, $i\in I$ countable, or, indeed champs
de Deligne-Mumford with projective moduli, albeit the latter is 
only a technical convenience that one may eschew.
Consider attempting to lift the currents, $T_n(r)$ of \ref{eq:cnev}
to currents $T_{in}(r)$ on $X_i$. The 
maps $\pi_i$ will be proper, so
we have commutative diagrams:
$$
\begin{CD}
C_{in} @>{f_{in}}>> X_i \\
@V{p_{in}}VV @V{\pi_i}VV  \\
\Delta @>{f_n}>> X
\end{CD}
$$
with $C_{in}$ smooth 
(possibly  a champ if $X_i$ is)
and $p_{in}$ proper and finite. In order
to maintain functoriality, we don't exhaust the $C_{in}$
by their Green's function as in \ref{eq:nev}, but by
$p_{in}$. Thus for $r\in (0,1)$, $C_{in}(r)=p^{-1}(\D (r))$,
and,
\begin{align}\label{eq:lift}
 \nint_{C_{in}(r)}& :A^{1,1}(C_{in})\ra\bc:\tau \mpo \dfrac{1}{\mathrm{deg}(p_{in})}\cdot
\int_0^1\frac{dt}{t}\int_{C_{in}(r)} \, \tau \\
 T_{in} (r) &: A^{1,1} (X_i) \ra {\mathbb C} : \tau \mpo 
\left( \nint_{\D (r)} \, f_n^* \, \om \right)^{-1} {\nint}_{C_{in}(r)} \, f_{in}^* \, \tau \,
\end{align}
Now, by construction, $(\pi_i)_* (T_{in} (r))= T_n(r)$, but there
are the following issues to take care of:

\noindent{\bf (a)} Whether in \ref{fact:cart}, \ref{ex:compact}, or similar,
the point $p$ should be replaced with $p_{in}^{-1}(0)$, so control
at the origin for all $i$ and $n$ will still be achievable by control
of the whereabouts of the $f_n(0)$.

\noindent{\bf (b)} The $T_{in}(r)$ need not be bounded. Actually this
can only happen if $\pi_i$ is not finite. Indeed, 
$\pi_i$
has a Stein factorisation, $X_i\build\ra_{}^{\tau_i} Y_i \build\ra_{}^{\sigma_i} X$,
or better $X_i\build\ra_{}^{\rho_i}\vert X_i\vert 
\build\ra_{}^{\tau_i} Y_i \build\ra_{}^{\sigma_i} X$
the (finite) map to moduli followed by Stein factorisation
if $X_i$ is a champ, while for any finite map $Y\ra Z$ the difference
between a metric on $Z$ and $Y$ is $\ddc(\mathrm{bounded})$,
so, for example, when $\pi_i$ is finite $T_{in}(r)$ is
bounded by an exercise in integration by parts \`a la \ref{fact:cart}.
If, however, $\tau_i$ were non-trivial, then 
since everything is projective,
a metric $\om_i$
on $X_i$ is almost everywhere of the form:
$$\om_i = \tau_i ^* \eta_i + \ddc\log \Vert\un_E\Vert^2 $$
for $\eta_i$ a metric on $Y_i$ and $\Vert\un_E\Vert^2 $ 
some distance function on a subscheme $E$ blown down
by $\tau_i$. Consequently we can guarantee that the
$T_{in}(r)$ are bounded if the origins $f_n(0)$ are
bounded away from $\pi_i(E)$, or, just that
the distance between them satisfies an appropriate
growth condition.

\noindent{\bf (c)} The proof of \ref{prop:closed} could fail, and
limits of the $T_{in}$ need not be closed. Modulo a variant,
\ref{claim:lang2} of \ref{claim:lang1}, this could only
happen if the pre-image of discs under $p_{in}$ of 
the size
for which \ref{lem:origin} holds failed to be discs,
which, again, cannot happen provided that the origins
$f_n(0)$ are bounded away from the ramification of
$\pi_i$ or satisfy an appropriate growth condition on
the distance to it. Better, a further case, therefore,
in which one can dispense with any condition on the
distance between the ramification in $\pi$ and $f_n(0)$
is $\pi$ finite and $f_n$ not meeting any point of
the branch locus. 

The aforesaid, albeit required, variant of \ref{claim:lang1} is simply:}
\end{sch}
\begin{claim}\label{claim:lang2}
Let $S_{ni}$ be a 
 doubly indexed sequence 
of increasing differentiable functions
then for $r \in 
(0,1)$ outside a set of finite hyperbolic measure:
$$
S'_{nk} (r) \leq \{ S_{nk} (r) + 2e \} \log^{n^2 \, + \, k^2 \, + 
\, 1} \{ S_{nk} (r) + 2e \} \, (1-r^2)^{-1} \, .$$
\end{claim}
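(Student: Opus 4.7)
The plan is to run the same telescoping argument as in Claim \ref{claim:lang1}, using the logarithmic factor in place of the power $S^{1+\delta}$, and extracting convergence of the double sum from the exponent $n^2+k^2+1$ together with the shift $+2e$.

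Concretely, let $B_{nk}\subset [0,1)$ be the set on which the stated inequality fails for the pair $(n,k)$. Because $S_{nk}$ is increasing and differentiable, the definition of $B_{nk}$ and the change of variables $u=S_{nk}(r)+2e$ give
$$\int_{B_{nk}}\frac{dr}{1-r^2}\;\leq\;\int_{B_{nk}}\frac{S'_{nk}(r)\,dr}{(S_{nk}(r)+2e)\log^{n^2+k^2+1}(S_{nk}(r)+2e)}\;\leq\;\int_{2e}^{\infty}\frac{du}{u\,\log^{n^2+k^2+1}u}.$$
A second substitution $v=\log u$ reduces the right hand side to the explicit tail $\int_{\log 2e}^{\infty} v^{-(n^2+k^2+1)}\,dv \;=\; (n^2+k^2)^{-1}(\log 2e)^{-(n^2+k^2)}$, valid whenever $(n,k)\neq (0,0)$.

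The key observation, which is also the real role of the shift $+2e$ absent from \ref{claim:lang1}, is that $\log 2e = 1+\log 2 > 1$, so the factor $(\log 2e)^{-(n^2+k^2)}$ decays geometrically in $n^2+k^2$. Consequently $\sum_{(n,k)\neq(0,0)} (n^2+k^2)^{-1}(\log 2e)^{-(n^2+k^2)}$ is trivially finite, and the union $\bigcup_{n,k} B_{nk}$ has finite hyperbolic measure, which is exactly what was to be shown.

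The only real obstacle is the bookkeeping: in \ref{claim:lang1} summability in the single index $n$ was supplied by the hypothesis $S_n(r_0)\geq n^{2/\delta}$, whereas here no such hypothesis on the $S_{nk}$ is available, and summability in both indices must instead be manufactured internally from the exponent $n^2+k^2+1$ in the logarithm, with the shift $+2e$ ensuring that $\log(S_{nk}+2e)$ stays uniformly bounded away from $1$ so that the tail integral produces a factor exponentially small in $n^2+k^2$.
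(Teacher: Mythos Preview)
Your argument is correct and essentially identical to the paper's own proof: both bound the hyperbolic measure of the bad set by $\sum_{n,k}\int \frac{dx}{x\log^{n^2+k^2+1}x}$ via the change of variables $x=S_{nk}(r)+2e$, evaluate the tail integral as $(n^2+k^2)^{-1}(\log(\text{lower bound}))^{-(n^2+k^2)}$, and use $\log(2e)>1$ to conclude summability. The only cosmetic difference is that the paper takes the lower limit of integration to be $F_{nk}(0)=S_{nk}(0)+2e$ rather than your $2e$, but since $S_{nk}\geq 0$ in the intended applications this changes nothing.
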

\noindent{\it Proof.} Indeed writing $F_{nk} (r) = S_{nk} (r) + 
2e$, we have $\log F_{nk} (0) > 1$, and the set in question has 
measure bounded by,
$$
\sum_{n,k} \int_{F_{nk} (0)}^{\ify} \frac{dx}{ x \log^{n^2 \, + \, 
k^2 \, + \, 1} x} = \sum_{n,k} \frac{1}{(n^2 + k^2) (\log F_{nk} 
(0))^{n^2 \, + \, k^2}} < \ify \, \hspace{0.8cm} \Box
$$

Whence to summarise:

\noindent{\bf Summary} \ref{sch:systems}.bis: Let $\pi_i:X_i\ra X$ be
as in \ref{sch:systems}, and $T$ a closed positive current arising
from a weak limit of some sub-sequence $T_n(r)$ afforded by discs
$f_n$ satisfying the conditions of \ref{prop:closed}, then provided
the $f_n(0)$ are bounded away from the branch locus of $\pi_i$, or,
indeed the distance between these satisfies an appropriate growth
condition, there is a closed positive current $T_i$ on $X_i$ such
that $(\pi_i)_*(T_i)=T$. In addition if $D_i$ is any effective Cartier
divisor on $X_i$ such that the $f_n(0)$ do not accumulate in $\pi_i(D_i)$,
or, again, a suitable growth condition on the distance between the
same, then: $D_i\cdot T_i\geq 0$.

Finally we need to compare the limits afforded by the
different definitions \ref{eq:cint} and \ref{eq:cnev}.
Observe that in the former case, the limiting discs
$\D_n$ satisfy:
\begin{defn}\label{defn:ahlfors}  The length
of the boundary $l_n$ in the metric $f_n^*\om$ is $o(a_n)$
of the area computed in the same, i.e. if
the area alternative of \ref{eq:unbounded} holds, the limits
of the $A_n(r)$ in \ref{prop:closed} are Ahlfors''
currents in the sense of \cite[pg. 306]{duval}.
\end{defn}

Now for $t\in (0,1)$, let $a_n(t)$ be the
area of the disc of radius $t$ with respect to
$f_n^*\om$, and $r_0$ the supremum of $t\in [0,1)$
for which $a_n(t)$ is bounded. We'll suppose
that $r_0<1$, so the Nevanlinna alternative
of \ref{eq:unbounded} holds for all $r>r_0$.
Fix such a $r$,  
then on $[0,r]$ we have probability measures:
\begin{equation}\label{eq:measures}
\begin{split}
\dm_n(t)& := \left(\nint_{\D (r)} f_n^*\om\right)^{-1} a_n(t)\frac{dt}{t},
\,\,\,\mathrm{and,}\\
\dn_n(s)& := \left(\nint_{\D (r)} f_n^*\om\right)^{-1} a_n'(s)\log\vert\frac{r}{s}\vert ds
\end{split}
\end{equation}
which for any continuous $\rho$ are related by:
\begin{equation}\label{eq:transform}
\int \rho\dm_n(t) = \int\dn_n(s) \frac{1}{\log\vert\frac{r}{s}\vert}\cdot
\int_s^r \rho(t)\frac{dt}{t} := \int\dn_n(s) N_r(\rho) (s)
\end{equation}
and the value at $s$ equal to $0$, or $r$, of the Nevanlinna type transform
$N_r(\rho)$ is just the above limited in $s$.

Similarly for $U\sbs X$ open, we have the area in
$U$, {\it i.e.}
\begin{equation}\label{eq:Uarea}
a_n^U(t):= \int_{f_n^{-1}(U)} f_n^*\om,
\,\,\,\,\, \hbox{together with the ratio:}\,\,\,
\phi_n(t) = \frac{a_n^U(t)}{a_n(t)}
\end{equation}
or, if one prefers to keep things smooth, multiply $\om$ by
a smooth $[0,1]$ valued bump
function identically 1 on $U$, and, in any case, there are measures:
\begin{equation}\label{eq:Umeasures}
\begin{split}
\dm^U_n(t)& := \left(\nint_{\D (r)} f_n^*\om\right)^{-1} a^U_n(t)\frac{dt}{t},
\,\,\,\,\, \mathrm{and,}\\
\dn^U_n(s)& := \left(\nint_{\D (r)} f_n^*\om\right)^{-1} 
\left( \int_s^r \phi_n(t) \frac{dt}{t}\right) a_n'(s)ds
\end{split}
\end{equation}
Subsequencing we may suppose that the $\phi_n$ converge
pointwise to some $\phi$, while \ref{eq:measures}
converge to probability measures $\dm$, 
$\dn$ on $[0,r]$, related as in \ref{eq:transform}, and \ref{eq:Umeasures} converge to measures
$\dm^U$, $\dn^U$, necessarily absolutely continuous
with respect to $\dm$, respectively $\dn$.
Let us observe:
\begin{lem}\label{lem:lebesgue} Notation as above, then:
\begin{enumerate}
\item[(a)] If $\dm$ has no support in $(a,b]\subseteq (0,r]$ then
$\dn$ has no support in $(0,b]$.
In particular if $r_0>0$ then none of the above are supported
in $(0,r_0)$.
\item[(b)] In the open interval $(0,r)\sbs [0,r]$ the 
Lebesgue derivative of $\dn^U$ by $\dn$ is, 
$N_r(\phi)$, defined exactly as in \ref{eq:transform},
at $s\in (0,r)$.
\end{enumerate}
\end{lem}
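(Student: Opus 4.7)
The plan is to work with the pre-limit expressions of \ref{eq:measures} and \ref{eq:Umeasures} directly, and pass to limits by a combination of monotonicity, Fubini and dominated convergence, with weak convergence providing the only topological input. Write $F_n(t) := (\nint_{\D(r)} f_n^*\om)^{-1} a_n(t)$, so that $F_n$ is smooth and non-decreasing in $t$ with $F_n(0)=0$, and $\dm_n = F_n(t)\,dt/t$, $\dn_n = F_n'(s)\log(r/s)\,ds$. For (a): since $\mathrm{supp}(\dm)$ is closed, its complement is an open set containing $(a,b]$, hence some $(a', b+\e)$. A standard Helly-type extraction gives a non-decreasing $F$ with $\dm = F(t)\,dt/t$ on $(0, r)$, so $F \equiv 0$ on $(a', b+\e)$, and, since $F(0)=0$ by monotonicity, on all of $[0, b+\e)$. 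By Dini's theorem applied to the monotone $F_n$, this pointwise convergence is uniform on $[0, b]$. For $\rho \in C^{\infty}_c((0, b])$, integration by parts in $s$ (the boundary terms vanish since $F_n(0)=0$ and $\rho$ has compact support in $(0,b]$) yields
$$
\int_0^r \rho\,\dn_n \;=\; \int F_n(s)\Bigl[\frac{\rho(s)}{s} - \rho'(s)\log\frac{r}{s}\Bigr]\,ds,
$$
with integrand supported in a compact subset of $(0, b]$, whence the integral tends to $0$. Thus $\dn$ has no support in $(0,b]$. The ``in particular'' part is immediate: for $t<r_0$ boundedness of $a_n(t)$ and $\nint_{\D(r)} f_n^*\om \to \infty$ give $F_n(t) \to 0$, and absolute continuity propagates the statement to $\dm^U$ and $\dn^U$.

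For (b), direct comparison of \ref{eq:measures} and \ref{eq:Umeasures} yields the pre-limit Radon-Nikodym identity $\dn^U_n = N_r(\phi_n)(s)\,\dn_n$ on $[0,r]$. Fix $\rho \in C_c((0,r))$, supported in some $[\e, r-\e]$. The left-hand side of $\int \rho\,\dn^U_n = \int \rho\,N_r(\phi_n)\,\dn_n$ converges to $\int \rho\,\dn^U$ by weak convergence. For the right-hand side, Fubini rewrites
$$
\int \rho\,N_r(\phi_n)\,\dn_n \;=\; \int_0^r \phi_n(t)\,K_n(t)\,\frac{dt}{t}, \qquad K_n(t) := \int_0^t \frac{\rho(s)}{\log(r/s)}\,\dn_n(s).
$$
Since $\rho(\cdot)/\log(r/\cdot)$ is bounded and continuous on $[0,r]$ (zero near both endpoints), weak convergence $\dn_n \to \dn$ gives $K_n(t) \to K(t) := \int_0^t \rho(s)/\log(r/s)\,\dn(s)$ at every $t \in (0,r)$ outside the countable atom set of $\dn$. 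Combined with $\phi_n \to \phi$ pointwise and the uniform bound $|\phi_n K_n| \leq \|\rho/\log(r/\cdot)\|_\infty$ (the $\dn_n$ being probability measures), dominated convergence on $([\e, r], dt/t)$ produces the limit $\int \phi\,K\,dt/t$, which a second application of Fubini identifies as $\int \rho\,N_r(\phi)\,\dn$.

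The principal analytic subtlety is the non-locality of $N_r$: one cannot pass to the limit in $\int \rho\,N_r(\phi_n)\,\dn_n$ by naive pointwise arguments when both the integrand $N_r(\phi_n)$ and the measure $\dn_n$ vary with $n$. The Fubini manoeuvre above separates their roles, converting the troublesome integration of $N_r(\phi_n)$ against the varying $\dn_n$ into an integration of a \emph{fixed} bounded continuous function against $\dn_n$ (where weak convergence is tailor-made), leaving the outer integration to be handled by elementary dominated convergence for $\phi_n \to \phi$. The analogous obstacle in (a) is circumvented by the topological reading of ``no support'', which yields vanishing of $F$ on a full one-sided neighbourhood of $[0,b]$ and hence uniform, not merely $L^1$, control on the $F_n$ there.
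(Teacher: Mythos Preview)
Your argument is correct, but it is considerably more elaborate than the paper's two-line proof. For (a), the paper simply applies the transform identity \ref{eq:transform} with $\rho=\un_{(a,b]}$: since $N_r(\un_{(a,b]})(s)>0$ for every $s\in(0,b)$ while $\int\un_{(a,b]}\,d\mu=0$, positivity forces $d\nu((0,b))=0$, and the endpoint is handled by closedness of support. Your route via Helly extraction, the representation $d\mu=F(t)\,dt/t$, and integration by parts reaches the same conclusion, but the intermediate step ``$F_n\to 0$ uniformly on $[0,b]$'' is more directly obtained by testing $d\mu_n$ against a bump supported just to the right of $b$, which gives $F_n(b)\to 0$ and hence uniform convergence by monotonicity alone, without any appeal to Dini or Helly.

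For (b), the paper observes that $N_r(\phi_n)\to N_r(\phi)$ \emph{uniformly} on compact subsets of $(0,r)$ (by Egoroff, or directly by dominated convergence since $|N_r(\phi_n)(s)-N_r(\phi)(s)|\leq(\log(r/\beta))^{-1}\int_\alpha^r|\phi_n-\phi|\,dt/t$ on $[\alpha,\beta]$), which, combined with weak convergence of $d\nu_n$ and continuity of $N_r(\phi)$, lets one pass to the limit in $\int\rho\,N_r(\phi_n)\,d\nu_n$ by the elementary splitting into $\int\rho(N_r(\phi_n)-N_r(\phi))\,d\nu_n+\int\rho\,N_r(\phi)\,(d\nu_n-d\nu)$. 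Your Fubini manoeuvre is a legitimate alternative that trades this uniform-convergence observation for a decoupling of the $n$-dependence, at the cost of invoking Portmanteau-type convergence of $K_n(t)$ at non-atoms of $d\nu$. Both approaches are sound; the paper's is shorter because it exploits the specific averaging structure of $N_r$ to upgrade pointwise to locally uniform convergence for free.
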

\begin{proof}
\ref{eq:transform} is valid for the characteristic
function of the interval too, which proves (a),
while,  by Ergoff's theorem, $N_r(\phi_n)\ra N_r(\phi)$ 
uniformally on compact subsets of $(0,r)$ which proves (b).
\end{proof}
From which we progress to:
\begin{fact}\label{fact:almost}{\em
Suppose the total mass of $\dm^U$, equivalently
that of $\dn^U$, is non-zero then if $\dn^U$
has support on $(0,r)$ there is a  $r_0< t < r$
such that, after subsequencing, 
$A_n(t)$ of \ref{eq:cint} 
converge to an Ahlfors current,  non-zero  in $U$.
Better still, there is a set, $E\sbs (0,1)$, 
depending on $\phi_n$, of zero Lebesgue measure
such that if $r\in (0,1)\bsh E$ 
is sufficiently large, then the same holds
whenever
$\dn^U$ has support
in $(0,r]$. }
\end{fact}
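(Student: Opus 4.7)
The plan is to reduce the hypothesis on $d\nu^U$ to producing a single radius $t \in (r_0, r)$ enjoying two properties: (i) $\phi(t) > 0$, and (ii) the length--area ratio $l_n(t)/a_n(t) \to 0$ along a subsequence. Once such a $t$ is secured, the area alternative of (\ref{eq:unbounded}) is automatic since $t > r_0$, and the first half of Proposition \ref{prop:closed} extracts from $A_n(t)$ a weakly convergent subsequence whose limit $A$ is a closed positive Ahlfors current. Testing against any smooth $\eta \geq \chi_U$ gives $A(\eta \om) = \lim_n A_n(t)(\eta \om) \geq \liminf_n \phi_n(t) = \phi(t) > 0$, so $A$ is non-zero in a neighbourhood of $\bar U$, which is the asserted conclusion.

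For property (i), Lemma \ref{lem:lebesgue}(b) identifies the Lebesgue derivative $d\nu^U / d\nu$ on $(0,r)$ with $N_r(\phi)$, so the hypothesis $d\nu^U((0,r)) > 0$ implies $N_r(\phi) > 0$ on a subset of $(0, r)$ of positive $d\nu$-measure. By Lemma \ref{lem:lebesgue}(a) the measure $d\nu$ is supported in $[r_0, r]$, so such a point $s$ lies in $[r_0, r)$; unwinding $N_r(\phi)(s) = \log(r/s)^{-1}\int_s^r \phi(t)\,dt/t > 0$ forces $\phi > 0$ on a Lebesgue-positive subset of $(s, r) \subseteq [r_0, r)$.

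For property (ii), the Cauchy--Schwarz length--area bound $l_n(t)^2 \ll t\, a_n'(t)\, a_n(t)$ (already exploited in the proof of Proposition \ref{prop:closed}) shows that the Ahlfors condition $l_n(t)/a_n(t) \to 0$ holds whenever $a_n'(t) \ll a_n^{1+\delta}(t)(1-t^2)^{-1}$ for some $\delta < 1$. Applying Claim \ref{claim:lang1} to $S_n = a_n$ gives that the bad set $B_n$ violating this bound has hyperbolic, hence Lebesgue, measure on $(0, r)$ controlled by $1/a_n(r_0)^{\delta}$, which is summable. Borel--Cantelli then yields that Lebesgue-a.e. $t$ belongs to $B_n$ for only finitely many $n$. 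Intersecting the positive-measure set from the previous paragraph with this full-measure set produces a valid $t$, and completes the basic case.

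For the \emph{better still} assertion, the only extra possibility is that $d\nu^U$ concentrates mass at $\{r\}$ itself. I propose to define $E \subset (0,1)$ as the set of $r$ for which some subsequential weak limit of $d\nu_n^{U,r}$ carries an atom at the endpoint $r$. Since the pre-limit density $a_n'(s)\int_s^r \phi_n(t)\,dt/t$ vanishes continuously as $s \uparrow r$, such an endpoint atom can arise only from pathological concentration of the derivative $a_n'$ inside shrinking intervals $(r-\varepsilon, r)$; a Fubini integration in the parameter $r$ against the continuous densities shows that $E$ is Lebesgue-null. For $r \notin E$ large enough, support of $d\nu^{U,r}$ in $(0,r]$ collapses to support in $(0,r)$, reducing to the basic case. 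The principal obstacle will be arranging this Fubini control uniformly across all subsequential limits, so that the null set $E$ depends only on the originally chosen sequence $\phi_n$ and not on any particular extraction used to obtain $\phi$ or the measures $d\mu, d\nu$.
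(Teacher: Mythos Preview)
Your treatment of the first assertion is essentially the paper's: Lemma \ref{lem:lebesgue}(b) forces $\phi>0$ on a Lebesgue-positive subset of $(r_0,r)$, and Claim \ref{claim:lang1} (after subsequencing so that $a_n(r_0)$ grows rapidly enough) ensures length--area holds for all $n$ outside a set of arbitrarily small measure, so a common $t$ exists.

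For the ``better still'', however, your Fubini sketch is not a proof, and the obstacle you name at the end is genuine and unresolved by what you have written. The paper proceeds quite differently, via a dichotomy you do not make explicit. Either $\phi\neq 0$ on a set of positive measure in $(r_0,1)$, in which case for every sufficiently large $r$ the first part already applies and $E=\emptyset$; or $\phi=0$ Lebesgue almost everywhere. In the second case the paper shows the hypothesis becomes \emph{vacuous} for a.e.\ $r$, and the mechanism is the decreasing envelope
\[
\psi_n(t):=\sup_{m\geq n}\phi_m(t),
\]
which depends only on the original sequence, dominates every $\phi_m$ with $m\geq n$, and still converges to zero a.e. By Lebesgue differentiation, for each $n$ and a.e.\ $r$ one has $N_r(\psi_n)(s)\to\psi_n(r)$ as $s\uparrow r$; take $E$ to be the null set where this fails for some $n$ or where $\psi_n(r)\not\to 0$. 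For $r\notin E$ and $\varepsilon>0$ given, choose $n$ with $\psi_n(r)<\varepsilon$ and then $\delta>0$ with $N_r(\psi_n)<\varepsilon$ on $(r-\delta,r)$. Since $N_r(\phi_m)\leq N_r(\psi_n)$ for all $m\geq n$, this bounds $d\nu^U_m((r-\delta,r])\leq\varepsilon$ uniformly in $m\geq n$, whence $d\nu^U(\{r\})\leq\varepsilon$; combined with $d\nu^U\vert_{(0,r)}=N_r(\phi)\,d\nu=0$ one obtains $d\nu^U((0,r])=0$.

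The envelope $\psi_n$ is precisely the device that makes a single null set $E$, determined by the $\phi_n$ alone, work simultaneously for every subsequence --- exactly the uniformity your proposal is missing. A Fubini average in $r$ of the endpoint mass of $d\nu^{U,r}_n$ may control one fixed sequence, but there is no evident way to pass from that to the uncountable family of subsequential limits without a monotone majorant of this kind.
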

\begin{proof} The easy case occurs should $\dn^U$
have support in $(0,r)$ so that by \ref{lem:lebesgue},
$\phi\vert_{(r_0,r)}\neq 0$ on a set of positive
Lebesgue measure. Subsequencing doesn't change $\phi$,
but it does make the measure of the set occurring
in \ref{claim:lang1}, modulo scaling from $(0,r)$
to $(0,1)$ and putting $S_n=a_n$ as small as we please. 
Whence we can find $t\in (r_0,r)$ for which we 
simultaneously have length area for all $n$ on
discs of radius $t$ and $\phi_n(t)$ converging
to $\phi(t)\neq 0$. 

This also proves the better still, with $E$ empty,
provided $\phi\vert_{(r_0,1)}\neq 0$ almost everywhere,
and $r$ is sufficiently large. Consequently we
may suppose $\phi$ zero Lebesgue almost everywhere.
For $n\in\bn$ consider the decreasing functions:
$$\psi_n(t):=\sup_{m\geq n} \phi_m(t)$$
which still converge to zero Lebesgue almost
everywhere. On the other hand, \cite[2.9.19]{fed},
for Lebesgue almost all $r$, the limit,
$$
\lim_{s\ra r^{-}} \frac{1}{\log\vert\frac{r}{s}\vert} \cdot \int_s^r \psi_n (t)\frac{dt}{t}
$$
exists, and is equal to $\psi_n(r)$, so, zero for
$r\notin E$, with $E$ of zero Lebesgue measure.
As such, if $\e>0$ is given, and $r\notin E$,
there is a $\d>0$ such that for all $s\in (r-\d,r)$,
$N_r(\psi_n)(s)\leq \e$. By construction, $N_r(\psi_n)\geq N_r(\phi_m)$
for all $m\geq n$, so for such a $r$ the measure
of $\dn^U$ on $(0,r]$ is at most $\e$ which was
arbitrary, {\it i.e.} the assertion is vacuous
at Lebesgue almost all $r$.
\end{proof}
The utility of which will present itself in the
next section.

\subsection{Compactness}\label{SS:compact}

Although subordinate to more general theorems,
the following will prove useful,

\begin{lem}\label{lem:easycon} Let $f_n:\D\ra \bp^m$
be a sequence of discs defined up to the boundary
with $\om$ as per \ref{defn:fubini}, and admitting
a uniform bound in $n$ for,
$$
\nint_{\D} f_n^*\om
$$
then we may write $f_n=[f_{n0},\hdots,f_{nm}]$ in such
a way that after subsequencing the $f_{ni}$, $0\leq i\leq m$, converge uniformly
to some $g_i$ on compact subsets of $\D$. 
\end{lem}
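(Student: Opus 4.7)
The plan is to use the freedom in the choice of trivialisation of the pullback line bundle $f_{n}^{*}\cO(1)$ to normalise so that $\tfrac{1}{2}\log\sum_{i}|f_{ni}|^{2} \leq 0$ on $\D$; the coordinate functions are then bounded by $1$ and Montel's theorem closes the argument.

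First I would trivialise. Since $\D$ is Stein and simply connected, each $f_{n}^{*}\cO(1)$ is holomorphically trivial, so $f_{n} = [f_{n0}:\hdots:f_{nm}]$ for holomorphic $f_{ni}:\D\ra\bc$, and by absorbing the common-zero divisor into the trivialisation we may assume the $f_{ni}$ have no common zero on $\D$. Then $u_{n} := \tfrac{1}{2}\log\sum_{i}|f_{ni}|^{2}$ is PSH on all of $\D$ with $\ddc u_{n} = f_{n}^{*}\om =: \mu_{n}$, and the remaining trivialisation freedom is multiplication of all coordinates by a common nowhere-vanishing holomorphic factor, which by simple connectivity has the form $e^{\varphi_{n}}$ for holomorphic $\varphi_{n}$, with the effect of adding $\Re(\varphi_{n})$ to $u_{n}$. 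A Fubini rearrangement of the hypothesis $\nint_{\D} f_{n}^{*}\om \leq M$ gives the logarithmic-weight control
$$\int_{\D} |\log|w|| \, d\mu_{n}(w) \leq M,$$
whose shape matches the first-order vanishing on $\partial\D$ of the disc's Green's function $G_{\D}(z,w) := \log|(w-z)/(1-\bar{z}w)| \leq 0$.

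I would then Riesz-decompose with respect to $G_{\D}$: setting $v_{n}(z) := \int_{\D} G_{\D}(z,w)\, d\mu_{n}(w)$, the above bound together with the log-integrability of $G_{\D}$ at $w = z$ makes $v_{n}$ a well-defined non-positive function with $\ddc v_{n} = \mu_{n}$, so $h_{n} := u_{n} - v_{n}$ is harmonic on $\D$. Simple connectivity once more gives a holomorphic $\psi_{n}$ with $\Re(\psi_{n}) = h_{n}$, and we use the remaining trivialisation freedom to multiply by $e^{-\psi_{n}}$; the new coordinates (still called $f_{ni}$) now satisfy $u_{n} = v_{n} \leq 0$, whence $|f_{ni}(z)|^{2} \leq e^{2u_{n}(z)} \leq 1$ on all of $\D$. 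A diagonal Montel extraction then produces a common subsequence along which all $f_{ni}$ converge uniformly on compact subsets of $\D$, as required. The hard part is the convergence of the Green potential $v_{n}$ despite the possibly-infinite total Riesz mass $\mu_{n}(\D)$, but this is exactly what the match between the logarithmic-weight Nevanlinna bound on $\mu_{n}$ and the boundary-vanishing of $G_{\D}$ delivers.
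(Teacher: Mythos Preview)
Your proof is correct and takes a genuinely different, more potential-theoretic route than the paper. The paper's argument is hands-on: after subsequencing so that the $f_n(0)$ accumulate at a chosen point, it extracts, for each coordinate hyperplane $H_i$, convergent zero-cycles $f_n^{-1}(H_i)$ on compact subdiscs, writes each coordinate as an explicit product of linear factors times a unit $u_{ni}$, and then bounds the units uniformly via Jensen's formula together with the estimates of Lemma~\ref{lem:origin} (applied both to the ratios $g_{ni}=f_n^*X_i/f_n^*X_0$ and, after an inversion on $\bg_m^n$, to their reciprocals). Your approach bypasses all of this: the key observation that $v_n(0)=\int_\D\log|w|\,d\mu_n(w)$ is exactly minus the Nevanlinna characteristic, hence bounded below by $-M$, already shows the Green potential is not identically $-\infty$, after which the Riesz decomposition and the renormalisation by $e^{-\psi_n}$ give $|f_{ni}|\leq 1$ directly. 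What the paper's route buys is an explicit description of the coordinate functions in terms of their zeros and converging units, which is exactly the form exploited in the subsequent bubble analysis of Proposition~\ref{prop:nobubbles}; your argument is cleaner for the bare statement and avoids any appeal to Lemma~\ref{lem:origin}, but it does not hand you that factorisation for free.
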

\begin{proof} Without loss of generality we may suppose the $f_n(0)$ accumulate 
somewhere, so say $1 = [1,\ldots ,1]$ in an appropriate projective coordinate system 
consistent with \ref{defn:fubini} as already employed in \ref{lem:origin}.
Consequently 
if $X_i \in \Gamma (\bp^m , H)$ is the equation of a coordinate hyperplane then modulo 
subsequencing for any $r\in (0,1)$,  we can suppose that for 
each $i$, the $f_n^{-1} (X_i) \cap \D (r)$, counted with multiplicity are a convergent 
sequence of $0$ cycles in $\bar{\D}(r)$ which,
after slightly increasing $r$
if necessary, belong to $\bar{\D}(s)$ for some 
$s < r$, which we write as,
$$
f_n^{-1} (X_i) = \sum_{z \in \bar{\D}(s)} a_{ni} (z) [z] \ra \sum_{z \in \bar{\D}(s)} a_i 
(z)[z]
$$
where as in the proof of \ref{lem:origin}, $a_{ni} (z) = 0$ for all $\vert z \vert \leq t$, 
$t$ independent of $i$ 
and $n$. The disc being Stein we can write $f_n \mid_{\D (r)}$, for an 
appropriate $1 > R > r$ as,
$$
\left[ \prod_{w \in \bar{\D}_s} (z-w)^{a_{n0} (w)} , \prod_{w \in \bar{\D}_s} 
(z-w)^{a_{n1} (w)} u_1 , \ldots , \prod_{w \in \bar{\D}_s} (z-w)^{a_{nm} (w)} u_m \right]
$$
where the $u_1 , \ldots , u_m$ are units. Now consider the meromorphic functions $g_{ni}$ 
defined as $f_n^* \, X_i / f_n^* \, X_0$ and apply Jensen's formula, i.e. for $z \in 
\D (r)$,
$$
\log \left(\vert g_{ni} (z) \vert \cdot \vert P_{ni} (z)\vert\right) = \int_{\vert w \vert = R} \log 
\vert g_{ni} (w)\vert \, \mathrm{Re} \left\{ \frac{w+z}{w-z} \right\} d^c\log\vert w
\vert^2
$$
where $P_{ni} (z)$ is the Blaschke product,
$$
P_{ni} (z) = \prod_{w \in \bar{\D}(s)} \left\{ 
\frac{R^2 - \bar{w} \, z}{R(z-w)} 
\right\}^{a_{ni} (w) - a_{n0} (w)} 
$$
Consequently employing the technique of \ref{lem:origin}, or more correctly its proof, to bound the 
integrals over $\vert w \vert = R$ of $\log (1 + \vert g_{ni} \vert)$ and $\log \left( 1 + 
\frac{1}{\vert g_{ni}\vert} \right)$ independently of $n$ we conclude that for some 
constant $C$ independent of $n$, $z \in \D (r)$,
$$
\vert u_{ni} \vert (z) \leq C \cdot \frac{(R^2 - rs)^a}{R^b}
$$
for non-negative integers $a,b$ independent of $n$ determined by the signs of $a_i (w)$ 
-  $a_0 (w)$, so that the $u_{ni}$ converge on subsequencing and we're done.
\end{proof}

The lemma does not, imply,
however that the $f_n$ converge, {\it e.g.}
$$
f_n : \D \ra \bp^1 : z \ra \left[z - \z , z -\z + \frac{1}{n} 
\right]
$$
for any $\z\in\D\bsh 0$, and quite generally, a similar
problem presents itself at any point in the common 
zero locus of the limit $g_0 , \ldots , g_m$. On the other hand 
the maps $f_n$ manifestly converge, and 
constitute a particularly simple case of Gromov
convergence.
Deducing this from \ref{lem:easycon}
is postponed till it is necessary, \ref{prop:nobubbles},
and for the moment 
we summarise a more general setting, thus concentrating
on the differences arising from
using \ref{eq:nev} rather than \ref{eq:int},
beginning with:

\begin{defn}\label{defn:bubble} A disc with bubbles $\D^b$ 
is a connected $1$-dimensional analytic curve with singularities at 
worst nodes exactly one of whose components is the unit disc $\D$, 
and the closure of every connected 
component $R_z$, $z \in R (\D^b) := \D \cap {\mathrm{sing}} (\D^b)$, of the
complement of $\D$ is a tree of smooth rational curves.
\end{defn}

Clearly for $\tau$ an integrable $(1,1)$ form on each component
of some $\D^b$ without a bubble at the origin we
can extend \ref{eq:notation}, by way of,
\begin{equation}\label{eq:morenotation}
\nint_{{\D}^b_r}\,\tau:= \nint_{\D (r)} \, \tau + \sum_{\genfrac{}{}{0pt}{}{ z \, \in \, R (\D^b)}{\vert z \vert \, < \, r}} \log 
\frac{r}{\vert z \vert} \int_{R_z} \tau
\end{equation}
and similarly \ref{eq:cint}, which necessarily has 
similar positivity properties for 
intersections with effective divisors to the intersection product 
over discs. In addition we can define a graph $\G_f$ as,
$$
\G_f = ({\rm id} \ts f) \, (\D) \bigcup_{z \, \in \, R (\D^b)} z \ts f 
(R_z) \sbs \D \ts X \, .
$$
and we have, 
\begin{fact}\label{fact:misha}\cite{gromov}
{\em Let $X$ be a complex space admitting a K\"ahler
form $\om$ (or indeed champ admitting the same) then
if $X$ is compact any sequence of maps $f_n:\D\ra X$ such that,
we have a uniform bound in the Nevanlinna area,
$$
\nint_{\D} f_n^*\om
$$
admits a subsequence
converging to a disc $f:\D^b\ra X$ with bubbles,
{\it i.e.} the graphs $\G_{f_n}$ of compact sets
converge in the Gromov-Hausdorff metric to $\G_f$
on compact sets. Unlike the corresponding proposition
for,
$$
\int_{\D} f_n^*\om
$$
the converse is, in general, false, to wit: there may be discs
$f_n$ converging to a disc with bubbles such that,
$$
\nint_{\D (r)} f_n^*\om
$$
is unbounded for every $r>0$. Indeed, by \ref{eq:morenotation},
the Nevanlinna area is bounded in $n$ iff $f_n$ converge
to a disc with bubbles without a bubble in the origin,
equivalently, the $f_n$ converge normally in a neighbourhood
of the origin.}
\end{fact}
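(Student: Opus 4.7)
The plan is to reduce the main assertion to the ordinary form of Gromov's compactness theorem, whose hypothesis is a uniform bound on the usual area $a_n(t):=\int_{\D(t)} f_n^*\om$ on each compact sub-disc. By \ref{eq:notation},
$$\nint_{\D} f_n^*\om = \int_0^1 \frac{dt}{t}\,a_n(t),$$
and since $a_n$ is non-decreasing the uniform bound $C$ on the Nevanlinna area forces $a_n(s)\log(1/s)\le C$ for every $s\in(0,1)$. So ordinary areas are uniformly bounded on each $\D(s)$ with $s<1$, and the classical Gromov theorem (applied to an exhausting sequence of $\D(s_k)$ via a diagonal argument) produces, after subsequencing, a limit $f:\D^b\ra X$ in the sense of Gromov--Hausdorff convergence of graphs on compact sets.

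Next I would verify the crucial refinement that, under the Nevanlinna bound, the limit has no bubble at the origin; this is what makes \ref{eq:morenotation} meaningful on the limit and is equivalent to normal convergence near $0$. Suppose for contradiction there were a bubble $R_0$ of $\om$-mass $A>0$ attached at $z=0$. A standard property of Gromov convergence localises each bubble into arbitrarily small neighbourhoods of its node, giving radii $\rho_n\ra 0$ with $a_n(\rho_n)\ge A/2$ eventually; monotonicity of $a_n$ then yields
$$\nint_{\D} f_n^*\om \;\ge\; \int_{\rho_n}^{1/2}\frac{dt}{t}\,a_n(t) \;\ge\; \tfrac{A}{2}\log\bigl(\tfrac{1}{2\rho_n}\bigr) \;\longrightarrow\; \infty,$$
contradicting the hypothesis. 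The converse direction, no bubble at origin implies bounded $\nint$, is easier and follows from \ref{eq:morenotation} combined with uniform convergence of $f_n$ to $f$ on some small disc around $0$ (the remaining bubbles sitting at points bounded away from $0$ contribute only a bounded amount to the sum in \ref{eq:morenotation}).

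For the failure of the full converse statement, the plan is to exhibit the standard scaling example: fix any non-constant holomorphic $\phi:\bp^1\ra X$ and set $f_n(z):=\phi(nz)$ restricted to $\D$. Then $f_n$ converges in the Gromov sense to the constant map $\phi(\infty)$ with a bubble $\phi$ attached at the origin; the ordinary area $\int_{\D}f_n^*\om$ stays bounded by $A:=\int_{\bp^1}\phi^*\om$, yet for every fixed $r>0$,
$$\nint_{\D(r)}f_n^*\om \;\ge\; \int_{1/n}^{r}\tfrac{A/2}{t}\,dt \;=\; \tfrac{A}{2}\log(rn)\;\longrightarrow\;\infty.$$
The hard part of the plan is the quantitative bubble-localisation invoked in paragraph two: extracting from the Gromov--Hausdorff convergence of graphs on compact sets explicit radii $\rho_n\ra 0$ that absorb a definite fraction of each bubble's mass. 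This is where the compactness machinery of \cite{gromov} must be invoked with some care, since the bare statement of Gromov--Hausdorff convergence of graphs does not come pre-packaged with the desired area-absorption estimate.
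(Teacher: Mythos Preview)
The paper does not prove this statement: it is recorded as a \emph{fact} with the citation \cite{gromov} and no proof environment follows. So there is nothing to compare your argument against.

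Your argument is correct. The reduction to ordinary Gromov compactness via the monotonicity estimate $a_n(s)\log(1/s)\le C$ is the right first step, and the contrapositive argument excluding a bubble at the origin is sound. The ``hard part'' you flag --- extracting radii $\rho_n\ra 0$ absorbing a definite fraction of the bubble's mass --- is indeed packaged inside the proof of Gromov compactness rather than its bare statement, but it is a standard ingredient (see \cite{sik}, cited in the paper for exactly this sort of re-parametrisation) and your appeal to it is legitimate. The scaling counterexample $f_n(z)=\phi(nz)$ is the canonical one and your area estimate for it is fine.

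One minor wording issue: in your converse direction you say the remaining bubbles contribute ``only a bounded amount to the sum in \ref{eq:morenotation}'', but \ref{eq:morenotation} is written for the \emph{limit} $\D^b$, not for the approximating $f_n$. What you actually want is that uniform convergence of $f_n$ on some $\D(\e)$ bounds $\int_0^\e \frac{dt}{t}\,a_n(t)$ uniformly (since $a_n(t)\ll t^2$ there), while on $[\e,r]$ the ordinary-area bound from Gromov convergence on compacts handles the rest. This is what you mean, and it works.
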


Alternatively, and by way of notation,
\begin{defn}\label{defn:hom}
Let $X$ be as in \ref{fact:misha}, then 
the space of maps $\mathrm{Hom} (\D , X)$
from the unit disc, may, whenever
$X$ is compact, be minimally compactified
by the space $\ov{\mathrm{Hom}}(\D , X)$
of discs with bubbles. Or, in the Nevanlinna context
pointed discs with no
bubbling at the point, and, in any case, we have a
strict inclusion,
$$
\mathrm{Hom} (\D , X) \sbs \ov{\mathrm{Hom}}(\D , X)
$$
if and only if $X$ (say smooth for safety) contains a rational curve.
\end{defn}

It is, of course, often convenient to move
the point, so for $C:(0,1)\ra \br$, and $K \sbs \mathrm{Aut} (\D)$ compact
another useful variant is that:
\begin{equation}\label{eq:misha}
\left\{ f \in \ov{\mathrm{Hom}} (\D , X) : 
{\int\!\!\!\!\!\!\nabla}_{\bar{\Delta}(r)} \, \a ^* f^* \, \om
\leq C(r) , \a \in K
\right\}
\end{equation}
is compact, so, in
particular $f_n \in \ov{\mathrm{Hom}} (\D , X)$ has a 
convergent subsequence iff for some subsequence $f_k$, radii 
$r_m \ra 1$ and $\a_k \in \mathrm{Aut} (\D)$ convergent, 
the degrees of the $ \a_k^* f_k $ at $r_m$ are 
bounded 
independently of $k$ for each $m$.

Consider the same with boundary,
{\it i.e.} $X$ compact, and an effective divisor:
\begin{equation}\label{eq:boundary}
B=\sum_i B_i,\,\hbox{
each component $B_i$ of which is $\bq$-Cartier}.
\end{equation}
where the key point is a lemma of Mark Green, {\it viz:}
\begin{lem}\label{lem:green} (\cite{green})  Let 
$(X,B)$ be as in \ref{eq:boundary}, and
$f_n : Y \ra X 
\bsh B$ be a sequence of maps converging uniformly on compact 
subsets of a Riemann surface to $f : Y \ra X$, then either, $f$ 
maps to $X \bsh B$ or $B$, so by induction, there is a
minimal (possibly empty) set of components $I$ such that
$f$ maps to $B_I:=\cap_{i\in I} B_i\bsh\cup_{j\notin I} B_j$.
\end{lem}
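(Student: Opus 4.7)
The plan is to argue component by component using Hurwitz's theorem, and then to collect the resulting dichotomies into the statement.

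First I would fix an index $i$ and a point $y_0\in Y$, and reduce the question to a local one near $f(y_0)\in X$. Since $B_i$ is $\bq$-Cartier, there is an integer $m\geq 1$ so that $mB_i$ is Cartier, and around any point of $X$ one can choose a trivialisation of $\cO_X(mB_i)$ under which the tautological section $\un_{mB_i}$ becomes a local holomorphic equation $\phi_i$ of $mB_i$ (equivalently of the support of $B_i$). Pick a connected neighbourhood $U\ni f(y_0)$ on which such a $\phi_i$ is defined, and a connected relatively compact neighbourhood $V\subset Y$ of $y_0$ with $f(\overline V)\sbs U$. By uniform convergence on compacta, $f_n(\overline V)\sbs U$ for $n\gg 0$, so $\phi_i\circ f_n$ and $\phi_i\circ f$ are well-defined holomorphic functions on $V$, and $\phi_i\circ f_n\to \phi_i\circ f$ uniformly on $V$.

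Now the hypothesis that $f_n$ avoids $B$ forces $\phi_i\circ f_n$ to be \emph{nowhere vanishing} on $V$. Hurwitz's theorem therefore applies: the uniform limit of a sequence of zero-free holomorphic functions on a connected open set is either itself zero-free or identically zero. This yields the local dichotomy: either $\phi_i\circ f$ has no zero on $V$, in which case $f(V)\cap B_i=\emptyset$, or $\phi_i\circ f\equiv 0$ on $V$, in which case $f(V)\sbs B_i$. Because $Y$ is a connected Riemann surface and the set of $y\in Y$ admitting such a neighbourhood covers $Y$, an elementary connectedness/analytic-continuation argument promotes this to a global dichotomy: for each component $B_i$,
\[
\text{either}\quad f(Y)\cap B_i=\emptyset,\qquad\text{or}\qquad f(Y)\sbs B_i.
\]

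Finally, set $I:=\{\, i : f(Y)\sbs B_i\,\}$. By the dichotomy just proved, $f(Y)$ is contained in $\bigcap_{i\in I} B_i$ and disjoint from $\bigcup_{j\notin I} B_j$, which is exactly the statement $f(Y)\sbs B_I$. Taking $I$ minimal for this inclusion gives the stratum asserted, and the inductive form of the lemma follows tautologically by replacing $(X,B)$ by $(B_I,B\cap B_I)$ if $I\neq\emptyset$ and re-running the argument (this inductive step is what is meant by ``so by induction'' in the statement). The only real subtlety is the $\bq$-Cartier reduction which delivers the scalar holomorphic equation $\phi_i$ needed to invoke Hurwitz; once that is in hand, the proof is entirely classical.
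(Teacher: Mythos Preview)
Your argument via Hurwitz's theorem is correct and is exactly the classical proof. The paper itself offers no proof of this lemma, merely citing Green's original article, so there is nothing to compare against; your handling of the $\bq$-Cartier hypothesis (passing to a Cartier multiple to obtain a local scalar equation) and the open--closed connectedness step are precisely what the stated generality requires.
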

Thus if we understand by $\ov{\mathrm{Hom}} (\D , X\bsh B)$,
the closure of ${\mathrm{Hom}} (\D , X\bsh B)$ in
$\ov{\mathrm{Hom}} (\D , X)$, we have:
\begin{fact}\label{fact:boundary} {\em Let things be as in
\ref{eq:boundary}, and
suppose for every set of components $I$, 
$\cap_{i\in I} B_i\bsh\cup_{j\notin I} B_j$
contains no $\ba^1$'s, {\it i.e.} the Zariski
closure in $\cap_{i\in I} B_i$ is $\bp^1$ and
it's normalisation meets $\sum_{j\notin I} B_j$
in at most a point, then we have an identity,
$$
\left\{ \hbox{Closure of} \,\, {\mathrm{Hom}} (\D , X\bsh B)\,\,
\text{in}\,\, \mathrm{Hom}(\D,X)\right\}=\ov{\mathrm{Hom}} (\D , X\bsh B)
$$
and, conversely, if $X$ is smooth with simple 
normal crossing boundary and identity holds
then no stratum $B_I$
contains an $\ba^1$. }
\end{fact}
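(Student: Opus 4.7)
The strategy uses Fact~\ref{fact:misha} to produce Gromov limits, Lemma~\ref{lem:green} to trap them in a single stratum, and, for the converse, a tubular-neighbourhood perturbation manufactured from the SNC hypothesis.

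\emph{Forward direction.} Assume no $\ba^1$ lies in any stratum $B_I$, and suppose some $f_n \in \mathrm{Hom}(\D, X\bsh B)$ Gromov-converges to $f:\D^b \ra X$ carrying a non-trivial bubble rooted at $z_0 \in \D$. By the construction of bubbles in Gromov compactness, each irreducible component $\bp^1$ of the bubble tree is the uniform-on-compacts limit of rescalings $f_{n_k}\circ \phi_k : \D(r_k) \ra X\bsh B$, $r_k \ra \infty$, with $\phi_k$ M\"obius contractions zooming onto $z_0$. Exhausting this component minus its nodes by compact subsets $Y$, Lemma~\ref{lem:green} applied to the restrictions forces the limit image to lie in a single stratum $B_I$ (possibly $I = \emptyset$). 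The component being non-constant, the excision of any single node yields a non-constant map $\bc \ra B_I$, i.e.\ an $\ba^1$ in $B_I$, contrary to hypothesis; hence no bubble can form and the two closures coincide.

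\emph{Converse.} Arguing contrapositively, suppose some stratum $B_I$ carries a non-constant $h:\bc \ra B_I$. By the clarification to the hypothesis, $h$ extends to $\bar h:\bp^1 \ra \cap_{i\in I} B_i$ meeting $\cup_{j\notin I} B_j$ in at most the single point $p_\infty = \bar h(\infty)$. The smoothness of $X$ and the SNC hypothesis on $B$ supply an analytic tubular neighbourhood of $\cap_{i\in I} B_i$ in $X$ with normal bundle $N = \bigoplus_{i\in I} N_{B_i/X}|_{\cap_{i\in I} B_i}$, in which each $B_i$, $i\in I$, is the vanishing locus of the $i$-th summand, while around $p_\infty$ a single SNC chart exhibits every additional $B_j$, $j\notin I$, passing through $p_\infty$ as a further coordinate divisor. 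Choose a holomorphic section $s$ of $\bar h^* N$ over $\bp^1$ whose projection to each summand is non-vanishing off $p_\infty$, and tube-exponentiate $h_\e(z):=\exp_{h(z)}(\e\,s(z))$, so that $h_\e:\bc \ra X\bsh B$ for small $\e>0$. Fixing $z_0 \in \D\setminus\{0\}$ (to place the bubble away from the origin, consistent with the Nevanlinna compactification of Fact~\ref{fact:misha}), set $f_n(z):= h_{\e_n}(n(z-z_0))$ for $\e_n \downarrow 0$ suitable: then $f_n \in \mathrm{Hom}(\D, X\bsh B)$, the Nevanlinna area $\nint_\D f_n^*\om$ is uniformly bounded since $\bar h$ is rational and the bubble sits away from the origin, and a subsequence Gromov-converges to the constant disc at $p_\infty$ carrying a non-trivial bubble at $z_0$ that realises $\bar h$. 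This element of $\ov{\mathrm{Hom}}(\D, X\bsh B) \setminus \mathrm{Hom}(\D, X)$ violates the identity.

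\emph{Main obstacle.} The delicate step is the global perturbation $h_\e$: producing a section $s$ of $\bar h^* N$ on $\bp^1$ whose $|I|$ summand-projections are simultaneously non-vanishing off $p_\infty$, and whose tube-exponentiation keeps $h_\e$ inside $X\bsh B$ even as $h(z)$ approaches $p_\infty$ along $\cup_{j\notin I} B_j$. This is exactly where the SNC hypothesis and the ``at most one point'' clause enter: they permit one to work in a single SNC chart around $p_\infty$ in which every relevant $B_i$ and $B_j$ appears as a coordinate hyperplane and to choose $s$ whose values avoid every one of them.
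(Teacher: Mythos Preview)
Your forward direction is essentially the paper's argument: the paper also rescales a non-uniformly-convergent sequence near a point of bubbling to obtain an entire curve $f:\bc\ra X$ of bounded area (hence $\bp^1$-closure), and then invokes Lemma~\ref{lem:green} to place it in a stratum $B_I$, yielding the forbidden $\ba^1$. The paper does this with a single Brody-type rescaling rather than analysing the full bubble tree; your phrasing ``excision of any single node'' is slightly loose, since a component with several nodes need not give an $\ba^1$ in the sense of the statement (the normalisation could meet $\sum_{j\notin I}B_j$ in more than one point). One should go to a \emph{leaf} of the tree, or, as the paper does, bypass the tree combinatorics entirely.

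For the converse, note first that the paper declares it ``purely illustrative'' and does not prove it. Your proposed construction has a genuine gap: the step ``the smoothness of $X$ and the SNC hypothesis on $B$ supply an analytic tubular neighbourhood of $\cap_{i\in I} B_i$ in $X$'' and the subsequent ``tube-exponentiation'' $h_\e(z)=\exp_{h(z)}(\e\,s(z))$ presuppose a \emph{holomorphic} tubular neighbourhood of the stratum, and such a thing does not in general exist---the SNC hypothesis only gives you local product charts, which need not glue to a global holomorphic retraction. Even granting some local identification near $p_\infty$, you still need the section $s$ of $\bar h^*N$ to be bounded as $z\ra\infty$ for the exponential to make sense there, and if a summand $\bar h^*N_{B_i/X}$ has negative degree on $\bp^1$ your nowhere-vanishing section on $\bc$ necessarily has a pole at $\infty$. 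So as written the perturbation $h_\e$ is not well-defined, and the converse remains unproved.
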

\begin{proof} 
The converse is for comparison with \ref{defn:hom}, and
is purely illustrative, so we ignore it. Otherwise,
we have a sequence $f_n : \D \ra X$ with area
uniformly bounded on 
compact sets, but not converging uniformly. As such there
is some $\z\in\D$
and maps $\vp_n : \D_{R_n} \ra \{ z \mid \vert z 
- \z \vert < \ve_n \}$ with $R_n \ra \ify$ and $\ve_n \ra 0$ such 
that $f_n \circ \vp_n$ converges uniformly on compact subsets 
to a map $f : \bc \ra X$ (cf. \cite{sik}),
of bounded area, so its closure is certainly
a $\bp^1$,
while 
the $f_n$ are maps to $X \bsh B$ so \ref{lem:green} applies
to $f_n\circ\phi_n$.
\end{proof}
By way of a variation on a  theme let us introduce:
\begin{defn}\label{defn:nobubbles}
Let $(X,B)$ be as in \ref{eq:boundary}, with
$f_n:\D\ra X\bsh B$ 
converging to $f:\D^b\ra X$ as per \ref{defn:bubble},
then we say that bubbles cannot form in the
boundary if for every $z$, $f(R_z)$ meets $B$ in at most
$f(z)$, and this only if $f(\D^b)\sbs B$.
\end{defn}
A close to optimal criteria for which is,
\begin{lem}\label{lem:green2}
Suppose for $I\neq\emptyset$ the strata $B_I$ do
not contain $\ba^1$'s then bubbles cannot form
in the boundary. 
\end{lem}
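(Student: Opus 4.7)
The plan is to rerun the argument of Fact~\ref{fact:boundary} at every level of the bubble tree $R_z$: apply Green's lemma~\ref{lem:green} to the Sikorav rescaling that builds each bubble component, and then use the $\mathbb{A}^1$-hypothesis to exclude the bad alternative of the resulting dichotomy.

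Concretely, I would fix a bubble component $C\subset R_z$. The standard Gromov-Sikorav construction, already invoked in the proof of Fact~\ref{fact:boundary}, produces affine rescalings $\varphi_n:\D_{R_n}\to\D$ with $R_n\to\infty$, centred on points converging to the image of the parent-attachment of $C$, such that $g_n:=f_n\circ\varphi_n$ converges uniformly on compacts of $\bc$ to a holomorphic $f_\infty:\bc\to X$ whose continuous extension to $\bp^1$ is $f|_C$, with $\infty$ identified to the parent-attachment of $C$ (so $f|_C(\infty)=f(z)$ when $C=C_0$ is the root). Each $g_n$ takes values in $X\setminus B$, so Lemma~\ref{lem:green} applies to $g_n\to f_\infty$ and forces the dichotomy: either $f_\infty(\bc)\subset X\setminus B$, or $f_\infty(\bc)\subset B_I$ for some minimal non-empty $I$.

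In the second alternative with $f_\infty$ non-constant, $f|_C(\bp^1)$ is a rational curve lying in $\ov{B_I}$ and generically in $B_I$, whose intersection with $\bigcup_{j\notin I}B_j$ is contained in the single ideal point $f|_C(\infty)$ -- indeed the image of $\bc$ already lies in $B_I$, disjoint from every $B_j$ with $j\notin I$. By the explicit description of Fact~\ref{fact:boundary} this rational curve is precisely an $\mathbb{A}^1\subset B_I$, contradicting the hypothesis. So in every case $f_\infty(\bc)\subset X\setminus B$, whence $f|_C(\bp^1)\cap B\subset\{f|_C(\infty)\}$: each bubble component meets $B$ at most at its parent attachment.

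Chaining this conclusion down the tree $R_z$ gives the first clause of Definition~\ref{defn:nobubbles}: the root $C_0$ gives $f(C_0)\cap B\subset\{f(z)\}$; for a child $C'$ of $C_0$ attached at a node $w\neq\infty$ of $C_0$, the attachment image $f(w)\in f_\infty(\bc)\subset X\setminus B$, so the same analysis applied to $C'$ yields $f(C')\cap B\subset\{f(w)\}=\emptyset$; iteration down the tree forces every non-root component into $X\setminus B$, so $f(R_z)\cap B\subset\{f(z)\}$. For the second clause, suppose this intersection is non-empty, so $f(z)\in B$; Lemma~\ref{lem:green} applied to $f_n|_\D\to f|_\D$ then forces $f|_\D\subset B_J$ for some non-empty $J$ and hence $f(\D)\subset B$, so the remaining subtle point is to conclude that the bubble is then trivial, giving $f(R_z)=\{f(z)\}\subset B$ and altogether $f(\D^b)\subset B$. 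This is the principal obstacle: the rescaling-and-Green argument of the first clause only produces a rational curve in $X$ meeting $B$ at the single point $f(z)\in B_J$, which is not intrinsically an $\mathbb{A}^1\subset B_J$ in the sense of Fact~\ref{fact:boundary}; one needs to couple with the fact that the nearby discs $f_n$ now sit in an arbitrarily small neighbourhood of $\ov{B_J}$, re-entering the rescaling picture along the neck joining the main disc to the bubble to locate the forbidden $\mathbb{A}^1$ in an adjacent stratum.
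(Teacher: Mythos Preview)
Your approach via Sikorav rescalings plus Green's lemma is genuinely different from the paper's, which instead runs a decreasing induction on distance in the bubble tree using conservation of local intersection numbers: if $V_n\subset X\setminus B$ converge to the bubble curve $V$ and $p\in V\cap B_j$, then, working in a tubular neighbourhood $U_j\supset B_j$, the class $[V_n\cap U_j]\to[V\cap U_j]$ in $H_{2d-2}(U_j)$ forces some local branch of $V$ through $p$ to lie entirely in $B_j$. This homological argument is what controls the nodes of the tree; your rescaling argument tries to replace it but does not succeed.

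The concrete gap is in your treatment of non-leaf components. You assert that the rescaled $g_n=f_n\circ\varphi_n$ converge uniformly on compacts of all of $\bc$ to $f|_C$, but this is false whenever $C$ has children: energy concentrates at the child-attachment points $w_1,\ldots,w_k$, and $g_n$ converges only on compacts of $\bc\setminus\{w_1,\ldots,w_k\}$. Green's lemma therefore only yields $f|_C(\bc\setminus\{w_i\})\subset X\setminus B$ (in the good alternative), leaving the images $f(w_i)$ uncontrolled. Your root-to-leaf chaining then breaks immediately: the step ``the attachment image $f(w)\in f_\infty(\bc)\subset X\setminus B$'' is precisely what has not been shown, since $w$ is one of the punctures. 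Reversing the induction to leaf-to-root does not help either, because at an internal node $w_i$ you are left with two branches, neither contained in $B_j$, both passing through a putative point of $B_j$, and nothing in the rescaling-plus-Green toolkit excludes this; one needs the intersection-number conservation that the paper invokes.

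The second-clause obstacle you flag is real and you have not resolved it; your proposed fix via the neck region is a reasonable intuition but not an argument. Note, however, that the paper's induction hypothesis is also more refined here (components touching $B$ must have a black path back to the root and $f(\D)\subset B$), and it is this refinement, established via the same intersection-number mechanism, that carries the load.
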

\begin{proof} We form the tree, $T$, whose vertices are
the components of $\D^b$ around $z$ with
vertices the intersections between them,
and root the tree in the disc component.
We aim to prove by decreasing induction 
on distance (in the graph metric) from the 
root that no component 
meets $B$, except possibly in the point
that corresponds to the vertex through 
which there is the unique path to the
root, and this only if
the disc component
maps to $B$ under $f$ and every
vertex in the path from it to the
component is contracted to $f(z)$.  

To this end, let $R_n\ni z$ be a small
disc such that $V_n=f_n(R_n) \sbs X\bsh B$ 
converge to $V=f(R_z)$. 
Now, quite generally, let us
suppose
that for some $x\in R_z$, $f(x)$
belongs to some component $B_j$. Should
$x\notin \mathrm{sing}(\D^b)$, then
$V_n\ra V$ uniformly close to $f(x)$,
and one argues as in \ref{lem:green},
{\it i.e.} if the component of $R_z\ni x$
is not contained in $B_j$ then the 
intersection of $B_j$ with $f(R_z)$ 
at $f(x)$ is the limit of that
with $R_n$ close to $f(x)$. This
works more generally, to wit:
if for $p\in V$, $c_1,\hdots,c_m$
are the components of $V$ through $p$,
then $p\in B_j$ iff some $c_k\sbs B_j$,
since: otherwise, there is a tubular
neighbourhood $U_j\supset B_j$ such
that,
$$V_n\cap U_j \ra V\cap U_j \in \mathrm{H}^{2\mathrm{d}-2}(U_j)$$
for $d$ the ambient dimension, and the
(nil) intersection number is conserved.

Now consider the graph,
$T_b$ obtained by colouring vertices
contracted to points by $f$ black,
white otherwise. Thus with the possible
exception of the root, all vertices
of valency 1, so, 
in particular
those at maximal distance from the
root,
may be supposed white. 
Observe that it's sufficient to
perform the induction for white
vertices, since otherwise a connected
component of blacks at greater distance mapping to a
point $p\in B_j$ would force some
white vertex to lie in $B_j$ by the
above considerations of local
intersection numbers. As such,
suppose we have a white vertex $v$
such that every white vertex at
a greater distance can at most meet
$B$ in the edge of its unique path
to $v$, and this only if the path
is black. At this point suppose
the set $I$ of boundary components containing
the vertex is non-empty, then for
$j\notin I$, as above, excepting the
edge leading to the root, no point of
the tree from $v$ on, {\it i.e.} at greater distance,
including the black vertices,
can meet $B_j$. Consequently the 
vertex $v$ gives an $\ba^1$ in
the stratum $B_I$, contrary to
our hypothesis.
\end{proof}

To which let us add some definitions
reflecting the different possibilities
that may occur in \ref{fact:misha},
\ref{fact:boundary}, and, \ref{lem:green2},
\begin{defn}\label{defn:bloch} Let $Z$ be a proper (and implicitly
without generic points in $B$) sub-variety
of a complex space $X$, then:

\noindent{\bf (a)} If a 
sequence of maps $f_n : \D \ra X$ of which a subsequence doesn't 
converge in $\ov{\mathrm{Hom}} (\D , X)$ is, in the compact open sense, 
contained in arbitrarily small neighbourhoods of $Z$, then
we say that $X$ is hyperbolic modulo $Z$.

\noindent{\bf (b)} If a sequence 
of maps $f_n : \D \ra X \bsh B$ of which a subsequence doesn't 
converge in $\ov{\mathrm{Hom}} (\D , X\bsh B)$ is, in the compact open sense, 
contained in arbitrarily small neighbourhoods of $Z \cup \cB$,
then we say that $(X,B)$ is hyperbolic modulo $Z$.

\noindent{\bf (c)} Everything as in (b), except that the $f_n$
are arbitrarily close to $Z$, then we say that $(X,B)$ is 
complete hyperbolic modulo $Z$, thus both this
item and (b) encompass (a) for $B=\emptyset$.

\noindent{\bf (d)} This is defined as
the following {\bf infinito} property for $(X,B)$:
every holomorphic map $f:\bc\ra X\bsh B$,
factors through $Z$.
\end{defn}
Now we can start to bring some order to the discussion, 
\begin{fact}\label{fact:nobloch}{\em
Suppose the Bloch principle fails, {\it i.e.} \ref{defn:bloch} (d)
does not imply \ref{defn:bloch} (b) or (c) as appropriate
({\it i.e.} for the moment either is permitted,
and for surfaces, \ref{summaryd:crit}, we'll give
necessary and sufficient algebraic criteria)  
and, \ref{defn:nobubbles}, that bubbles do not form in the boundary,
{\it e.g.} \ref{lem:green2} holds, 
then there
is a sequence $f_n:\D\ra X\bsh B$ of discs, unbounded
according to the Nevanlinna alternative \ref{eq:unbounded},
from some fixed radius on, such that:
\begin{enumerate}
\item[(a)] The origins $f_n(0)$ are bounded away from
$Z\cup B$ if \ref{defn:bloch}.(b) fails, respectively $Z$ if 
\ref{defn:bloch}.(c) fails.   
\item[(b)] For $r\in (0,1)$ outwith a set of finite
hyperbolic measure, and possibly after subsequencing,
any (weak) accumulation point $T$ of the $T_n(r)$ in
\ref{eq:cint} is supported on $Z\cup B$- actually
$Z\cup W$ for $W\sbs B$ the Zariski closure of the
union of
$\bc$'s (not just the excluded $\ba^1$'s) in boundary strata
\`a la \ref{fact:boundary}.
\item[(c)] If furthermore $X$ is projective, we
may suppose, \ref{prop:closed}, that $T$ is closed, and, of course,
by (a) and \ref{fact:cart}, $D\cdot T\geq 0$,
for every effective $\bq$-Cartier divisor, $D$, supported
in $Z\cup B$, should \ref{defn:bloch}.(b) fail,
respectively $Z$, if \ref{defn:bloch}.(c) fails.
\end{enumerate}
}
\end{fact}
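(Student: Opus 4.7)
The plan is to extract the sequence from the failure of the Bloch principle, establish closedness and positivity via \ref{prop:closed} and \ref{ex:compact}, and prove the support statement by contradiction through Duval's theorem applied after a passage from Nevanlinna to Ahlfors data.

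\textbf{Extraction and (a).} The failure of \ref{defn:bloch}.(b) (resp.\ (c)) supplies $f_n:\D\to X\setminus B$ with no subsequence converging in $\ov{\mathrm{Hom}}(\D,X\setminus B)$ whose images are not eventually contained in arbitrarily small neighbourhoods of $Z\cup B$ (resp.\ $Z$). Accordingly some compact $K$ disjoint from $Z\cup B$ (resp.\ $Z$) meets the images of infinitely many $f_n$, and pre-composing with M\"obius transformations we may assume $f_n(0)\in K$ and, subsequencing, $f_n(0)\to x_\infty$ off $Z\cup B$ (resp.\ $Z$), which is (a). If the Nevanlinna area $\nint_\D f_n^*\om$ were bounded, \ref{fact:misha} would produce a limit in $\ov{\mathrm{Hom}}(\D,X)$ without bubble at the origin; since the disc component sends $0$ to $x_\infty\notin B$, it maps to $X\setminus B$, and the no-bubbling-in-boundary hypothesis \ref{defn:nobubbles} forces the bubbles also to avoid $B$. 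The limit therefore lies in $\ov{\mathrm{Hom}}(\D,X\setminus B)$, contradicting non-convergence. Hence the Nevanlinna alternative of \ref{eq:unbounded} is in force.

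\textbf{Closedness and (c).} Projectivity of $X$ together with the convergence of the origins activates \ref{prop:closed}: after further subsequencing and restricting $r$ outside a set of finite hyperbolic measure, the currents $T_n(r)$ of \ref{eq:cnev} converge weakly to a closed positive current $T$. For an effective $\bq$-Cartier divisor $D$ supported in $Z\cup B$ (resp.\ $Z$), fix a smooth metric on $\cO(D)$. By (a), the numbers $\log\Vert\un_D\Vert(f_n(0))$ are uniformly bounded below, so \ref{ex:compact} gives a uniform lower bound on $\nint_{\D_r}f_n^*\mathrm{c}_1(\bar D)$; dividing by the unbounded Nevanlinna area and passing to the limit yields $D\cdot T\geq 0$.

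\textbf{Localisation of support (b) --- the main obstacle.} Suppose by contradiction that $T$ has positive mass on some open $U$ disjoint from $Z\cup W$. Then \ref{fact:almost} produces, for a suitable radius $t$ and after a further subsequence, an Ahlfors current as in \ref{defn:ahlfors} with non-zero mass on $U$. Duval's theorem, recalled in the Introduction, extracts from this a non-constant entire map with bubbles $g:\bc\to X$, at least one of whose irreducible components cuts a compact subset of $U$. Since $g$ and its bubbles arise by Brody-type rescaling of the $f_n\in\mathrm{Hom}(\D,X\setminus B)$, Green's lemma \ref{lem:green} applies component by component, forcing the image of each non-contracted component to lie either entirely in $X\setminus B$ or entirely in some boundary stratum $B_I$. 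A component inside $X\setminus B$ produces an entire map to $X\setminus B$ (the entire part directly, each rational bubble through its canonical $\bc$), which by \textbf{Infinito} (d) factors through $Z$; a component inside $B_I$ contains a copy of $\bc$ in a boundary stratum and so lies in $W$ by the very definition of the latter. Either case contradicts the offending component meeting $U$, which by choice is disjoint from $Z\cup W$. The main technical subtlety is precisely the passage \ref{fact:almost} from the Nevanlinna data defining $T$ back to an Ahlfors current, without which Duval's theorem could not be directly invoked.
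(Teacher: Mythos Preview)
Your treatment of (a), of the Nevanlinna unboundedness, and of (c) is essentially the paper's, only made more explicit; the appeal to \ref{eq:misha}/\ref{fact:misha} for (a) and to \ref{prop:closed} plus \ref{ex:compact} for (c) is exactly what the paper intends.

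The gap is in (b). You write that $T$ having mass on $U$ allows \ref{fact:almost} to produce an Ahlfors current with mass on $U$, to which Duval's theorem then applies. But \ref{fact:almost} only delivers such a current when the auxiliary measure $d\nu^U$ of \ref{eq:Umeasures} has support in $(0,r]$; it says nothing when all the mass of $d\nu^U$ sits at $\{0\}\subset[0,r]$. Indeed, the proof of the ``better still'' clause of \ref{fact:almost} shows that this last case is precisely what occurs for Lebesgue almost all $r$ once the pointwise limit $\phi$ of the area ratios $\phi_n$ of \ref{eq:Uarea} vanishes almost everywhere. You correctly flag the Nevanlinna--to--Ahlfors passage as the main subtlety, but then treat \ref{fact:almost} as if it resolved it completely; it does not.

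The paper devotes more than half of its proof to this residual case. Mass of $d\nu^U$ concentrating at $\{0\}$ forces large $U$--area on arbitrarily small discs about the origin. Either the areas $a_n(t)$ stay bounded for each fixed $t$, so by \ref{eq:morenotation} all the $U$--mass comes from a bubble at the origin, impossible since $f_n(0)$ is bounded away from the bubbling locus by (a); or one introduces radii $\e_n(\a)\to 0$ with $a_n(\e_n(\a))=\a$, analyses the distribution of $U$--area over the rescaled discs $\D_{\e_n(\a)}$, and extracts a \emph{new} family of discs (at a different scale) with unbounded area, definite $U$--fraction, and satisfying the length--area inequality via \ref{claim:lang1}. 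Only then does one obtain an Ahlfors current with mass on $U$ to which \cite{duval} applies. Without this rescaling step your contradiction does not close.

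Your handling of the resulting $\bc$--with--bubbles via \ref{lem:green} component by component is a legitimate alternative to the paper's direct appeal to the no--bubbling hypothesis \ref{defn:nobubbles}, and leads to the same conclusion.
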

\begin{proof}
The fact that we can choose the origins $f_n(0)$
as in (a) follows from \ref{eq:misha}, while
(c) is just a re-statement of things that have
already been proved. As such, if $T$ is
the resulting current, the new statement
is (b), and 
we require to prove that $T$ has no mass
off $Z$ or $Z\cup B$ as appropriate.
To this end,
one takes the $U$ of \ref{eq:Uarea}
to be 
either one of a relatively compact sequence
exhausting
$X\bsh (Z\cup B)$, respectively $X\bsh Z$,
or better replace $\un_U$ in op. cit. by
a continuous $[0,1]$ valued function vanishing
on exactly $Z\cup B$, respectively $Z$.
By \ref{fact:almost}, with notation as
therein, 
as soon as $\dn^U$ has support on $(0,r]$
we find an Ahlfors' current, $A$, with
mass on $U$ and so by
the main theorem
of \cite{duval} 
there is a $\bc$ with bubbles in $X$ meeting
$U$ which arises as a limit of discs
mapping to $X\bsh B$. By hypothesis bubbles cannot form
in the boundary, so, either the $\bc$ goes
into the boundary and the bubbles lie
in $U$ contradicting \ref{defn:bloch}.(d),
or the entire limit lies in $U$ contradicting
\ref{defn:bloch}.(d) again.  

We may, therefore, put ourselves in the
situation of \ref{fact:almost}, and suppose that $\dn^U$ is
supported uniquely in the origin, with, as
per the proof of \ref{fact:almost}, the
limit $\phi$ of the ratios, $\phi_n$, of
\ref{eq:Uarea}, converging to zero Lebesgue
almost everywhere. Equally, we may suppose that
the areas $a_n(t)$ of every compact
$t>0$ are unbounded in $n$, since otherwise,
by \ref{eq:morenotation} the mass is uniquely attributable to a
bubble at the origin, which must belong to
$Z$ or some boundary strata by \ref{defn:bloch}.(d),
and \ref{fact:boundary}, but equally cannot contribute
mass in $U$ since $f_n(0)$ is bounded away
from the bubbling locus. 

As such, given $\a>0$, for $n$ sufficiently
large, there is some $\e_n(\a)$, such that,
$a_n(\e_n(\a))=\a$. Now consider for each $\a$,
$$
\limsup_n\left(\nint_{\D (r)} f_n^*\om\right)^{-1}
\int_0^{\e_n(\a)} \phi_n(t) a_n (t) \frac{dt}{t}
$$
and suppose for some $\a>0$ this is non-zero.
However, for every $x\in (0,1]$,
$$
\int_0^{\e_n(\a)} \phi_n(t) a_n (t) \frac{dt}{t}
=
\int_0^{x\e_n(\a)} \phi_n(t) a_n (t) \frac{dt}{t} 
+
\int_{x\e_n(\a)}^{\e_n(\a)} \phi_n(t) a_n (t) \frac{dt}{t}
$$
and the latter integral is at most $\a\vert\log\vert x\vert\vert$,
so normalising the discs $\D_{\e_n(\a)}$ to radius
1, every compact $\D_x$ converges to a disc with
bubbles with mass in $U$, so by \ref{eq:morenotation},
without loss of generality a non-trivial bubble
uniquely at the origin. We may, however, suppose that $U$
is relatively compact in the complement of $Z$,
so such a thing is impossible.

Before we can profit from the above discussion,
observe that if $E\sbs (0,1)$ is any set of
finite $t^{-1}dt$ measure, then by Ergoff's
theorem the convergence of,
$$
\int_s^r \un_E \phi_n(t) \frac{dt}{t} \ra \int_s^r \un_E \phi(t) \frac{dt}{t}=0 
$$
is uniform in $s\in [0,r]$. Whence if $E'$ is
the complement of such an $E$, $\d>0$, and $\a$
are given, we may suppose that:
$$
\limsup_n\left(\nint_{\D (r)} f_n^*\om\right)^{-1}
\int_{\frac{\e_n(\a)}{\d}}^r 
\left[\frac{\phi_n(t\d) a_n (t\d)}{a_n(t)}\right]
\un_{E'}(t\d)  a_n(t)\frac{dt}{t}
$$
is some $2\eta\in (0,1)$ which is independent of $\d,\a$, and $E$.
In particular, for each $\a$ there are discs of radius
$t_n\in (0,\d r)\bsh E$ of area at least $\a$ and area
in $U$ at least $\eta\a$. Passing to a sub-sequence
indexed by $n=n(\a)$ with 
$\a$ growing rapidly, we 
can as in \ref{claim:lang1} choose $E$ a priori, so
that:
$$
ta_n'(t) \leq {(a_n)^{1+\epsilon}},\,\, \forall\, t\in(\e_n(\a), \d r)\bsh E  
$$
for some fixed $\epsilon\in (0,1)$, and since the length of
the boundary is bounded by the square root of the
left hand side, the discs $\D_{t_n}$ again yield
an Ahlfors current with mass in $U$, and 
we conclude once more by \cite{duval}.
\end{proof}

\subsection{Tautological Inequality}\label{SS:taut}

An appropriate level of generality that covers
both \ref{eq:green} and \ref{sch:systems} is
simply to replace the Dirac delta at $p$, by
a finite sum $\d_P:=\sum_i w_i \d_{p_i}$, $w_i >0$,
of total mass 1 in \ref{eq:green}, and we return
to the notation of \ref{SS:area} for the implied
exhaustion of a (regularly bordered) Riemann-surface by $g$.
Thus we have local functions $z_i$ around $p_i$
such that \ref{eq:z} holds for every $i$ on multiplying
the right hand side by $w_i$, while
the Euler characteristic in the
Nevanlinna sense of \ref{exd:euler} reads,
\begin{equation}\label{eq:euler}
E_\Si(r):=\frac{1}{2}\cdot \int_{-\infty}^0 \chi(\Si_t\bsh \vert P\vert) \, 
dt \,+\, \sharp(P) \frac{r}{2}
\end{equation}
for $\vert P\vert$ the support of $P$, and $\sharp(P)\in\bn$ its
cardinality. The example \ref{exd:euler} may
be generalised as follows:

\begin{fact}\label{fact:taut1} {\em Let $(X,\om_X)$ be a
complex space, or for that matter analytic champ
de Deligne-Mumford, with metric, and $\ov L$ the
resulting metricisation of the tautological 
bundle on $\bp(\Om^1_X)$ (EGA notation) then for
$f:\Si\ra X$, with derivative, 
$f':\Si\ra \bp(\Om^1_X)$,
unramified at $P$,
$$
\nint_{\Si_r} f^* \mathrm{c}_1(\ov{L}) 
+ E_\Si(r) + \mathrm{Ram}_f (r) + 
\sum_i w_i\log\frac{1}{w_i}\Vert f_*\frac{\pa}{\pa z_i}\Vert 
(p_i) 
=
\int_{\pa\Si_r} \log \Vert\pa\Vert_{\om_X} 
\dc g 
$$
for $z_i$ as above, and
for $\mathrm{ram}_f$ the ramification divisor,
$$
\mathrm{Ram}_f (r) \,:=\, \frac{1}{2}\cdot \int_{-\infty}^r \sharp \left(
\mathrm{ram}_f\cap \Si_t\right)
$$
}
\end{fact}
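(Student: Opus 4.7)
The plan is to generalise \ref{exd:euler} by replacing the Poincar\'e metric on $\Si$ with the pull-back via $f'$ of the tautological metric on $\bp(\Om^1_X)$, and the single origin $p$ of weight $1$ by the weighted finite set $P=\sum_iw_ip_i$; the additional ramification term will arise because $f'$ is only assumed unramified at $P$, not throughout $\Si$. Concretely, I would apply the (evident) multi-point weighted analogue of \ref{fact:cart} to the line bundle $(f')^*\ov L$ on $\Si$, equipped with the meromorphic section
\[
\sigma:=df(\pa),
\]
where $\pa$ is the holomorphic vector field on $\Si$ dual to the meromorphic $1$-form $\pa g$.

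The first step is to read off the divisor of $\sigma$. Near $p_i$ the expansion $\pa g=w_i\,dz_i/z_i+O(1)$ gives $\pa=(z_i/w_i)\,\pa/\pa z_i+O(|z_i|^2)$, so $\pa$ has a simple zero at each $p_i$; elsewhere the zeros and poles of $\pa$ are exactly the poles and zeros of $\pa g$, the latter being the critical locus $K$ of $g$. Schwarz reflection in $\pa\Si_t$ together with Poincar\'e--Hopf applied to the meromorphic $1$-form $\pa g$ on the closed double of $\Si_t$ will then yield
\[
|K\cap\Si_t|=-\chi(\Si_t\setminus|P|),
\]
while the hypothesis that $f'$ is unramified at $P$ ensures
\[
\mathrm{div}(\sigma)=\mathrm{ram}_f+|P|-K
\]
as a divisor in $\Si_r$.

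With this in hand, the multi-point weighted extension of \ref{fact:cart}---whose proof is the same integration-by-parts argument as in the single-point case, modulo the use of several local uniformisers $z_i$ of different weights $w_i$, so that the ``$\mathrm{ord}_0(D)\cdot r/2$'' term becomes the unweighted sum $\sum_i\mathrm{ord}_{p_i}(D)\cdot r/2$, while the limit at each $p_i$ acquires the weight $w_i$ from the Dirac mass of $\ddc g$ there---applied to $((f')^*\ov L,\sigma)$ will assemble the four pieces of the claim. Namely, the ramification component of the divisor sum contributes $\mathrm{Ram}_f(r)$; the $|P|$- and $-K$-components combine, via $-\nint_{\Si_r}[K]=\tfrac{1}{2}\int_{-\infty}^r\chi(\Si_t\setminus|P|)\,dt$, to give $E_\Si(r)$; the limit term at each $p_i$ computes, using $\|\sigma\|/|z_i|\to\|f_*\pa/\pa z_i\|(p_i)/w_i$, to $w_i\log(\|f_*\pa/\pa z_i\|(p_i)/w_i)$; and the boundary integral $-\int_{\pa\Si_r}\log\|\sigma\|\,\dc g$, on moving to the right and invoking the convention $\|\pa\|_{\om_X}:=\|f_*\pa\|_{\om_X}=\|\sigma\|_{\om_X}$, yields the stated right-hand side. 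The main obstacle is thus the weighted multi-point extension of \ref{fact:cart}; the boundary measure $\dc g|_{\pa\Si_r}$ has total mass one regardless of the weighting so the boundary term should go through verbatim, whereas the near-origin contributions have to be tracked carefully to confirm the appearance of $w_i$ in precisely the limit term.
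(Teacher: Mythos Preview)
Your proposal is correct and is precisely the integration by parts the paper performs: the paper's function $|F|$ defined by $f^*\om_X=|F|^2\,dg\,d^cg$ is nothing other than your $\|\sigma\|=\|f_*\pa\|_{\om_X}$, and ``integrate by parts'' is exactly the multi-point weighted variant of \ref{fact:cart} you describe. Your unpacking of the divisor of $\sigma$ into $\mathrm{ram}_f+|P|-K$ and the resulting identification of the $E_\Si(r)$, $\mathrm{Ram}_f(r)$, origin, and boundary contributions is the content the paper leaves implicit.
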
 
\begin{proof} Define a function $\vert F\vert$ by,
$$
f^*\om_X = \vert F\vert^2 dgd^c g
$$
then $f^* \mathrm{c}_1(\ov{L})$ is $\ddc\log \vert F\vert^2$
Lebesgue a.e. on $\Si$ and one integrates by parts.
\end{proof}

Now the right hand side of \ref{fact:taut1} certainly
admits the bound,
\begin{equation}\label{eq:length} 
\log\bigl[(\mathrm{length}_{\om_X})(\pa\Si_r)\bigr]
\end{equation}
which if $X$ is compact is wholly negligible in
practice by \ref{claim:lang1}. However, if we
pass to a divisorial boundary $(X,B)$ \`a la \ref{eq:boundary},
but $X$ smooth at every point of $B$ with the latter
simple normal crossing divisor, so that $X$ of
\ref{fact:taut1} becomes $X\bsh B$, then the estimability
of \ref{eq:length} is wholly dependent on how the
(Nevanlinna) area computed in $\om_{X\bsh B}$ compares with that
computed in a smooth metric $\om_X$. For example,
if $x_1\hdots x_p=0$ is a local equation for the
boundary and $y_j$ the other coordinates then
a metric of the form,
\begin{equation}\label{eq:fucked}
\sum_i \frac{\ddc\vert x_i\vert^2}{\vert x_i\vert^2}
+ \sum_j \ddc\vert y_j\vert^2
\end{equation}
on $X\bsh B$, affords an area that has no a priori relation
with the same on $X$. On the other hand if we take
a complete metric on $X\bsh B$, {\it i.e.} everywhere locally commensurable
to,
\begin{equation}\label{eq:complete1}
\sum_i \frac{\ddc\vert x_i\vert^2}{\vert x_i\vert^2\log^2 \vert x_i\vert^2}
+ \sum_j \ddc\vert y_j\vert^2
\end{equation}
Then there is a constant $N>0$ such that,
\begin{equation}\label{eq:complete2}
\om_X \leq \om_{X\bsh B} \leq N\om_X - \sum_i \ddc\log\log^2\Vert\un_{B_i}\Vert^2
\end{equation}
supposing as we may that the sup of any $\Vert\un_{B_i}\Vert^2$ is
at most $e^{-1}$. Consequently for any $f:\Si\ra X$ which we allow to
meet $B$, but say $P\cap f^{-1}(B)$ for convenience, an
integration by parts together with our bound on the distance
to the $B_i$ gives,
\begin{equation}\label{eq:complete3}
\nint_{\Si_r} f^*\om_{X\bsh B} \leq
 N \nint_{\Si_r} f^*\om_{X} 
+ \log\left\vert\log\Vert\un_{B_i}\Vert^2 \right\vert (\d_P)
\end{equation}
so that controlling the distance of $P$ to $B$ gives
control on the area computed in the complete metric,
from which, the utility of:
\begin{fact}\label{fact:taut2}
{\em Let $(X,B)$ be compact with smooth simple normal crossing
boundary $B$, and $L^c$ a metricisation of the tautological
bundle of $\bp(\Om^1_X(\log B))$ resulting from
a complete metric  $\om_{X\bsh B}$ on $X\bsh B$, then
for any $f:\Si\ra X$ with logarithmic derivative,
$f':\Si\ra \bp(\Om^1_X(\log B))$, unramified at $P$,
and $f(P)$ missing $B$,
\begin{equation*}
\begin{split}
\nint_{\Si_r} f^* \mathrm{c}_1(L^c) 
+ \, & E_\Si(r) \,+\, \mathrm{Ram}^B_f (r)\, + 
\sum_i w_i\log\left[\frac{1}{w_i}\Vert f_*\frac{\pa}{\pa z_i}\Vert_{\om_{X\bsh B}}\right] 
(p_i) 
=\\
& \mathrm{Rad}^B_f (r)\, + \int_{\pa\Si_r} \log \Vert\pa\Vert_{\om_X} 
\dc g 
\end{split}
\end{equation*}
for $z_i$, $\mathrm{Ram}^B_f (r)$  as in \ref{fact:taut1}, 
except that one integrates the part
$\mathrm{ram}^B_f$ of the ramification divisor which
is not supported in $B$, while for $\mathrm{rad}^B_f$
the divisor $f^{-1}(B)$ counted without multiplicity,
$$
\mathrm{Rad}^B_f (r) \,:=\, \frac{1}{2}\cdot \int_{-\infty}^r \sharp \left(
\mathrm{rad}^B_f\cap \Si_t\right)
$$
}
\end{fact}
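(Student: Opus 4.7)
The plan is to follow the proof of Fact~\ref{fact:taut1} verbatim, the only genuinely new content being a local analysis of the singularities of the pullback of the complete metric at points of $f^{-1}(B)$.

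Concretely, define $|F|^2$ on $\Sigma$ by $f^*\omega_{X\setminus B} = |F|^2 \, dg \, d^c g$, a non-negative measurable function off the critical locus of $g$. Since $L^c$ is the tautological metricisation of the universal quotient of $\Omega^1_X(\log B)$ induced by the complete metric $\omega_{X\setminus B}$, one has $(f')^* c_1(L^c) = dd^c \log |F|^2$ as smooth forms away from the singular locus of $|F|^2$; the identity to be proved then amounts to applying the Nevanlinna integration-by-parts underlying Fact~\ref{fact:cart} to $\log |F|^2$ and carefully enumerating the atomic contributions to $dd^c\log|F|^2$.

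Away from $f^{-1}(B)$, the log derivative coincides with the ordinary derivative, so the divisorial support of $dd^c\log|F|^2$ is exactly $\mathrm{ram}^B_f$, producing the $\mathrm{Ram}^B_f(r)$ term on the LHS. Since $f(P)\cap B = \emptyset$, the complete metric $\omega_{X\setminus B}$ is commensurate with $\omega_X$ on a neighbourhood of $f(P)$, so the point terms $\sum_i w_i \log\bigl[w_i^{-1}\|f_*\partial/\partial z_i\|_{\omega_{X\setminus B}}\bigr](p_i)$, the Euler-characteristic correction $E_\Sigma(r)$, and the boundary integral $\int_{\partial\Sigma_r}\log\|\partial\|_{\omega_X}\, d^c g$ arise exactly as in Fact~\ref{fact:taut1}.

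The new step is the local analysis at a point $q\in f^{-1}(B)$. Choose coordinates $(x_1,\ldots,x_p,y_j)$ near $f(q)$ with $B=\{x_1\cdots x_p=0\}$ and a coordinate $z$ on $\Sigma$ centred at $q$, and write $f^*x_i = z^{m_i} u_i$ for units $u_i$ and for those $i$ with $f(q)\in B_i$. By \ref{eq:complete1} the singular part of $f^*\omega_{X\setminus B}$ near $q$ equals
$$
\sum_{i\,:\, f(q)\in B_i} \frac{|d(f^*x_i)|^2}{|f^*x_i|^2 \log^2|f^*x_i|^2}
\;\sim\; \sum_{i\,:\, f(q)\in B_i} \frac{|dz|^2}{|z|^2 \log^2|z|^2},
$$
the crucial cancellation being that the numerator contributes $m_i^2 |dz|^2/|z|^2$ while the denominator contributes $m_i^2 \log^2|z|^2 + O(\log|z|)$. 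Consequently $\log|F|^2 = -\log|z|^2 - 2\log|\log|z|^2| + O(1)$ near $q$, so the Lelong number of $dd^c\log|F|^2$ at $q$ is $-1$, independent of the multiplicities $m_i$ and of the number of branches of $B$ through $f(q)$, while $dd^c\log|\log|z|^2|$ is integrable and contributes no residue. Summing these $-1$ residues over $q\in f^{-1}(B)\cap\Sigma_r$ and applying the Nevanlinna transform produces a term $-\mathrm{Rad}^B_f(r)$ on the LHS, which transposes to $+\mathrm{Rad}^B_f(r)$ on the RHS. The main obstacle is precisely this multiplicity-free cancellation, and once it is in hand the identity falls out by collecting all contributions exactly as in the proof of Fact~\ref{fact:taut1}.
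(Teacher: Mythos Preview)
Your proposal is correct and follows exactly the approach the paper intends: the paper's own proof is the one-liner ``Exactly as per \ref{fact:taut1}'', i.e.\ define $|F|$ via $f^*\omega_{X\setminus B}=|F|^2\,dg\,d^cg$ and integrate by parts. You have simply (and usefully) unpacked the local computation at $q\in f^{-1}(B)$ showing that the $m_i$'s cancel in $|dz|^2/(|z|^2\log^2|z|^2)$, which is precisely the content behind the multiplicity-free $\mathrm{Rad}^B_f$ term that the paper leaves implicit.
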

\begin{proof} Exactly as per \ref{fact:taut1}.\end{proof}
While \ref{fact:taut2} together
with the estimate \ref{eq:complete3} is
the most accurate reflection of the geometry,
it's not exactly admissible to apply something
like \ref{prop:closed} to the derivatives with
the complete metric rather than a smooth metric,
$\ov L$. Fortunately, a comparison is quite
easy, since:
\begin{equation}\label{compare}
\mathrm{c}_1(\ov{L}) = \mathrm{c}_1(L^c) + \ddc(\psi),
\,\,\, \text{where,}\,\,\, 0\leq \psi \leq \log\log^2\Vert\un_B\Vert^2
\end{equation}
and so we obtain,
\begin{fact}\label{fact:taut3}{\em
Let everything be as in \ref{fact:taut2} with $\ov L$ as above, then
for a constant $C$ depending only on the metrics:
$$
-\log\left\vert \log\Vert\un_B\Vert^2\right\vert(\d_P)\leq 
\nint_{\Si_r} (\mathrm{c}_1(\ov{L}) - \mathrm{c}_1(L^c))
\leq \log [ \vert \log\Vert\un_B\Vert^2\vert(\d_P) + C\nint_{\Si_r}\om_X]
$$
}
\end{fact}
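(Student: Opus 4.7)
The plan is to apply the basic Nevanlinna--Jensen identity underlying \ref{fact:cart} twice: first to the potential $\tilde{\psi} := (f')^{*}\psi$, and then to the plurisubharmonic function $-\log\Vert\un_{B}\Vert^{2}$, in order to convert the difference of Chern forms into the two boundary terms that appear on the right-hand side. By \ref{compare}, $\tilde{\psi}$ is non-negative and pointwise dominated by $\log\log^{2}\Vert f^{*}\un_{B}\Vert^{2} = 2\log\bigl\vert\log\Vert f^{*}\un_{B}\Vert^{2}\bigr\vert$, and, since $f(\vert P\vert)\cap B = \emptyset$, it is smooth near the points of $P$. Integrating by parts against $\ddc g = \d_{P}$ gives
\[
\nint_{\Si_{r}}\ddc\tilde{\psi} \;=\; \int_{\partial\Si_{r}} \tilde{\psi}\,\dc g \;-\; \tilde{\psi}(\d_{P}),
\]
with $\dc g$ a probability measure on $\partial\Si_{r}$.

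The lower bound is now immediate: drop the non-negative boundary integral and apply the pointwise bound at $\d_{P}$, absorbing the factor $2$ coming from $\log\log^{2} = 2\log\vert\log\vert$ into the (implicit) normalisation of $\psi$ in \ref{compare}. For the upper bound, discard $-\tilde{\psi}(\d_{P})\leq 0$ and invoke Jensen's inequality, i.e.\ the concavity of $\log$, against the probability measure $\dc g$:
\[
\int_{\partial\Si_{r}} \tilde{\psi}\,\dc g \;\leq\; 2\int_{\partial\Si_{r}} \log\bigl\vert\log\Vert f^{*}\un_{B}\Vert^{2}\bigr\vert\,\dc g \;\leq\; 2\log\!\int_{\partial\Si_{r}}\bigl\vert\log\Vert f^{*}\un_{B}\Vert^{2}\bigr\vert\,\dc g,
\]
so that everything is reduced to an $L^{1}$ estimate for $\vert\log\Vert f^{*}\un_{B}\Vert^{2}\vert$ on the boundary.

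That last estimate is exactly what a second application of \ref{fact:cart} yields, this time with the divisor $B$ and its given metric. Since the metric may be normalised to $\Vert\un_{B}\Vert\leq e^{-1}$ we have $\vert\log\Vert\un_{B}\Vert^{2}\vert = -\log\Vert\un_{B}\Vert^{2}$, and Poincar\'e--Lelong's identity $\ddc(-\log\Vert\un_{B}\Vert^{2}) = \mathrm{c}_{1}(\bar{B}) - 2[B]$ pulls back via $f$ to give
\[
\int_{\partial\Si_{r}}\bigl\vert\log\Vert f^{*}\un_{B}\Vert^{2}\bigr\vert\,\dc g \;=\; \bigl\vert\log\Vert\un_{B}\Vert^{2}\bigr\vert(\d_{P}) \;+\; \nint_{\Si_{r}} f^{*}\mathrm{c}_{1}(\bar B) \;-\; 2\nint_{\Si_{r}}[f^{-1}B],
\]
the last (divisor-counting) term being non-negative and hence discardable. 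Compactness of $X$ gives a constant with $\mathrm{c}_{1}(\bar B)\leq C\om_{X}$ as $(1,1)$-forms, which on substitution produces the asserted right-hand side after renaming constants and (as before) absorbing a factor $2$ in the outer logarithm into $C$. The only step worth watching is the direction of Jensen's inequality: it must be applied to $\log\vert\log\vert$ rather than to $\vert\log\vert^{2}$, since the former converts the average of logarithms into a logarithm of an average, whereas the latter would place us on the wrong side of the inequality.
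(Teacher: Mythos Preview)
Your proof is correct and follows essentially the same route as the paper's own proof: integrate $\ddc\psi$ by parts, use positivity of $\psi$ for the lower bound, and for the upper bound apply Jensen's inequality to pass from $\int_{\partial\Si_r}\log\vert\log\Vert\un_B\Vert^2\vert\,\dc g$ to $\log\int_{\partial\Si_r}\vert\log\Vert\un_B\Vert^2\vert\,\dc g$, then invoke \ref{fact:cart} to control the latter. The only caveat is your remark that the factor $2$ coming from $\log\log^2 = 2\log\vert\log\vert$ can be ``absorbed into $C$'': strictly speaking $2\log(a+b)$ is not of the form $\log(a+C'b)$, so the stated inequality is literally off by this harmless factor of $2$ outside the logarithm --- but the paper's own proof is equally informal on this point, and it is irrelevant for every application.
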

\begin{proof} One integrates by parts, and the lower
bound is the easier one, with estimation exactly as
in \ref{eq:complete3}, while an upper bound is,
$$
\int_{\pa\Si_r} \log \left\vert \log\Vert\un_B\Vert^2\right\vert d^c g
\leq \log \int_{\pa\Si_r} \left\vert\log\Vert\un_B\Vert^2\right\vert d^c g
$$
and one concludes by \ref{fact:cart}.
\end{proof}
We conclude the pre-liminaries,
by observing that whether in \ref{fact:taut1}
or \ref{fact:taut2} we require some control
on the derivative at the origin, which we 
do by:
\begin{lem}\label{lem:Dorigin}
 Suppose a sequence $\{ f_n \} \in \mathrm{Hom} (\D , X)$ 
is given which is not eventually arbitrarily close 
(in the compact open sense)
to a Zariski closed subset 
$Z$ of $X$, and that no subsequence of $\{ f_n \}$ converges then after 
subsequencing there is a convergent sequence $\a_n \in \mathrm{Aut} (\D)$ such that $\mathrm{dist
} 
(\a_n^* \, f_n(0),Z)$ and $\left\Vert (\a_n)_* \, (f_n)_* \left( 
\frac{\partial}{
\partial z} \right) \right\Vert_{\om_X} (0)$ are bounded away from zero independently of 
$n$.
\end{lem}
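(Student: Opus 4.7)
The plan is to produce $\a_n$ in two stages: first by a convergent Möbius transformation that moves $0$ to a point whose $f_n$-image is known to lie at positive distance from $Z$; then, should the derivative there still be too small, by composing with a second Möbius transformation manufactured via Brody-type rescaling, resolving the subtlety that the Brody maximiser may a priori collapse into $Z$. To set this up, the hypothesis ``not eventually arbitrarily close to $Z$'' furnishes, after subsequencing, a compact $K_{0}\sbs\D$, a constant $\epsilon_{0}>0$, and points $z_{n}\in K_{0}$ with $\mathrm{dist}(f_{n}(z_{n}),Z)\geq\epsilon_{0}$. Let $\sigma_{n}\in\Aut(\D)$ send $0\mpo z_{n}$: since $z_{n}\in K_{0}$, the $\sigma_{n}$ lie in a compact of $\Aut(\D)$, so a further subsequencing yields $\sigma_{n}\ra\sigma$; put $g_{n}:=f_{n}\circ\sigma_{n}$, so $g_{n}(0)=f_{n}(z_{n})$ stays at distance $\geq\epsilon_{0}$ from $Z$. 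If along a subsequence $\Vert g_{n}'(0)\Vert_{\om_{X}}$ is already bounded away from $0$, set $\a_{n}:=\sigma_{n}$ and stop.

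Otherwise $\Vert g_{n}'(0)\Vert_{\om_{X}}\ra 0$. The convergence of $\sigma_{n}$ transfers the non-existence of a convergent subsequence from $f_{n}$ to $g_{n}$, and Arzel\`a--Ascoli applied to maps into (compact) $X$ forces the hyperbolic derivative $\phi^{g_{n}}(z):=(1-|z|^{2})\Vert g_{n}'(z)\Vert_{\om_{X}}$ to be unbounded on some compact $K\sbs\D$, contained in $\bar{\D}(r)$ for some $r<1$. Extract $\zeta_{n}\in K$ with $\phi^{g_{n}}(\zeta_{n})\ra\ify$ and, by compactness of $X$, with $g_{n}(\zeta_{n})\ra y^{*}\in X$. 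When $y^{*}\notin Z$, the distance of $g_{n}(\zeta_{n})$ to $Z$ is eventually bounded below, and setting $\a_{n}:=\sigma_{n}\circ\tau_{n}$, where $\tau_{n}\in\Aut(\D)$ sends $0\mpo\zeta_{n}$, concludes the argument: $\a_{n}$ converges, $(f_{n}\circ\a_{n})(0)=g_{n}(\zeta_{n})$ stays off $Z$, and the $\om_{X}$-norm of its derivative at $0$ equals $\phi^{g_{n}}(\zeta_{n})\ra\ify$.

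The real obstacle is thus the remaining case $y^{*}\in Z$, which I expect to be the main point of the proof. Here I work on the Euclidean segment $\gamma(t):=t\zeta_{n}$, $t\in[0,1]$. The function $t\mpo\mathrm{dist}(g_{n}(\gamma(t)),Z)$ starts at $\geq\epsilon_{0}$ and ends below $\epsilon_{0}/2$ for $n$ large, so admits a first time $t_{n}\in(0,1)$ at which it equals $\epsilon_{0}/2$. The reverse triangle inequality gives $\mathrm{dist}(g_{n}(0),g_{n}(t_{n}\zeta_{n}))\geq\epsilon_{0}/2$, whence the $\om_{X}$-length along $\gamma\vert_{[0,t_{n}]}$, which equals $|\zeta_{n}|\int_{0}^{t_{n}}\Vert g_{n}'(t\zeta_{n})\Vert_{\om_{X}}\,dt$, is at least $\epsilon_{0}/2$. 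By the mean value inequality,
$$
\max_{t\in[0,t_{n}]}\Vert g_{n}'(t\zeta_{n})\Vert_{\om_{X}}\,\geq\,\frac{\epsilon_{0}}{2|\zeta_{n}|t_{n}}\,\geq\,\frac{\epsilon_{0}}{2r}.
$$
Taking $s_{n}\in[0,t_{n}]$ to realise this maximum and $w_{n}:=s_{n}\zeta_{n}$, the point $w_{n}$ lies in $K$ (so $(1-|w_{n}|^{2})\geq 1-r^{2}$), satisfies $\mathrm{dist}(g_{n}(w_{n}),Z)\geq\epsilon_{0}/2$ since $s_{n}\leq t_{n}$, and $\phi^{g_{n}}(w_{n})\geq(1-r^{2})\epsilon_{0}/(2r)>0$. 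Then $\a_{n}:=\sigma_{n}\circ\mu_{n}$ with $\mu_{n}\in\Aut(\D)$ sending $0\mpo w_{n}$ finishes the proof: convergent because $w_{n}\in K$, the origin is away from $Z$, and the derivative norm $\phi^{g_{n}}(w_{n})$ is uniformly positive. The whole weight of the argument rests on this last paragraph: the intermediate-value coupling between the distance-to-$Z$ function and the length-area estimate is what salvages a point both away from $Z$ and of non-trivial dilation, precisely when the Brody prescription has pushed the natural candidate into $Z$.
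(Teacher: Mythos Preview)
Your proof is correct and shares the paper's key geometric idea: an intermediate-value argument on $t\mapsto\mathrm{dist}(g_n(\gamma(t)),Z)$ along a Euclidean segment, followed by a mean-value bound on arc length, to locate a point simultaneously at positive distance from $Z$ and of uniformly positive derivative. The paper's version is more direct in that it swaps the roles of your two endpoints (it first moves the origin to a point of large derivative and then runs the segment from a point $u_n$ with $\mathrm{dist}(f_n(u_n),Z)\geq 2\epsilon$ back toward the origin), thereby avoiding your preliminary case analysis; one small slip in your write-up is that $w_n=s_n\zeta_n$ need only lie in $\bar\Delta(r)$, not in $K$, but since the estimate $(1-|w_n|^2)\geq 1-r^2$ only uses $|w_n|\leq r$ this is harmless.
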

\begin{proof}
 By hypothesis there is a disc of radius $r < 1$, with $f_n 
(\ov{\D}_r) \not\sbs U$ for $U$ some open neighbourhood of $Z$, and $\vert d 
f_n \vert_{\mathrm{euc}}$ large somewhere on $\ov{\D}_r$. Provided we simply take 
automorphisms that move the origin to somewhere in $\ov{\D}_r$ these will converge 
after subsequencing, so we might as well say that $\vert d f_n \vert_{\mathrm{euc}} 
(0)$ is as large as we like. So consider a connected component $U'_n$ of points in 
${\D}_r$ which are a suitable distance $\e$ away from $Z$, which contains 
an open subset of points $U''_n$ of points a distance $2\e$ away.  
Now choose $u_n\in U''_n$, and consider the ray $R\ni u_n$ of constant
argument joining $u_n$ to the origin.
Without loss of generality, $0\notin U'_n$, so
the connected
component, $\g$, of $U'_n\cap R$ containing $u_n$ has
some boundary (going towards the origin) at $v_n$
with $\mathrm{dist} (f_n (v_n) , Z) = \e$. In 
particular $\mathrm{dist} (f_n (u_n) , f_n(v_n)) > \e$, so:
$$
\e \leq \int_\g \Vert (f_n)_*(\dz)\Vert_{\om_X} \vert dz\vert
$$
and we find points in $\g$ ($\sbs U'_n$ by definition) of derivative at least $\e/r$. 
\end{proof}

\section{Algebraic surfaces}\label{S:surfaces}

\subsection{Minimal models}\label{SS:mmp}

The log-minimal model programme for quasi-projective
surfaces will afford optimal conditions for 
avoiding bubbles in the boundary as required
by \ref{fact:nobloch}. Starting from a smooth
projective surface $S$ with simple normal crossing
boundary $B$, one runs the minimal model programme
for $K_S+B$. The result $\rho : (S,B) \ra (S' , B')$
either has $K_{S'}+B'$ nef., or $S\bsh B$ is
covered, \cite{keelmck}, by $\ba^1$'s, and
is the opposite of what we're interested in.
At the initial stage in the programme any
$-1$ curve in $S$ curve that does not meet $B$
can always be contracted, so, we may simplify
the discussion by supposing that there are
no such. As a result an initial move in the programme
must involve a curve that meets the boundary,
and it always reduces the obstruction to
bubbling in the boundary.
Such curves need not, however, be contained
in the boundary, so $\rho^{-1}(B')$ can be
strictly bigger than $B$. Further, our notation,
\ref{eq:boundary} only ever permits divisorial
boundaries, and it can perfectly well happen
that a connected component of $B$ is contracted to a
point. In this latter case the correct thing
to do is to fill the point, and omit it from
the boundary. Indeed if, for example, $B$ were a -1 curve
so that $S'$ would be the contraction of the
same to a point $p$, then by \cite{kobcodim2}
discs converge in $S'\bsh p$ iff they converge
in $S'$, but a property such as \ref{defn:bloch}.(d)
for $S'\bsh p$ does not imply the same for
$S'$. The behaviour of discs in the general case of a boundary
component contracting to a point is wholly
identical up to the action of a finite
group, {\it i.e.} the resulting point
$p\in S'$ is an isolated quotient singularity,
and discs converge in $S'\bsh p$ iff they converge
in $\cS'_p$, where $\cS'_{p}\ra S'$ is the minimal
champ de Deligne-Mumford over $S'$ which is smooth
at $p$, or, in a perhaps more familiar language,
the filling at $p$ is by way of an orbifold point.

On the other hand, $(S', B')$, or better
a yet smaller object, the canonical model,
is where the natural geometry of the pair
$(S,B)$ is to be found, {\it e.g.} K\"ahler-Einstein
metrics are metrics on the canonical model,
and whence on $(S,B)$ reflect both the
above phenomenon, {\it i.e.} they are not
complete at components of $B$ which are
 contracted to quotient singularities,
but they will be complete around certain
divisors in the interior $S\bsh B$ whenever
$\rho^{-1}(B')$ is strictly bigger than
$B$. As a result, hyperbolicity questions
about smooth projective surfaces $S$ with
simple normal crossing divisor $B$ are
somewhat ill posed. Nevertheless,   
the canonical model, naturality
notwithstanding, may not be right for the
hyperbolicity problem that one wishes to
understand, and so we make:

\begin{factd}\label{factd:vistoli} 
{\em In the first place if $X$ is a normal variety, or even just
complex space, with quotient singularities,
then there is a unique, \cite{angelo}, champ de Deligne-Mumford, $\cX$,
the Vistoli covering champ,  
on $X$ ({\it i.e.} with the same moduli) such
that $\cX$ is smooth and $\cX\ra X$ almost \'etale.
Thus, if $(S',B')$ is the minimal 
model of some quasi-projective algebraic 
surface $(S,B)$, there is such a champ
$\cS'\ra S'$, a general pair $(S'',B'')$ 
with log-canonical singularities may, however,
have an isolated set $P\sbs S''$ 
of elliptic Gorenstein singularities (which can
only occur on contracting a connected component
of $B'$ which is numerically elliptic) so there
is only a champ $\cS''$ on $S''$
equal to the Vistoli covering champ (whence smooth) 
over
$S''\bsh P$, and an isomorphism around $P$
otherwise. 
In the case that $(S'',B'')$ is the canonical model,
we also distinguish an intermediate
object $(S',B') \ra (S_0,B_0)  \ra (S'',B'')$,
which will be said to have a canonical
boundary, if it is obtained from $S'$ by modifying 
in the boundary alone, and every 
generic point of $B_0$ and $B''$ coincide, so, in particular
it has the same set, $P$, of elliptic Gorenstein
as $S''$, and so there is a champ $\cS_0\ra S_0$
with the same properties.
In all cases the boundaries, $\cB'$, $\cB''$, and $\cB_0$
in the respective champ are,
\'etale locally, simple normal crossing, so,
inter alia $P\cap B_0=\emptyset$.}
\end{factd}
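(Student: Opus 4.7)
The plan is to treat this essentially as a gathering of classical facts, the non-trivial content of which is a careful bookkeeping rather than a new proof. First I would dispose of the existence and uniqueness of the Vistoli covering champ. This is purely local at each quotient point $x\in X$: writing $(X,x) = (Y,0)/\G$ for $Y$ smooth and $\G$ a small finite subgroup of $\mathrm{GL}(T_0 Y)$ acting freely off the origin, one declares the chart at $x$ to be $[Y/\G]$; the smallness of $\G$ makes the resulting champ unique among smooth almost \'etale covers of $X$ with the same moduli, and Vistoli's descent (cited in the statement) glues these charts. The ``almost \'etale'' claim is then tautological.

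Next I would recall the dichotomy in the classification of surface log-canonical singularities: either the discrepancies on a log resolution are strictly greater than $-1$, giving a quotient singularity (log-terminal), or some discrepancy equals $-1$, and for Gorenstein coefficient the exceptional locus is a cycle of rational curves or a smooth elliptic curve -- what we call elliptic Gorenstein. The point to flag is that in running the log-MMP on $(S,B)$ the only mechanism by which an elliptic Gorenstein appears is the contraction of a connected component $C$ of the boundary whose intersection matrix is negative semi-definite of co-rank one in the Shioda-Tate sense, equivalently whose $K+B$ restricts to numerically zero; since $C$ is thereby collapsed, its generic points disappear from $B'$ and hence from $B''$, whence $P$ is a finite set of interior points of $S''$ disjoint from every generic point of $B''$.

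For the intermediate object $(S_0,B_0)$ I would simply unwind the definition: one is asking for a morphism $(S',B')\ra (S_0,B_0)$ obtained by performing only those contractions of $S'\ra S''$ that are supported in $B'$, so that $S_0$ is the result of contracting the numerically $K+B$-trivial boundary components (creating the set $P$) but declining to perform any contraction of an interior $K+B$-trivial curve. Such a factorisation exists by the cone theorem applied component-wise, is unique up to unique isomorphism, and by construction $B_0$ has the same generic points as $B''$; in particular $P\cap B_0=\emptyset$ by the previous paragraph. The Vistoli covering champ $\cS_0\ra S_0$ then exists for exactly the same reason as $\cS''\ra S''$, namely because $S_0\bsh P$ has only quotient singularities.

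The main -- and only mildly delicate -- obstacle is the final \'etale-local simple normal crossing assertion for the boundaries in the champ. The plan is to verify this chart by chart: at a smooth point of the moduli it is automatic from the SNC hypothesis on $\tilde B$ propagated through the MMP; at a quotient point the canonical local cover $Y\ra Y/\G$ pulls the boundary $B$ back to a $\G$-invariant divisor on the smooth $Y$, and one needs to know that after the minimal model procedure (which on the cover amounts to the $\G$-equivariant MMP) this preimage remains SNC. This reduces, by the smallness of $\G$ and the SNC hypothesis at the start of the programme, to checking that the contracted curves in $B'$ do not acquire new tangencies \'etale locally upstairs, which is the content of the classification of log-terminal surface pairs. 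With that verified, $P\cap B_0=\emptyset$ is immediate since the SNC structure at a point of $B_0$ forces the singularity type there to be quotient, whereas elliptic Gorenstein singularities are strictly log-canonical.
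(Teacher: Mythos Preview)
The paper does not give a proof of this statement: it is labelled a \emph{Fact/Definition}, and indeed functions as one --- the Vistoli covering champ is cited from \cite{angelo}, the structure of log canonical surface singularities is standard classification, and the intermediate object $(S_0,B_0)$ together with the champs $\cS', \cS'', \cS_0$ are being \emph{defined} rather than proved to exist. Your proposal is therefore not so much a competing proof as an unpacking of what the paper takes for granted, and as such it is essentially correct and in the intended spirit.

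Two small remarks. First, your dichotomy for strictly log canonical surface singularities (``Gorenstein coefficient, exceptional locus a cycle of rational curves or a smooth elliptic curve'') is the right one \emph{in this context}, but be aware that the general classification also admits finite quotients of these; the paper's assertion that only elliptic Gorenstein points arise is really a consequence of the specific MMP being run from a smooth $(S,B)$ with simple normal crossing boundary, which you correctly identify via the mechanism of contracting a numerically elliptic connected component of $B'$. Second, your final paragraph on the \'etale local simple normal crossing property is the one place where you gesture at an argument (``the content of the classification of log-terminal surface pairs'') rather than give one; this is fine, since the paper does exactly the same, but if you wanted to be more explicit the relevant input is that a klt surface pair $(S',B')$ with reduced boundary has, on the index-one cover of any point, at worst a node of $B'$ through a smooth point --- which is precisely the \'etale local SNC claim.
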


The relation of the above with the metric structure
has largely already been described since the
singularities of $S''$ remain isolated quotient
except in the elliptic Gorenstein case. Here,
the
K\"ahler-Einstein
metric is complete in the complement of $P$, 
and, indeed, modelled as in \ref{eq:complete1}
around the fibre $B'_P$.
Even locally, around $P$, however,
$B'_P$ is
an obstruction to completeness in
the sense of \ref{defn:bloch}.(c), albeit
never an occasion of bubbling in the 
boundary, cf. \ref{lem:green2}, so
that we can only expect such completeness
in $S_0$ or $S''$.
Let us
therefore make appropriate changes to
\ref{defn:bloch} to reflect this situation,

\begin{summaryd}\label{summaryd:crit} {\em (\ref{defn:bloch}.bis)  If $X$
of \ref{defn:bloch} is smooth then it matters
not a jot to any of properties (a)-(c) as to 
whether the maps $f_n$ take values in $X$ or
omit a co-dimension 2 subset, and, idem,
if $X$ were a smooth champ. As such if 
$(S',B')$ is the minimal model of a smooth quasi-projective
surface $S\bsh B$, or is the model $(S_0,B_0)$
with canonical boundary, then for $Z$ as per op. cit.: 

\noindent{\bf (b)} If a sequence of maps $f_n:\D\ra S'\bsh B'$
of which a sequence doesn't converge in 
$\ov{\mathrm{Hom}}(\D,S'\bsh B')\subseteq \ov{\mathrm{Hom}}(\D,\cS')$
is (compact open sense) contained in 
arbitrarily small neighbourhoods of $Z\cup B'$,
then we say that the minimal model of the 
smooth quasi-projective surface $S\bsh B$,
or $(S',B')$ if this is simply a pair
with klt singularities, and there's no danger of confusion,
is hyperbolic modulo $Z$.

In addition, a canonical boundary is a necessary
condition for completeness. Indeed,
if, say, $\cB_i\sbs \cB'$
were the champ over a boundary component $B_i$, the Euler characteristic
of $\cB_i\bsh\{\mathrm{sing}(\cB)\}$ is $-(K_{S'}+B')\cdot B_i$.
As such if this is zero, the universal cover of,
$\cB_i\bsh\{\mathrm{sing}(\cB)\}$ is $\bc$, so
arbitrarily small neighbourhoods of such contains
discs that are as big as one pleases. Whence,

\noindent{\bf (c)} If a sequence of maps $f_n:\D\ra S_0\bsh \{B_0\cup P\}$,
for $P$ the (possibly empty, always isolated) elliptic Gorenstein locus
of which a sequence doesn't converge in
$\ov{\mathrm{Hom}}(\D,S_0 \bsh \{B_0\cup P\})\subseteq \ov{\mathrm{Hom}}(\D,\cS_0)$
is, in the compact open sense, contained in
arbitrarily small neighbourhoods of $Z$,
then we say that the model of the smooth
quasi-projective surface $S\bsh B$ with canonical boundary,
or $(S_0,B_0)$ if this is simply a pair with
log-canonical singularities, and, there is no danger of confusion,
is complete hyperbolic modulo $Z$.

Of course, as above, strict positivity of $K_{S_0}+B_0$
on the boundary is a necessary condition for 
complete hyperbolicity, but, equally this always
holds on the canonical model in the only case where the definitions
can be non-empty, {\it viz:} $S\bsh B$ of
general type, while, finally, \ref{defn:bloch}.(d) is changed to,

\noindent{\bf (d)} If every holomorphic map,
$f:\bc\ra \cS'\bsh B'$ to the covering champ
of \ref{factd:vistoli}, respectively
$\cS_0\bsh \{B_0\cup P\}$, factors through 
$Z$ then
we say that {\bf infinito} or GG (Green-Griffiths) holds
for the minimal model, respectively
model with canonical boundary, of 
the smooth quasi-projective surface
$S\bsh B$, or even just that (GG)
holds for  $(S',B')$, respectively
$(S_0,B_0)$, if this is simply a
pair with klt, respectively canonical,
singularities, and there is no danger of
confusion. So, trivially, (GG)
always follows from (b) or (c)
as appropriate.  
}
\end{summaryd}

Modulo an issue about components of the
boundary that may have nodes, the above
considerations on the Euler characteristic
show that both $\cB'$ and $\cB_0$ have
\'etale neighbourhoods in which the
strata, in the sense of \ref{lem:green},
do not contain $\ba^1$'s. Unfortunately,
there's no reason for this \'etale covering
to extend over $\cS'$, so \ref{lem:green2}
does not apply to conclude no bubbling
in the boundary. Nevertheless,

\begin{prop}\label{prop:nobubbles}
Let $(S',B')$ be a minimal model of
a quasi-projective surface $S\bsh B$,
and $f_n:\D\ra \cS'\bsh B'$ (or,
better, $\D\ra S'\bsh \{B'\cup\mathrm{sing}(S')\}$,
since, it's easier, and, as we've observed the result
is the same) converge to a disc with
bubbles in $\cS'$ (albeit here non-schematic
points in the interior cannot be omitted),
then, \ref{defn:nobubbles}, the bubbles do not form in the
boundary.
\end{prop}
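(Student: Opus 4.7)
The plan is to rerun the tree-induction of \ref{lem:green2} on the bubble tree, and in place of the global ``no $\ba^1$ in strata'' hypothesis---which, as noted just before the statement, fails to propagate from an \'etale neighbourhood to all of $\cS'$---to derive a contradiction at the critical step directly from minimality of $(S',B')$ via adjunction. So suppose for contradiction that some bubble $R_z$ violates \ref{defn:nobubbles}. Exactly as in the proof of \ref{lem:green2}, I would form the tree $T$ of components of $R_z$ with root the disc component, colour its vertices black or white according as $f$ contracts the corresponding component or not, and induct on decreasing distance from the root.

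The induction, which is purely combinatorial and local to the tree, goes through verbatim and produces a white vertex $v$ whose image $C = f(R_v)$ is an irreducible curve lying inside a stratum $\cB'_I$ with $I \neq \emptyset$, the normalisation of $C$ meeting $\sum_{j \notin I} \cB'_j$ in at most one point. Since $\cS'$ is a surface, $|I| \geq 2$ would force $\cB'_I$ to be zero-dimensional and $v$ to be black; so $I = \{i\}$, and as the component $\cB'_i$ is irreducible we have $C = \cB'_i$, which is moreover rational since dominated by $R_v \cong \bp^1$. Now the MMP output for $K_{\cS'}+\cB'$ gives either $K_{\cS'}+\cB'$ nef or, by \cite{keelmck}, $S\bsh B$ covered by $\ba^1$'s, which is the uniruled case excluded by the minimal-model hypothesis. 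Hence $(K_{\cS'}+\cB') \cdot \cB'_i \geq 0$, whereas classical adjunction on the smooth champ $\cS'$ reads
\begin{equation*}
(K_{\cS'}+\cB') \cdot \cB'_i \;=\; 2 p_a(\cB'_i) - 2 \;+\; (\cB'-\cB'_i)\cdot \cB'_i,
\end{equation*}
and in the baseline simple normal crossing situation ($p_a = 0$ and the at-most-one-point condition giving $(\cB'-\cB'_i)\cdot \cB'_i \leq 1$) the right-hand side is at most $-1$, a contradiction.

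The main obstacle is that $(S',B')$ is only log canonical, so that $\cB'_i$ may be singular---nodes or cusps inherited from MMP contractions---which raises $p_a(\cB'_i)$ and threatens to rescue nefness. The fix is the \'etale neighbourhood observation alluded to immediately before the statement: since the adjunction computation is \'etale local around a single component, one passes to an \'etale cover of a neighbourhood of $\cB'_i$ on which the boundary is honestly SNC, and there the Euler characteristic calculation bounds $2 p_a - 2 + (\cB'-\cB'_i)\cdot \cB'_i$ strictly negatively in the presence of an $\ba^1$, stacky contributions to the different arising from $\cS' \ra S'$ only reinforcing the conclusion. Crucially this local \'etale reduction suffices because one needs it only in a neighbourhood of a single $\cB'_i$, whereas \ref{lem:green2} required such a cover over all of $\cS'$; the combinatorics of the tree argument then closes the contradiction.
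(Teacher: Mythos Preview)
Your approach is genuinely different from the paper's, and it has a real gap at exactly the point the paper's own discussion flags.

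The tree induction of \ref{lem:green2} does indeed produce a rational boundary component $\cB'_i$ whose normalisation meets the rest of the boundary in at most one point, and for \emph{smooth} $\cB'_i$ your adjunction computation gives $(K_{\cS'}+\cB')\cdot\cB'_i\leq -1$, contradicting nefness. But on a minimal model a boundary component can be a nodal rational curve---this is precisely the ``issue about components of the boundary that may have nodes'' the paper raises just before the proposition. For such a $\cB'_i$ (say with one node and meeting no other $\cB'_j$) one has $p_a(\cB'_i)=1$ and $(\cB'-\cB'_i)\cdot\cB'_i=0$, so adjunction gives $(K_{\cS'}+\cB')\cdot\cB'_i=0$, and nefness is not violated. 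Your proposed \'etale fix does not save this: the two analytic branches at the node are not separated by any \'etale cover of a neighbourhood of the node (a smooth surface germ is simply connected), so there is no \'etale cover of a neighbourhood of $\cB'_i$ on which it becomes honestly SNC with globally defined components. The paper's remark that the \'etale neighbourhood ``does not extend over $\cS'$'' is not about needing it far from the boundary---it is that the local branches at nodes are not global Cartier divisors anywhere, so the intersection-number conservation step in the tree argument has nothing to grab onto.

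The paper's proof is accordingly of an entirely different nature. Rather than analysing the limiting bubble tree, it uses \ref{lem:easycon} to write the $f_n$ in projective coordinates, interpolates them into an \emph{algebraic family} of maps from a bi-disc $\D\times\D\dashrightarrow S$, and resolves the indeterminacy $\pi:\tilde X\ra\D\times\D$, $\phi:\tilde X\ra S$. The key is then a discrepancy computation: one writes $K_{\tilde X}+\sum b+\sum(1-1/n_c)c=\phi^*(K_S+B)+F+G$ with $F$ effective and supported in the central fibre, and compares with $K_{\tilde X}+a\tilde\D=\pi^*(K_X+a\D)+D_0$. Nefness of $K_S+B$ gives $(D-F)\cdot e\geq 0$ for every $\pi$-exceptional $e$, and the negativity of the intersection form on contracted curves forces $F\geq D$. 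But a bubble $b$ mapping \emph{onto} a boundary component has log discrepancy zero (log structures absorb the ramification along $b$), so $F$ has coefficient $0$ on $b$, while $D$ has coefficient at least $1$---contradiction. This argument is insensitive to nodes on $\cB'_i$ because it never looks at the intrinsic geometry of the boundary component; the work is done by the negativity lemma on the resolution of the family.
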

\begin{proof} We use \ref{lem:easycon}, and the notation
therein, so $f_n=[f_{n0},\hdots,f_{nm}]\in\bp^m$ with
$f_{ni}$ converging uniformly on compacts to some $g_i$. As such
if a bubble forms about $\zeta\in \D$ it's uniquely 
because all the $g_i$ vanish in $\zeta$, to some
order $p_i>0$. Replacing $\D\ni \zeta$ by a small
disc centred on the origin, and subsequencing
as necessary, we can say that $f_{ni}$ vanishes
at points $t_{nij}\ra 0$, $1\leq j\leq J_i$, to
order $q_{ij}$ with $J_i$, $q_{ij}$ independent
of $n$, and, of course $p_i=\sum_j q_{ij}$. 
So, in projective coordinates, close to the bubble:
$$
f_n: z\mpo \left[(\prod_{j\in J_0} (z+c_{n0j})^{q_{0j}}) u_{n0}(z),\hdots,
(\prod_{j\in J_m} (z+c_{nmj})^{q_{mj}}) u_{nm}(z)\right]
$$
with $u_{ni}$ units converging to units $u_i$,
and we may as well suppose $u_i(0)=1$. As
such we can write,
$$
f_n(z) = [P_0(z,t_{n0})u_0(z),\hdots, P_m(z,t_{nm})u_m(z)]^{\a_n(z)}
$$
For some $\a_n(z)$ in $\mathrm{PGL}_{m+1}$ identified
with a diagonal matrix in the image of the exponential
from $\mathfrak{pgl}_{m+1}$, {\it i.e.} close to the
identity, and $P_i(z,t_{ni})$ a close to monic
polynomial of degree $p_i$. The images of the 
 $P_i(z,t_{ni})$ in the space of such polynomials,
have, for each $N\in\bn$ a Zariski closure of
the set of such with $n\geq N$, which by Noetherian
induction must stabilise, and it is in this sense
that we understand the Zariski closure $T$ in
the space of polynomials.

From which, it's almost automatic that the
$f_n$ converge in $\bp^m$ with at worst bubbles,
{\it i.e.} up to a small perturbation our sequence
belongs to one of finitely many Hilbert schemes
compactifying rational maps to $\bp^m$. Our
proposition, however, regards $S'$, which for
ease of notation we'll suppose equal to $S$,
so we need a similar sort of interpolation
but in $S$ rather than just $\bp^m$. To this
end, suppose that some generic projection
$p:S\ra \bp^2$ has been chosen a priori,
so, without loss of generality the limiting
(small) disc with bubbles only meets the ramification, $R$,
in smooth points, and, again keeping the discs around
the point of bubbling sufficiently small,
we can suppose that, 
$$(f_n)^*R=\sum_{k\in K} e_k[c_{nk}]$$ 
are supported in some compact with $e_k$,
$K$ independent of $n$. In addition there is
some $N$, independent of $n$, such that
if some map $g$ agrees with $p\circ f_n$ to 
order $N$ at some $c_{nk}$ then around
the same $g$ lifts to $S$ with the same
order around $R$. Now we do what we did
before for the initial polynomial, but
this time for the automorphisms $\a_n$,
which we identify with a vector of
functions $h_{ni}$ vanishing at the
origin, and converging to zero uniformly.
At each $c_{nk}$ these have a Taylor
expansion to order $N$, giving a 
projection $a_n$ of each $\a_n$ to (a bounded 
subset) of $\ba^{NK}$. Again, in 
the above $\liminf$ sense we take the
Zariski closure of our sequence to
get some space of automorphisms $\a(z,a)\in A\sbs \mathrm{Hom}(\D, \mathrm{PGL}_3)$,
which are just exponentials of diagonal
polynomial matrices,
and which satisfy,
$$
\a(z,a_n) = \a_{n}(z)\,\, \text{mod}\,\, \mathfrak{m}_{c_{nk}}^N,
\,\, \forall\, 1\leq k\leq K$$
Identifying $A$ with the subset of
polynomials affording it, and now
taking the Zariski closure, $V$, of our
sequence $v_n=(a_n,t_n)\in A\times T$, we 
have maps,
$$
f(z,a,t):\D\times V(\sbs A\times T)\longra\bp^2: z\mpo
[P_0(z,t)u_0(z),\hdots, P_2(z,t)u_2(z)]^{\a(a,z)}
$$
with $f_n(z)= f(z,v)^{\b_n(z)}$, and $\b_n(z)\in\mathrm{PGL}_3$,
close to $\un$ for all $z$ and identically $\un$
to high order whenever $z\in (f_n)^{-1}(R)$.
As such the $f(z,v_n)$ lift to $S$, and
lifting is a Zariski closed condition,
so we have a lifting $F(z,v):\D\times V\ra S$ of $f(z,v)$.
Of course,
no $f_n$ need equal any $F(z,v_n)$, but the
distance between their graphs goes to zero
at points of bubbling, so,
recalling $S'=S$
for ease of notation, it will suffice
to answer the question for some 
meromorphic mapping, 
$$
F:X:=\D\times\D : (z,t) \mpo f_t(z)\,\, (\in S\bsh\{B\cup\mathrm{sing}(S)\},
\,\,\, \text{if,}\,\, t\neq 0)
$$
from a bi-disc, defined everywhere except the origin,
with  bubbles  forming as $t\ra 0$. Put
$t$ to be the 2nd projection, and consider
a resolution $\phi$ of $F$,
$$
\begin{CD}
\D @<<t< X @<<\pi< \tilde{X} @ <<\tilde{\pi}< \cX\\
@. @. @VV{\phi}V @VV{\tilde{\phi}}V \\
@. @. S@<<< \cS
\end{CD}
$$
Where $\tilde{\phi}$, and $\tilde{\pi}$ are defined
by normalising the dominant component of $\tilde{X}\times_S \cS$.
As such, if we suppose, as  we may, that $\pi$ is a
minimal resolution of $F$ obtained by blowing up
in closed points, then $\cX\ra \tilde{X}$ has
pure ramification, and this only over components, $c$,
of $(t\pi)^{-1}(0)$ which are contracted to 
points in the singularities of $S$. Consequently,
at the price of replacing $\cX$ by an almost \'etale
bi-rational cover, we may, \cite[XIII,5.3]{sga1}, suppose that $\cX$
is smooth, and:
$$K_{\cX} = \tilde{\pi}^* K_{\tilde{X}} + \sum_c (1-\frac{1}{n_c}) c$$
for some $n_c\in\bn$. Similarly $(\cS,\cB)$ has simple
normal crossings so for $\{b\}$ the components of
$\phi^{-1}(B)$, and, implicitally, omitting $c$
 if it already appears among the $b$'s,
$$
K_{\tilde{X}} + \sum_b b + \sum_c (1-\frac{1}{n_c}) c =
\phi^*(K_S+B) + F + G
$$
for $F$ and $G$ (possibly nil) effective $\bq$-divisors with the
former supported in the fibre over $t\pi$, and the
latter transverse to it. Among
the $b$'s and $c$'s there may or may not be the
proper transform $\tilde{\D}$ of the central fibre, $\D$,
of $t$, but were it amongst the $b$'s we
put $a=1$, should it be among the $c$'s (so,
implicitly not a $b$) we put $a=1-1/n_c$,
for the appropriate $c$, and 0 otherwise. In all
cases,
$$K_{\tilde{X}}+ a\, \tilde\D\, =\, \pi^* (K_X+a\,\D) + D_0$$
for $D_0$ an effective divisor contracted by $\pi$,
so that for,
$$D=D_0 + \sum_{b\neq\tilde{\D}} b + \sum_{c\neq \tilde{\D}} (1-\frac{1}{n_c}) c$$
and $e$ any divisor contracted by $\pi$,
$K_S+B$ nef. yields:
$$(D-F)\cdot e\geq 0$$
so, as ever, the negativity of the intersection
form forces $F=D+E$ for $E$ effective, possibly nil, and
contracted by $\pi$. Now suppose there is 
a bubble in the boundary, then it must be
one of the above $b$'s, and $F$ can have
no support on it, contrary to the definition
of $D$.\end{proof}

As such we may refine \ref{fact:nobloch} for surfaces
by way of,

\begin{fact}\label{fact:surbloch} {\em Let $(S,B)$,
with log-canonical singularities, be
a model of a quasi-projective surface 
with $K_S+B$ nef., and $(\tilde{S},\tb)$ the modification
in the elliptic Gorenstein singularities, $P$, if any,
which is isomorphic to the minimal model around
a neighbourhood of the same, and $(S,B)$ otherwise,
then should the
Bloch principle fails, {\it i.e.} GG, \ref{summaryd:crit}.(d),
holds but \ref{summaryd:crit}.(b) or (c) fails (the 
former, rather than the latter, will be the case of
interest iff $K_S+B$ is not strictly positive
on the boundary) then there is a closed positive current 
$T$ on a smooth champ $\tilde{\cS}\ra \tilde{S}$ 
over $\tilde{S}$ such that all
of \ref{fact:nobloch}.(a)-(b) hold with $W$ of
op. cit. exactly the boundary components, if any,
on which $K_S+B$ is nil. In particular if $P\neq\emptyset$,
any components of $T$ in the fibres of $\tilde{S}\ra S$ are
a chimera, {\it i.e.}, 
by construction, they can be blown down.} 
\end{fact}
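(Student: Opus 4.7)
The plan is to deduce this essentially from \ref{fact:nobloch} together with \ref{prop:nobubbles}, plus a surface-specific Euler-characteristic calculation, and then to transport the resulting current to $\tilde{\cS}$ using the systems-of-currents formalism of \ref{sch:systems}. I first pass to the minimal model $(S',B')$ of the given quasi-projective surface and to its Vistoli covering champ $\cS'$ of \ref{factd:vistoli}, since the failure of \ref{summaryd:crit}.(b) or (c) is visible there. By \ref{prop:nobubbles}, the minimal-model hypothesis ensures that bubbles do not form in the boundary, verifying the key hypothesis of \ref{fact:nobloch}. Projectivity of the moduli allows the Nevanlinna-sense closedness result \ref{prop:closed}, and hence \ref{fact:nobloch}.(c), to go through, yielding a closed positive current $T$ on $\cS'$ which satisfies (a)--(b) there.

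Next I identify $W$ precisely. Part (b) of \ref{fact:nobloch} permits $T$ to have support on those boundary strata which can be covered by $\bc$'s. For a component $B'_i$ of the minimal-model boundary, adjunction on the champ $\cS'$ gives
\[
\chi(\cB'_i \setminus \sing(\cB')) = -(K_{S'}+B')\cdot B'_i,
\]
as already exploited in \ref{summaryd:crit}. Since $K_{S'}+B'$ is nef., this is nonpositive; strictly negative values give a hyperbolic stratum with no $\bc$'s, while vanishing identifies precisely those components whose complement (in themselves) of the remaining boundary is parabolic, and hence can be covered by $\bc$'s. Deeper strata $B'_I$ with $|I|\geq 2$ are zero-dimensional and contribute nothing. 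Thus $W$ is exactly the union of the boundary components on which $K_S+B$ is nil.

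The final step is the passage to $\tilde{\cS}$ and the chimera claim. If $P = \emptyset$ I take $\tilde{S}=S$ and $\tilde{\cS} = \cS'$. Otherwise $\tilde{S}\to S$ is a birational map isomorphic to the minimal model near $P$ and the identity elsewhere; since the $f_n$ avoid $P$ by hypothesis, the origin-distance condition in \ref{sch:systems}.bis is met, so $T$ lifts to a closed positive current $\tilde{T}$ on $\tilde{\cS}$ with (a) and (b) preserved. Any components of $\tilde{T}$ lying in fibres of $\tilde{S}\to S$ are supported on exceptional divisors over $P$, which by the very construction of $\tilde{S}$ are contractible back to the elliptic Gorenstein points; this is the content of the chimera remark. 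I expect the only genuine obstacle to lie in controlling the boundary bubbling, and this is already packaged into \ref{prop:nobubbles}; once that is in hand the remainder is bookkeeping about Euler characteristics and compatibility of the champ structures on $\cS'$ and $\tilde{\cS}$.
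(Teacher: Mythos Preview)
Your proposal is correct and matches the paper's approach: the paper presents \ref{fact:surbloch} without a separate proof, introducing it with ``As such we may refine \ref{fact:nobloch} for surfaces by way of'', i.e.\ it is meant as the immediate specialisation of \ref{fact:nobloch} once \ref{prop:nobubbles} supplies the no-bubbling-in-the-boundary hypothesis and the Euler-characteristic calculation of \ref{summaryd:crit} pins down $W$. Your write-up unpacks exactly these ingredients; the only simplification you might make is that, since the discs already avoid $P$, they lift to $\tilde{\cS}$ on the nose and you can run \ref{fact:nobloch} there directly rather than producing $T$ on $\cS'$ and then invoking \ref{sch:systems}.bis to transport it.
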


\subsection{Proof of the principle}\label{SS:bloch}

Let $X$ be an algebraic surface, and $T$ a closed
positive current in the Nevanlinna sense arising
from a sequence of discs $f_n:\D\ra X$ at some
radius $r$ as in \ref{prop:closed},
and suppose, in addition, that there is a simple normal
crossing divisor $D\sbs S$ such that $T=\un_D T$,
or, equivalently, $T=\sum_i \lb_i D_i$, for
$D_i$ the components of $D$, and $\lb_i\geq 0$.
As in \ref{fact:taut2}, we let $f'_n:\D\ra P:=\bp(\Om^1_X(\log D))$
be their logarithmic derivatives, with $L$ the tautological
bundle, and since our goal is a lower bound for
the (Nevanlinna) degree of $(f'_n)^*\mathrm{c}_1(L)$
in terms of the (Nevanlinna) area on $X$, we may,
without loss of generality, suppose:
\begin{equation}\label{eq:limsup}
\limsup_n \left(\nint_{\D (r)} f_n^* \om_X\right)^{-1} \nint_{\D (r)} (f'_n)^*
\mathrm{c}_1(\ov{L}) < \infty
\end{equation}
Consequently as per \ref{sch:systems} we get a
well defined derivative $T'$ on $P$ lifting
$T$, and we assert,
\begin{prop}\label{prop:main}
If 
$(X,D)$ is obtained from a pair $(Y,B)$ of
a simple normal crossing divisor $B$ on a smooth
surface $Y$ by blowing up in the crossings
of $B$, and 
the origins $f_n(0)$ are bounded away from $D$, then:
$$
L\cdot T'\geq (K_X + D)\cdot T
$$
\end{prop}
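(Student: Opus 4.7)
The plan is to combine the tautological identity of \ref{fact:taut2} applied to each $f_n$ with adjunction for the components of $D$ on the surface $X$, exploiting hypothesis (b) on origins to control the boundary and pointwise corrections. First, I would apply \ref{fact:taut2} at the marked point $0\in\D$, and pass from the complete-metric class $\mathrm{c}_1(L^c)$ to its smooth counterpart $\mathrm{c}_1(\overline L)$ via \ref{fact:taut3}. The Euler term $E_{\D}(r)=r/2$ is constant (since $\chi(\D_t\setminus\{0\})=0$), the origin-evaluation $\log\Vert(f_n)_*\dz\Vert_{\omega_{X\setminus D}}(0)$ is bounded by hypothesis (b) combined with \ref{eq:complete2}, the error produced by \ref{fact:taut3} is similarly absorbed, and the boundary integral $\int_{\pa\D_r}\log\Vert\pa\Vert_{\omega_X}\dc g$ is $O(\log\nint_{\D_r} f_n^*\omega_X)$ for $r$ outside a hyperbolic-finite set by \ref{claim:lang1}. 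Dividing by the Nevanlinna area (which tends to infinity by the negation of \ref{eq:limsup}) and passing to a sub-sequence, one obtains
$$
L\cdot T'\ \geq\ \liminf_n\frac{\mathrm{Rad}^B_{f_n}(r)-\mathrm{Ram}^B_{f_n}(r)}{\nint_{\D_r}f_n^*\omega_X}.
$$

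Second, write $T=\sum_i\lambda_iD_i$; surface adjunction gives $(K_X+D)\cdot D_i=-\chi(D_i^\circ)$ with $D_i^\circ:=D_i\setminus\bigcup_{j\neq i}D_j$. The hypothesis that $(X,D)$ arises from $(Y,B)$ by blowing up crossings of $B$ dichotomises the components of $D$: every exceptional curve $D_i\cong\bp^1$ meets exactly two other components transversally at two distinct points, so $\chi(D_i^\circ)=0$; every proper-transform $D_i=\wt{B}_j$ has $\chi(D_i^\circ)=2-2g(B_j)-m_j$, with $m_j=\sum_{k\neq j}B_j\cdot B_k$ the number of original crossings of $B_j$ in $B$. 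The target inequality thus reduces to
$$
\liminf_n\frac{\mathrm{Rad}^B_{f_n}-\mathrm{Ram}^B_{f_n}}{\mathrm{area}}\ \geq\ -\sum_i\lambda_i\chi(D_i^\circ)\;=\;(K_X+D)\cdot T,
$$
and Summary \ref{sch:systems}.bis together with hypothesis (b) ensures all the ambient intersection numbers $D_i\cdot T=\sum_k\lambda_k D_i\cdot D_k$ are well-defined and non-negative.

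The third step, which is the main obstacle, is to prove this component-by-component lower bound. The full count $\nint f_n^*\mathrm{c}_1(\overline D_i)/\mathrm{area}$ limits to $D_i\cdot T$ by \ref{fact:cart}, and the reduced count $\mathrm{Rad}_{D_i}/\mathrm{area}$ is at most this, the deficit recording tangencies of $f_n$ to $D_i$. The strategy I would pursue is to rescale the arcs of $f_n$ close to each proper-transform stratum $D_i$ into limit discs in $D_i^\circ$, applying a Riemann--Hurwitz bookkeeping to the resulting partial covers of $D_i$ with marked points at the $|D_i\cap(D-D_i)|$ crossings of $D_i$ with the rest of $D$. The essential role of the blow-up hypothesis is that every crossing of $D$ in $X$ is a two-component node (no triple points), so inclusion--exclusion over the crossing set is consistent: the exceptional $\bp^1$'s contribute $0=-\chi(D_i^\circ)$ to both sides, and the proper-transforms' Riemann--Hurwitz contribution reproduces $-\lambda_i\chi(\wt{B}_j^\circ)$. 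The tangency defects inflating $\nint f_n^*\mathrm{c}_1(\overline D_i)$ beyond $\mathrm{Rad}_{D_i}$ are precisely those forcing $\mathrm{Ram}^B_{f_n}$ to grow, and making rigorous the mutual absorption of these terms—the ``interplay between hypothesis (b) and adjunction'' of the introduction—is the crux of the proof.
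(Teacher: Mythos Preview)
Your approach diverges substantially from the paper's and has a genuine gap at the decisive step. Step~3 is the whole content of the proposition, and ``rescaling arcs near $D_i$ into limit discs in $D_i^\circ$ and applying Riemann--Hurwitz bookkeeping'' is not a proof; it is a hope. There is no mechanism offered to relate $\mathrm{Rad}^D_{f_n}-\mathrm{Ram}^D_{f_n}$ to $-\sum_i\lambda_i\chi(D_i^\circ)$, and the discs $f_n$ need not be anywhere close to covering maps of the $D_i$, so Riemann--Hurwitz for the limit curves gives no direct control on the Nevanlinna counting functions of the approximating discs. You have also confused the logic around \ref{eq:limsup}: it is \emph{assumed} (to guarantee $T'$ exists and is bounded), not negated; the unboundedness of the Nevanlinna area comes from \ref{eq:unbounded}. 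More fundamentally, re-invoking \ref{fact:taut2} here is circular: the derived current $T'$ was already \emph{constructed} by that route, and the proposition is asking for an intersection-theoretic lower bound on $L\cdot T'$ once $T'$ exists.

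The paper's proof stays entirely on $P=\bp(\Om^1_X(\log D))$. For each component $C\subset D$ the residue map gives a short exact sequence $0\to\cO_C(K_X+D)\to\Om^1_X(\log D)\otimes\cO_C\to\cO_C\to 0$, whence a residual section $S_C\subset P_C$ with $\cO_{P_C}(S_C)=\cO_{P_C}(L-\pi^*(K_X+D))$. One decomposes $T'=\sum_C T_C$ along the $P_C$; for exceptional $C$ (contracted by $\rho:X\to Y$) the section $S_C$, hence $L$, is nef on $P_C$, so those terms are automatically $\geq 0$ and can be dropped. For $C=B_i$ a proper transform one is left with proving $S_{B_i}\cdot T_{B_i}\geq 0$. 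This is done by an auxiliary blow-up argument (Claim~\ref{claim:nonneg}): blow up $P$ in $\cI_{S_C}^n$ viewed as ideals on $P$, obtaining exceptional divisors $E_n$, and show that if $(\pi_n)_*(\un_{E_n\setminus\wt{P_C}}T_n)=0$ for all $n$, then $S_C\cdot\un_{P_C}T'\geq 0$; the hypothesis on origins is what makes $E_n\cdot T_n\geq 0$ via \ref{fact:cart}. The vanishing of mass on $E_n\setminus\wt{P_C}$ is then checked by a direct estimate in local coordinates $(x,t)$ with $t=\frac{dy}{dx/x}$: being $\e$ away from $\wt{P_C}$ on the blow-up amounts to $|dy|\ll|dx|/|x|^{1-1/n}$, and the complete metric \ref{eq:complete2} makes $T(\un_{\delta,\e}\,\frac{dd^c|x|^2}{|x|^{2-2/n}})\ll\delta^{2/n}\log^2(1/\delta)\to 0$. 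This is the ``interplay between (b) and adjunction'' the introduction refers to---adjunction is the residual-section identity, not Riemann--Hurwitz for limit curves.
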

\begin{proof}
For every curve $C$ in the support of $D$ there is
a short exact sequence,
$$
0 \ra \cO_C (K_X + D) \ra \Om_X (\log D) \otimes \cO_C \ra \cO_C \ra 0
$$
where the last map is the residue map, which in
turn defines a section $S_C$ in the fibre $P_C$
of $P$ over $C$. The $S_C$ are disjoint among
themselves, and:
$$
\cO_{P_C} (S_C) = \cO_{P_C} (L - \pi^* (K_X + D))
$$
The map $\rho:X\ra Y$ distinguishes curves in $D$
into those arising as proper transforms of curves
in $B$, and those blown down by $\rho$. For $C$
of the latter type, $S_C$, and whence $L$, are
nef. on $P_C$. Now define,
$$
T_C:= \begin{cases}
\un_{P_C} T & \text{$C$ not contracted by $\rho$,}\\
\un_{P_C} T - \sum_{C'\in \vert D\vert\bsh C} \un_{\pi^{-1}(C\cap C')} T
& \text{otherwise}
\end{cases}
$$
then $T=\sum_C T_C$, and identifying
a curve $B_i$ in $B$ with its proper transform,
$$
L\cdot T\geq \sum_i L\cdot T_{B_i} \geq (K_X+D)\cdot T + \sum_i S_{B_i}\cdot T_{B_i}
$$
where, at a risk of notational
confusion with \ref{fact:cart}, 
 here and elsewhere $\un_* T$ is the part
of $T$ supported on the set $*$, which itself
is the push-forward of a closed positive 
current on $*$ if $*$ is, inter alia, Zariski
closed.

We thus require to prove that the $S_{B_i}\cdot T_{B_i}\geq 0$,
to which end let's momentarily forget all of
the above and make,
\begin{inter}\label{inter:1}
{\em
Let $V\sbs W$ be a proper subvariety, 
and $F\sbs V$ a Cartier divisor on $V$.
For each $n\in\bn$ let $\pi_n: W_n\ra W$
be the blow up in $\cO_V(-nD)$ viewed
as a sheaf of ideals on $W$, with $E_n$
the exceptional divisor. If, for simplicity
of exposition, we suppose all of
$F,V,W$ 
smooth with $V$ a divisor,
then $W_n$ has only a $\bz/n$ quotient
singularity on $E_n$, and this is disjoint
from the proper transform $\tilde{V}$ of
$V$. As such there is a smallest smooth
champ $\cW_n\ra W_n$ on which currents
{\it etc.} may be understood without
repeated use of the preparation theorem.

Given where we started, we obviously 
suppose there is a current $T$ arising
from discs on $W$ by way of \ref{prop:closed},
with $T_n$ liftings to $\cW_n$ as in
\ref{sch:systems}, with origins bounded
away from $F$, and we assert:
\begin{claim}\label{claim:nonneg}
If for every $n\in\bn$, $(\pi_n)_* (\un_{E_n\bsh\tilde{V}}T_n)=0$
then, $F\cdot\un_V T\geq 0$.
\end{claim}
\begin{proof}
For each $n$ write:
$$
T_n= \un_{\tilde{V}} T_n + \un_{E_n\bsh\tilde{V}}T_n + \un_{\cW_n\bsh\pi^{-1}(V)} T
$$ 
then by \ref{fact:cart}, $E_n\cdot T_n\geq 0$,
while $E_n$ is negative on the fibres of $\pi_n$
so,
$$
E_n\cdot \un_{\tilde{V}} T_n + E_n \un_{\cW_n\bsh\pi^{-1}(V)} T\geq 0
$$
and the first term is $n F\cdot \un_V T$, while,
$
\pi_n^* V = \tilde{V} + n E_n,
$
so the second is bounded above by, $\frac{1}{n} V\cdot \un_{W\bsh V} T$.
\end{proof}
}
\end{inter}
The intermission over, we see that we'll be
done if we can establish the conditions 
of \ref{claim:nonneg} in the particular
case of $W=P$, $V=P_C$, and $F=S$ for
$C$ in the support of our boundary. To this end suppose locally 
$D$ is given by an equation $x=0$, and $y$ is the other coordinate. In 
a neighbourhood of $S$ we have a coordinate $t = \frac{dy}{dx/x}$ (or 
$\frac{dy/y}{dx/x}$ if we're close to a crossing of $D$) with respect 
to which $\pi_{n} : P_n \ra P$ is a resolution of 
the ideal $I_n = (t^n , x)$, i.e. $\cO_{P_n}(-E_n)=\pi^{-1}(I_n)$, so to be an open 
neighbourhood a distance $\e$ off the proper
transform of $P_C$ amounts to the condition,
$$
\frac{\vert x \vert}{\vert x \vert + \vert t \vert^n} \geq \e 
\Leftrightarrow \vert dy \vert \ (\hbox{resp.} \ \left\vert 
\frac{dy}{y} \right\vert \ \hbox{near a crossing}) \leq \left( 
\frac{1-\e}{\e} \right)^{\frac{1}{n}} \ \frac{\vert dx \vert}{\,\,\,\vert 
x \vert^{1-1/n}} \, .
$$
Consequently we need an estimate for 
the measure, $T \left( \un_{\d , \e} \, \frac{dd^c\vert x\vert ^2}{\vert x 
\vert^{2-2/n}} \right)$ for $\un_{\d , \e}$ a small neighbourhood a 
distance $\d$ around $C$, but $\e$ off 
the proper transform of $P_C$. 
This is however rather easy, {\it e.g.}
by \ref{eq:complete2}, the complete metric is
absolutely integrable so the above satisfies:
$
\ll \d^{2/n}\log^2\frac{1}{\d}
$.
\end{proof}
From which we can proceed to our main application:
\begin{cor}\label{cor:bloch}
Let $(S,B)$,
with log-canonical singularities, be
a model of a quasi-projective surface 
with $K_S+B$ nef.,
and suppose 
there is a proper sub-variety $Z\sbs S$ 
without generic points in the boundary such
that the Green-Griffiths property, \ref{summaryd:crit}.(d),
holds for $(S,B)$ then $S\bsh B$ is hyperbolic
modulo Z, \ref{summaryd:crit}.(b), and it is
complete hyperbolic modulo $Z$, \ref{summaryd:crit}.(c),
iff $(K_S+B)\cdot C >0$ for every curve $C\sbs B$.
\end{cor}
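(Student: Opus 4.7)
The necessity of $(K_S+B)\cdot C>0$ in (c) was noted after \ref{summaryd:crit}: a boundary component $C$ of zero intersection has Euler characteristic zero on its smooth locus, hence $\bc$ as universal cover, producing arbitrarily large discs in an arbitrarily small neighbourhood of $C$ disjoint from $Z$, violating completeness. For the substantive direction, I argue by contradiction, treating (b) and the ``if'' of (c) simultaneously. Assuming GG, suppose the relevant hyperbolicity fails. Apply \ref{fact:surbloch}: on a smooth champ $\tcs\ra\tilde S$ there are discs $f_n:\D\ra\tcs\bsh B'$ whose origins avoid $Z\cup B$ (respectively $Z$), and whose normalised Nevanlinna currents $T_n(r)$ accumulate, outside a set of finite hyperbolic $r$-measure, to a non-zero closed positive current $T$ supported in $Z\cup W$, where $W\sbs B$ is the set of boundary components of $(K_S+B)$-degree zero; the strict positivity hypothesis in (c) forces $W=\emptyset$.

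Next, I choose a modification $\pi:X\ra\tcs$ so that the proper transform of $B$ together with those components of $Z$ carrying mass of $T$ form an SNC divisor $D$, arranged in the ``blow-up of crossings of an SNC pair'' shape required by \ref{prop:main}. By \ref{sch:systems}.bis (the origins being bounded away from the ramification of $\pi$) I lift $T$ to a closed positive current on $X$, and pass to the logarithmic derivative current $T'$ on $\bp(\Om^1_X(\log D))$. The main lemma then yields
$$L\cdot T'\,\geq\,(K_X+D)\cdot T.$$
The upper bound $L\cdot T'\leq 0$, interpreted in the normalised weak limit, follows from the tautological identity \ref{fact:taut2} combined with the comparison \ref{fact:taut3} --- whose boundary penalty $\log|\log\Vert\un_B\Vert|$ evaluated at the origin is uniformly bounded since the origins avoid $B$ --- and the Nevanlinna length-area dichotomy \ref{claim:lang1} applied to the area functional. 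The lower bound $(K_X+D)\cdot T\geq 0$ arises by writing $K_X+D=\pi^{\ast}(K_S+B)+E$ with $E$ effective on $\mathrm{supp}(T)$, which exploits log-canonicity of $(S,B)$ plus the non-negative contribution of the added $Z$-components of $D$ through adjunction, together with nefness of $K_S+B$.

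Equality $(K_X+D)\cdot T=0$ forces $\mathrm{supp}(T)$ to lie on curves of $(K_S+B)$-degree zero. Mass of $T$ on $W\sbs B$ would, via the Ahlfors-to-Nevanlinna transit \ref{fact:almost} and Duval's theorem \cite{duval}, manufacture a limiting entire curve with bubbles cutting $W$; but \ref{prop:nobubbles} forbids bubbles in $B$ unless the entire limit lies in $B$, while that limit's origin is the limit of the $f_n(0)$ --- off $B$ --- a contradiction (so this possibility is vacuous in both (b) and (c), and in (c) is already excluded by $W=\emptyset$). Mass of $T$ on the $Z$ part yields, by the same Duval extraction, an entire curve in the interior, whose image is forced by GG to factor through $Z$ yet whose origin is again the limit of the $f_n(0)$, which avoid $Z$ --- a contradiction. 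The main obstacle is the bookkeeping behind the lower bound: organising the modification $X\ra\tcs$ so that \ref{prop:main} applies literally, while simultaneously absorbing the log-canonical discrepancies of $(S,B)$ and the adjunction contribution of the $Z$-components of $D$ into an effective correction along $\mathrm{supp}(T)$, so that nefness of $K_S+B$ genuinely propagates to $(K_X+D)\cdot T\geq 0$.
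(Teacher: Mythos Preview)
Two genuine gaps.

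The final contradiction is wrong. Duval's extraction in \cite{duval} does not produce an entire curve whose origin is $\lim f_n(0)$: the Brody-type rescaling zooms in at an arbitrary point of mass accumulation, and the resulting $\bc$ retains no memory of the basepoints, so your step ``the limit's origin is off $Z$ (resp.\ $B$)'' is false. Re-invoking Duval here is in any case circular --- it is precisely the mechanism behind \ref{fact:nobloch}, which already placed $T$ in $Z\cup W$; any entire curve you now extract will simply lie in $Z\cup W$, contradicting nothing. The paper closes instead via self-intersection: GG forces log-general type, hence a canonical model $\nu:(S,B)\ra(S'',B'')$; the curves with $(K_S+B)\cdot C=0$ supporting $T$ are $\nu$-contracted, so $T^2<0$ by negativity of the intersection form; but the origins being bounded away from $\mathrm{supp}(T)\sbs Z\cup B$ give $C\cdot T\geq 0$ for every such component via \ref{fact:cart}, whence $T^2\geq 0$. (Incidentally your upper bound is $L\cdot T'\leq F\cdot T$, not $\leq 0$: the $\mathrm{Rad}^D$ term in \ref{fact:taut2} counts crossings of $D$, and the discs do cross the $Z$-part $F$. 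This is cosmetic, since $(K_X+D)\cdot T\leq F\cdot T$ still gives $(K_S+B)\cdot\rho_*T\leq 0$.)

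Case (c) cannot be run ``simultaneously'' with (b). There the origins are bounded only from $Z$, not from $B$, so the origin penalties in \ref{fact:taut3} and \ref{eq:complete3} are uncontrolled; your assertion that $\log\vert\log\Vert\un_B\Vert\vert$ at the origin is uniformly bounded ``since the origins avoid $B$'' is exactly what fails in (c), and the paper flags this as the serious difficulty. It is handled by a genuine induction removing boundary components one at a time: for $C\sbs B$ pass to the $n$th-root champ $\cS_n\ra\cS$ along $C$, where near $C$ the crude length bound \ref{eq:length} replaces \ref{eq:complete3}, run the argument with $C'$ in place of $B$ to obtain $(K_S+(1-\tfrac{1}{n})C+C')\cdot T\leq 0$, and let $n\ra\infty$.
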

\begin{proof} We aim for  \ref{summaryd:crit}.(b) first,
and suppose it fails.
Let, $\rho:X\ra S$ be a smooth model such
that the total transforms of $B$, any
elliptic Gorenstain singularities, $P$, and $Z$, form a
smooth simple normal crossing divisor $D$, 
and $\rho$ factors through some $X\ra Y$ as
hypothesised in \ref{prop:main}. 
We divide $D$ into the part $\pa$ supported
on $\pi^{-1}(B\cup P)$, and the rest $F$.
By the definition of log-canonical singularities,
there is a divisor $E\geq 0$ contracted by $\rho$
such that,
\begin{equation}\label{eq:discrep}
(K_X+\pa)= \rho^*(K_S+B) + E
\end{equation}
By \ref{fact:nobloch} we can find a closed positive current $T$
on $X$ supported in $D$
afforded by discs $f_n$ at a radius $r$ in the Nevanlinna sense, \ref{eq:cnev},
such that the origins $f_n(0)$ are bounded away from $D$,
and by \ref{lem:Dorigin} we may even suppose that the
$f'_n(0)$ are uniformly bounded below. Consequently,
\ref{fact:taut2} and  \ref{fact:taut3} together with
\ref{claim:lang1} apply to find a derived current $T'$ on
$\bp(\Om^1_X(\log D))$  lifting $T$ such that,
$$
L\cdot T'\leq \limsup_n \left( \nint_{\D (r)} f_n^* \om_X\right)^{-1} \mathrm{Rad}_{f_n}^F(r)
$$
Since the origins are bounded away from $D$, so, a fortiori
$F$, by \ref{fact:cart}, the right hand side is at most
$F\cdot T$, so that by \ref{prop:main}, and \ref{eq:discrep},
$$
(K_S+B)\cdot \rho_* T\leq (K_X+D)\cdot T - F\cdot T\leq L\cdot T' - F\cdot T\leq 0
$$
Since GG holds for $(S,B)$ it must have log-general type
and admits a canonical model $\nu:(S,B)\ra (S'',B'')$,
so that the support of $T$ is contracted by $\nu$,
and $T^2<0$. However, all curves contracted by $\nu$ necessarily
belong to $Z\cup B$, so by virtue of the positions
of the $f_n(0)$, $T^2\geq 0$ by \ref{fact:cart}, which
is nonsense. 

As such, let us turn to the discussion of complete
hyperbolicity. The only if has been covered in \ref{summaryd:crit},
and by \ref{prop:nobubbles} together with an appeal to
\cite{duval} as in
\ref{fact:nobloch} the current $T$ of \ref{fact:surbloch}
cannot be supported in the boundary $\tb$ 
of the modification, $\tilde{S}\ra S$, in \ref{fact:surbloch}
except when this differs from $S$, {\it i.e.}
in the presence of elliptic Gorenstein
singularities. Let us clarify this latter, and
very minor, difficulty: supposing these exist
in $S$, then $(\tilde{S},\tilde{B})\ra (S,B)$
is a modification
in the elliptic Gorenstein singularities, $P$, alone,
$K_{\tilde{S}}+\tb$ is the pull-back of $K_S+B$., the pre-image
of every point $p\in P$ is a connected component,
$\tb_p$ of $\tb$, and $\tilde{S}$ has at worst quotient
singularities. In particular, we have a Vistoli
covering champ $\tilde{\cS}\ra \tilde{S}$, and if $\tz$ is the
proper transform of the locus $Z\sbs S$ and we
arrive to prove that discs are arbitrarily close
to $\tz+\tb_P$ in $\tilde{S}$, then, plainly they're arbitrarily
close to $Z$. As such, the problem of elliptic
Gorenstein singularities may be ignored by
replacing $(S,B)$ by $(\tilde{S},\tilde{B})$, and every occurrence
of close to $Z$ by close to $Z$ and elliptic
Gorenstein components of the boundary. 
A more serious difficulty is that the
above upper bound for the intersection with $L$
may fail because of \ref{eq:complete3}.
To minimise the
changes, let's proceed by induction, {\it viz:},
bearing in mind the above remarks about the elliptic
Gorenstein case,  
prove arbitrary proximity to $Z\cup B'$ for 
$B'\sbs B$ a (eventually empty) divisor on $S$.
We can confine our attention to discs close
to a connected component of $Z\cup B'$, and,
as in \ref{factd:vistoli} we take an almost \'etale map
$\cS\ra S$ from a champ de Deligne-Mumford
in which the boundary $\cB$ becomes simple
normal crossing. Now fix a component $C\sbs B'$,
with $\cC\ra C$ the champ over it, and put $B=C+C'$, $B'=C+B''$. Away from
$C$ we blow up as before with a view to arguing
as in \ref{prop:main}. Around $C$ we proceed
quite differently. Firstly $\cC$ may have
nodes. These are disjoint from the rest
of $B$ but may intersect $Z$, if we modify
in them (blow up at the $\cS$ level and
take moduli) we get some exceptional divisor
$N$, together with proper transforms $\tilde{C}$, $\tilde{Z}$,
of $C$, respectively $Z$,
and if we can prove that large discs must
be arbitrarily close to
$\tilde{Z} + N + B''$, then on blowing down
the discs are either small because they're
close to a node, or they're arbitrarily
close to $Z+B''$. As such we may, without
loss of generality, suppose
that $\cC$ is smooth, and for $n\in\bn$,
we let $p_n: \cS_n\ra \cS$ be the champ
obtained by taking a $n$th root along $\cC$.
Of course $Z$ may be rather far from simple
normal crossing in $\cS_n$, so we blow
up the champ $\cS_n$ to some $\rho:\cX\ra\cS$ in which
the total transform $\cD$ of $Z+C'$ is
simple normal crossing, {\it i.e.} we're
planning on running the same argument as
before but with $\cS_n$ instead of $S$,
and $C'$ instead of $B$. The singularities
remain log-canonical so this time for
$\pa$ the part of $\cD$ supported in
$\rho^{-1}(C')$, with $F$ again denoting
the rest, there is a divisor $E$ contracted
by $\rho$ such that,
$$
(K_\cX + \pa) =\rho^* (K_{\cS_n} + C') +E$$
By \ref{sch:systems}.(c), we again have
a closed current $T_n$ on $\cS_n$ arising
from big discs, which we may suppose to
have origins bounded away from $Z+C'$, and
whence, $V\cdot T_n\geq 0$ for every divisor
$V$ supported in $Z+C'$.
Next we take a logarithmic derivative,
$T_n'$ on $\pi:\bp(\Om^1_{\cX}(\log \cD))\ra \cX$ with
$L$ the tautological bundle, and, although
the origins of the discs can be close to $C$, we
still have the bound
$L\cdot T_n'\leq F\cdot T_n$ 
because close to $C$ \ref{eq:length} applies
rather than \ref{eq:complete3}, so,
if we write,
$$
T_n'=\un_{\pi^{-1}(\cD)} T_n' + R_n
$$
then $L\cdot \un_{\pi^{-1}(\cD)} T_n'\geq (K_\cX +\cD)\cdot \un_\cD T_n$
for the same reason as \ref{prop:main}, while
by \ref{fact:surbloch}, 
(the possibly nil) current $R_n$ is supported
in at most fibres of $\pi$. As such,
$$
0\geq \left(K_{S} + (1-\frac{1}{n})C + C'\right)\cdot T
$$
and the images of the $T_n$ in the homology
of $S$ are constant, so
taking $n\ra\infty$ we get exactly the
same contradiction as before.
\end{proof}

\subsection{Kobayashi metric}\label{SS:last}

Since we'll have a certain need for a good
definition of tangent bundle, let's start
with $(\cX,\cD)$ a smooth champ de Deligne-Mumford
with simple normal crossing boundary, and
recall,

\begin{defn}\label{defn:kob}
Let $(\cX,\cD)$ be as above, then for $x \in 
\cX \bsh \cD $, $t \in T_
{\cX} (-\log \cD)\otimes \bc (x)$,
$$
{\mathrm{Kob}}(t)_{\cX\bsh\cD,x} = \inf \left\{ 
\frac{1}{\vert R \vert}\, \vert\, f : 
(\D , 0) \ra (\cX \bsh \cD,x) : f_* \left( 
\frac{\partial}{ \partial z} 
\right) = R t \right\} \, .
$$
\end{defn}

A useful tool for the calculation of which is,

\begin{fact}\label{fact:kobcalc1}{\em
Let $x\in\cX\bsh\cD$ be as above, and suppose
that every sequence of pointed discs in $\cX\bsh\cD$
with origin $x$ admits a subsequence converging in $\cX$
uniformly on compact sets of $\D$, then for  any (\'etale)
neighbourhood $V\ra \cX\bsh\cD$ of $x$, there is a constant
$c(x,V)>0$ such that,
$$
c(x,V)\, \mathrm{Kob}_{V,x}\leq \mathrm{Kob}_{\cX\bsh\cD} \leq \mathrm{Kob}_{V,x}$$ 
}
\end{fact}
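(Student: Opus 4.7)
The plan is to verify the two inequalities separately, with the upper bound being essentially tautological and the lower bound requiring a compactness/lifting argument fed by the normal family hypothesis.

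For the upper bound $\mathrm{Kob}_{\cX\bsh\cD,x}\leq \mathrm{Kob}_{V,x}$, I would just observe that an étale map $V\to\cX\bsh\cD$ is a local isomorphism near the chosen lift of $x$. So any pointed disc $g:(\D,0)\to (V,x)$ with $g_*(\partial/\partial z)=Rt$ composes with $V\to\cX\bsh\cD$ to a pointed disc in $\cX\bsh\cD$ with the same derivative $Rt$. The infimum defining $\mathrm{Kob}_{\cX\bsh\cD,x}(t)$ is thus taken over a family containing the image of the family for $V$, whence the stated inequality.

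For the lower bound, I would argue that it suffices to produce a radius $\rho=\rho(x,V)>0$ such that every $f:(\D,0)\to (\cX\bsh\cD,x)$ lifts, after restriction to $\D(\rho)$, to a pointed disc $\tf:(\D(\rho),0)\to (V,x)$. Indeed, reparametrising by $z\mpo \rho z$ one then obtains a pointed disc in $V$ whose derivative at the origin is $\rho$ times that of $f$, giving $\mathrm{Kob}_{V,x}(t)\leq \rho^{-1}\mathrm{Kob}_{\cX\bsh\cD,x}(t)$, so one takes $c(x,V):=\rho$.

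The existence of such a universal $\rho$ is where the hypothesis is used, and I would argue by contradiction. Since $V\ra \cX\bsh\cD$ is étale and the disc is simply connected, a map $f:(\D(\rho),0)\to (\cX\bsh\cD,x)$ whose image lies in a sufficiently small (fixed, depending only on $x$ and $V$) distinguished neighbourhood $U\ni x$ over which $V\ra\cX\bsh\cD$ is trivialised admits a unique pointed lift to $V$. If no uniform $\rho$ worked, there would be a sequence $f_n:(\D,0)\to(\cX\bsh\cD,x)$ and radii $\rho_n\downarrow 0$ such that $f_n|_{\D(\rho_n)}$ fails to lift; equivalently, $f_n(\D(\rho_n))\not\sbs U$ for any fixed trivialising neighbourhood $U$. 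By the standing hypothesis, after subsequencing $f_n$ converges uniformly on compacts of $\D$ to some $f:\D\ra\cX$ with $f(0)=x\in\cX\bsh\cD$. Since $\cX\bsh\cD$ is open and $f$ is continuous at $0$, for some small $\rho_0>0$ we have $f(\ov{\D}(\rho_0))\sbs U$, and by uniform convergence $f_n(\ov{\D}(\rho_0))\sbs U$ for all large $n$. This contradicts the failure to lift on $\D(\rho_n)\sbs\D(\rho_0)$ for $n$ large enough that $\rho_n<\rho_0$.

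The only subtle point to be careful about is keeping the lift pointed at the chosen basepoint of $V$ over $x$: this is automatic because the trivialisation $V|_U\simeq \coprod U$ picks out exactly one sheet containing the fixed lift of $x$, and $\D(\rho_0)$ being connected together with $f_n(0)=x$ forces $\tf_n$ into that sheet. No further estimates are required; the whole content is the passage from pointwise (at each disc) lifting to uniform-in-the-disc lifting via the normal family assumption.
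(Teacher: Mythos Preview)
Your proof is correct and follows essentially the same line as the paper's. Both arguments dispose of the upper bound by functoriality, and for the lower bound both use the normal-family hypothesis to produce, after subsequencing, a uniform radius on which the discs $f_n$ land in a small neighbourhood of $x$ over which $V$ trivialises, whence lift to $V$; the paper phrases the contradiction via a ratio $R_n\to\infty$ against near-extremal discs $g_n$ in $V$, while you package the same idea as the existence of a universal lifting radius $\rho$, but the content is identical.
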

\begin{proof} The inequality on the right is automatic,
{\it i.e.} by definition the Kobayashi metric increases
under any mapping to $\cX\bsh\cD$. As to the left, 
if it fails, we may suppose
that there are a sequences of discs, $f_n:(\D,0)\ra (\cX\bsh\cD,x)$,
$g_n: (\D,0) \ra V$, with:
$$
(f_n)_* (\dz)= R_n (g_n)_* (\dz) ,\,\, R_n\ra \infty$$
with the $g_n$'s as close to an extremal disc in
the given direction as we please. By hypothesis,
however, after subsequencing, $f_n$ converge
to a disc. Whence there is 
some $0<r<1$, such that for all sufficiently large
$n$, $f_n(\D (r))\sbs V$,
so $R_n$ is at most $(1+\e)/r$, for $\e>0$ only
depending on how far the $g_n$ are from being extremal.
\end{proof}

Let us also observe that this continues to
work for $x\in \cD$, to wit:
\begin{fact}\label{fact:kobcalc2} {\em Everything as
in \ref{fact:kobcalc1}, except 
we suppose the aforesaid convergence for pointed
discs in $\cX\bsh\cD$ with origin in some relative compact
$U\sbs \cX\bsh\cD$ to a disc in $\cX$, then for any (\'etale)
neighbourhood $V\ra \cX$ of the closure of $U$, there is a constant
$c(U,V)>0$ such that for all $u\in U$,
$$
c(U,V)\, \mathrm{Kob}_{V\bsh\cD,u}\leq \mathrm{Kob}_{\cX\bsh\cD,u} 
\leq \mathrm{Kob}_{V\bsh\cD,u}$$
}
\end{fact}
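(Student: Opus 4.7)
The plan is to mimic \ref{fact:kobcalc1} essentially verbatim, with the one addition of controlling the étale lifting uniformly as the base point varies in $U$. First I would dispose of the right-hand inequality, which is automatic: the étale map $V \to \cX$ restricts to $V\bsh\cD \to \cX\bsh\cD$, and the Kobayashi pseudo-metric is distance-decreasing under holomorphic maps, so $\mathrm{Kob}_{\cX\bsh\cD,u} \leq \mathrm{Kob}_{V\bsh\cD,u}$ for every $u \in U$ identified with its chosen lift in $V$.

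For the left inequality I would argue by contradiction as in \ref{fact:kobcalc1}: failure produces a sequence $u_n \in U$, tangent directions $t_n$ at $u_n$, discs $f_n : (\D,0) \to (\cX\bsh\cD, u_n)$ with $(f_n)_*(\dz) = R_n t_n$ and $R_n \to \infty$, together with $(1+\epsilon)$-near extremal discs $g_n : (\D,0) \to (V\bsh\cD, u_n)$ satisfying $(g_n)_*(\dz) = t_n$. The hypothesis, applied to the sequence $\{f_n\}$ whose origins $u_n$ lie in the relatively compact $U$, gives after subsequencing uniform convergence on compacts to some $f : \D \to \cX$ with $f(0) = \lim u_n \in \bar{U}$.

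The step doing the work is the étale lift. Since $V \to \cX$ is an étale neighbourhood of the compact set $\bar{U}$, each point of $\bar{U}$ has an open in $\cX$ over which the map $V \to \cX$ admits a local inverse; by compactness of $\bar U$ one extracts a finite such cover together with a common Lebesgue radius $0 < r < 1$. Uniform convergence of the $f_n$ on $\D(r)$ then ensures that for all large $n$ the image $f_n(\D(r))$ lies in one of the evenly covered opens, so that $f_n\vert_{\D(r)}$ lifts to $\tilde{f}_n : \D(r) \to V$ with origin over $u_n$. Because $f_n$ avoids $\cD$, the lift factors through $V\bsh\cD$, and because étale maps are local isomorphisms, $\tilde{f}_n$ has the same derivative $R_n t_n$ at the origin as $f_n$. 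Rescaling by $z \mapsto \tilde f_n(rz)$ exhibits a disc $(\D,0) \to (V\bsh\cD, u_n)$ of derivative $r R_n t_n$, forcing $\mathrm{Kob}_{V\bsh\cD,u_n}(t_n) \leq 1/(r|R_n|)$. Combined with the near-extremality of $g_n$ this gives $|R_n| \leq (1+\epsilon)/r$, contradicting $R_n \to \infty$.

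The only genuine obstacle beyond the argument of \ref{fact:kobcalc1} is the uniform validity of the lifting radius $r$ as the base point $u_n$ moves. I expect this to be routine once one exploits the assumption that $V$ sits over the \emph{entire} closure $\bar{U}$ rather than around a single point: the Lebesgue-number trick turns a pointwise étale local inverse into a family one, after which the rest is the obvious transcription of \ref{fact:kobcalc1} to the setting of a varying base point.
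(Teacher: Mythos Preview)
Your proposal is correct and matches the paper's approach, whose own proof is the single line ``As above, but take the origins in $U$''; you have simply supplied the details the paper elides. The Lebesgue-number step is slightly more than needed---after subsequencing so that $u_n\to u\in\bar U$, a single evenly-covered neighbourhood of $u$ already suffices to produce the lifting radius $r$---but this is harmless.
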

\begin{proof} As above, but take the origins in $U$. \end{proof}

and, in fact, this really has nothing to do with
$\cD$ being a divisor, {\it i.e.}

\begin{fact}\label{fact:kobcalc3} {\em The only place
in the above where we're using that $\cD$ is a 
(simple normal crossing) divisor or $\cX$ is smooth
is in order for $T_\cX (-\log\cD)$ to be a bundle, and
whence the infinitesimal form, \ref{defn:kob},
of the Kobayashi semi-distance is well defined.
The integrated form of the distance is, however,
always well defined, so, the above makes perfect
sense even if $\cX$ is singular, or $\cD$ is
any sub champ, in the integrated form, or,
even infinitesimal form if the only condition
is $\cX\bsh\cD$ smooth and irrespective of the
co-dimension of $\cD$.} 
\end{fact}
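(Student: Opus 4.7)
The plan is to observe that the proofs of \ref{fact:kobcalc1} and \ref{fact:kobcalc2} never appealed to smoothness of $\cX$ nor to the simple normal crossing structure of $\cD$, beyond what is needed for \ref{defn:kob} itself to have sense, {\it i.e.} for $T_\cX(-\log\cD)$ to exist as a coherent sheaf (in fact a bundle). Thus the whole of \ref{fact:kobcalc3} reduces to a re-reading of the two preceding arguments in greater generality, with no substantively new input.

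First, I would note that the integrated Kobayashi pseudo-distance $d_{\cX\bsh\cD}$ is defined on any complex analytic space, or indeed analytic champ, by chain-infima of Poincar\'e distances along holomorphic discs into $\cX\bsh\cD$, and that the distance-decreasing property under any holomorphic map is tautological. Consequently the right-hand upper bounds in \ref{fact:kobcalc1} and \ref{fact:kobcalc2} extend verbatim, since they come purely from composing with the given map $V\to\cX\bsh\cD$, respectively $V\bsh\cD\to\cX\bsh\cD$, and require no smoothness.

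For the non-trivial left-hand inequalities, the contradiction argument of \ref{fact:kobcalc1} should be re-run in integrated form. Suppose, for $x,y\in\cX\bsh\cD$ sufficiently close, that $d_V(x,y)/d_{\cX\bsh\cD}(x,y)$ is unbounded; then one extracts chains in $\cX\bsh\cD$ whose constituent discs $f_n:(\D,0)\to(\cX\bsh\cD,x)$ have arbitrarily large ``effective modulus of derivative'' compared with discs $g_n:(\D,0)\to V$ realising $d_V(x,y)$ up to $\e$. The hypothesised compactness of pointed discs into $\cX\bsh\cD$ provides, after sub-sequencing, a limit disc in $\cX$, and hence a definite radius $r>0$ on which $f_n(\D(r))\sbs V$, just as in the original proof; replacing $f_n$ by its restriction to $\D(r)$ and composing with the given map $V\to\cX\bsh\cD$ converts the chain into one in $V$ at the fixed cost $c(x,V)$.

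The main subtlety will be that intermediate endpoints of such chains can approach $\cD$ or leave $V$, so a one-point extraction is insufficient; this is handled exactly as in \ref{fact:kobcalc2}, by letting origins range over a relatively compact $U\sbs\cX\bsh\cD$ and invoking the uniform convergence hypothesis there, producing a single constant $c(U,V)$. The concluding sentence of \ref{fact:kobcalc3}, that the infinitesimal form \ref{defn:kob} suffices under only the hypothesis that $\cX\bsh\cD$ is smooth---regardless of the codimension or singular nature of $\cD$ inside $\cX$---is then immediate: the tangent bundle of the smooth open $\cX\bsh\cD$ always exists, the infinitesimal Kobayashi metric is intrinsically defined on it, and the arguments of \ref{fact:kobcalc1} and \ref{fact:kobcalc2} invoke tangent vectors only at points of $\cX\bsh\cD$, never along $\cD$.
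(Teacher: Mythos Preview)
The paper gives no proof for this fact: it is stated as a self-evident observation, with the entire content being the remark that nothing in the arguments of \ref{fact:kobcalc1}--\ref{fact:kobcalc2} used smoothness of $\cX$ or the divisorial nature of $\cD$ beyond making $T_\cX(-\log\cD)$ a bundle. Your proposal is correct and in the same spirit, but goes further by actually sketching how the contradiction argument transports to the integrated (chain) form of the Kobayashi distance; this is a genuine expansion of what the paper writes, since the paper simply asserts the integrated version ``makes perfect sense'' without spelling out the chain argument or the issue of intermediate endpoints escaping $V$. Your treatment of that subtlety via \ref{fact:kobcalc2} is the right idea, though the phrase ``effective modulus of derivative'' is vague in the integrated setting and would need to be replaced by a direct comparison of Poincar\'e lengths of chain segments if one wanted a fully rigorous write-up.
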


There are, however, differences in the behaviour
of the Kobayashi metric according to the 
co-dimension of the boundary, but in the
presence of completeness such as \ref{defn:bloch}.(c),
or \ref{summaryd:crit}.(c), what the
difference may be is by \ref{fact:kobcalc1}-\ref{fact:kobcalc3}
a wholly local problem. As such let us discuss
some such problems by way of,

\begin{rmk}\label{rmk:koblocal} {\em Suppose
the boundary were indeed a simple normal
crossing divisor, then, by the uniformisation
theorem, for $V$ as above sufficiently small
$\Kob_{V\bsh \cD}$ is complete and comparable
above and below to \ref{eq:complete1}.
Thus,

{\bf (a)} Hypothesis 
as in \ref{fact:kobcalc2} and say $O\sbs U$ open, 
$\Kob_{\cX\bsh\cD}$ is complete in $O$. 

\noindent At the other extreme, if $\cD$ were a
smooth point, $p$, say, to avoid 
notational confusion, of $\cX$, then completeness
is impossible since no matter how
small $V$, $\Kob_{V\bsh p} =\Kob_V$,
and we have a further confirmation of
the need to fill the boundary in
\ref{summaryd:crit}. On the other
hand if it were an elliptic Gorenstein
singularity, then for $V$ small,
$\Kob_{V\bsh p}$ is complete. Indeed,
up to the action of a finite group,
the cases are two: the minimal smooth
resolution of the singularity has an
exceptional divisor, $E$, an elliptic
curve, or a cycle of rational curves,
which arise as cusps
on quotients of the
ball, respectively the bi-disc, so:

{\bf (b)} Under the hypothesis
of \ref{fact:kobcalc3} and say $\cD$
a point $p$, then if $\cX$ is
smooth at $p$, $\Kob_{\cX\bsh p}$ is
never complete in the complement of
any open by $p$. If, however, $p$
is elliptic Gorenstein, then 
$\Kob_{\cX\bsh p}$ would be
complete in a neighbourhood of $p$,
and for $\rho:\tcx\ra \cX$ a minimal
resolution of the singularity, $\rho^*\Kob_{\cX\bsh p}$
is compatible above and below with
\ref{eq:complete1}.

\noindent Finally, consider a case where there
is a difference between \ref{defn:bloch},
and \ref{summaryd:crit}, {\it e.g.} $X$
of the former is a surface with a
Duval singularity, $p$, obtained by
contracting some divisor $F$ on the
minimal smooth resolution, $Y$, and $X$
is the minimal model of $Y\bsh F$. As such, in \ref{summaryd:crit},
we should look to the Vistoli cover, \ref{factd:vistoli}, $\cX\ra X$,
around which $p$ is smooth, and
everything is as in (b) above. In
\ref{defn:bloch}, however, we're 
really working with the Kobayashi
distance in $X$, which may not have
an infinitesimal form, and is locally,
whence also globally under hypothesis
such as \ref{fact:kobcalc3}, different
from the Kobayashi distance in $\cX$.}
\end{rmk}

The upshot of which is that the definitions
\ref{defn:bloch}, and \ref{summaryd:crit}
are best possible, the mechanism employed in,
for example, 
\cite{green}, to create
``counterexamples'' to the 
Bloch principle is clear, while \ref{summaryd:crit}
implies that
Kobayashi's intrinsic metric on 
the canonical model $(S, B)$ of a quasi-projective surface
with elliptic Gorenstein locus $P$ is complete on
$\cS \bsh\{B\cup P\}$,
and
compatible above and below to the K\"ahler-Einstein
metric around $B \cup P$ provided 
that GG holds, and
we're bounded away from the
exceptional set $Z$. Whence,
it only remains to investigate the degeneration
around $Z$. To this end, modulo eschewing
any manifestly unnecessary blow ups off $Z$, let $\rho:X\ra S$ be
as in the proof of \ref{cor:bloch}, with,
$(X,D){\build\ra_{}^{p}} (S_0,B_0){\build\ra_{}^{q}} (S,B)$
the factorisation of $\rho$ through a minimal
modification of any elliptic Gorenstein singularities,
{\it  i.e.} modulo change of notation,
$(\tilde{S},\tb)$ of \ref{fact:surbloch}.
Further let,
$D$, $\pa$, $\zeta$ be the total transforms of $B_0+q^*Z$, 
$B_0$, $Z$, respectively, in $X$, and, again in $X$,  
$\tb$,   
the proper transform of $B_0$, with $\check{Z}$
defined by $D=\pa+\check{Z}$. Unravelling all of \ref{fact:taut1}-\ref{fact:taut3},
according to a fashion suggested by
\ref{prop:main} amounts for $f:\D\ra X\bsh\pa$
and $f':\D\ra\bp (\Om^1_X (\log D))$ to the inequality,
\begin{equation}\label{eq:degen1}
\log [\frac{\Vert f_*(\dz)\Vert\un_{\check{Z}}}{\vert\log\un_{\zeta}\vert}](0)
+ \nint_{\D (r)} (f')^*\mathrm{c}_1 (\ov L)- f^*\mathrm{c}_1 (\check{Z})
\leq 
\log \frac{l^c(r)}{r} \cdot\nint_{\D(r)} f^*\om
\end{equation}
where the implied metricisation, $\Vert\,\Vert$
of $\Om^1_X (\log D)$ is smooth around
components of $\z$, complete around components
in $\tb$,  the distance
functions to divisors are appropriately bounded
above as post \ref{eq:complete2}, and $l^c$ is the
length of the boundary of $\D (r)$ in a complete
metric for $X\bsh D$. 
By \ref{fact:kobcalc1}-\ref{fact:kobcalc3},
and \ref{eq:misha} we know exactly what 
the Kobayashi metric looks like unless
there are maps $f_n$ with unbounded Nevanlinna
area for any $r$, and origins $f_n(0)$ 
close
to $\z$.
In this scenario an easy case to 
distinguish- since it gives a better
bound almost trivially- is when 
presented with such a sequence
of discs $f_n$ with large derivatives
at the origin,
for some $r$ outside a
set of finite hyperbolic measure, one finds
the condition,
\begin{equation}\label{eq:degen2}
\limsup_n \left( \nint_{\D(r)} f_n^*\om_{X\bsh\tb}\right)^{-1} 
\nint_{\D (r)} f_n^*\mathrm{c}_1 (\ov L) =\infty
\end{equation}
where the metric $\om_{X\bsh\tb}$ is
complete on $X\bsh\tb$. Should this occur,
then, apart from the term at the
origin, 
the tautological degree dominates
everything in \ref{eq:degen1} except possibly some,
$(1+\d)\log\log\log \un_{\z}$ at the
origin, for any $\d>0$, arising from
the error in \ref{eq:complete2}, and \ref{claim:lang1}, so that
in this case, there must be a constant
$C(\d)>0$, such that for $n$ sufficiently large,
we have the bound:
\begin{equation}\label{eq:degen3}
\left[\frac{\Vert f_{n*}(\dz)\Vert\un_{\check{Z}}}
{\vert\log\un_{\z}\vert (\log\vert\log\un_{\z}\vert)^{1+\d}    }\right](0) 
\leq C(\d)
\end{equation}
As such we may otherwise suppose for
any sequence of discs that we have
to deal with (normalised not quite as in
\ref{eq:cint}, but rather on dividing
by the Nevanlinna area of $\om_{X\bsh\tb}$) 
that as per \ref{sch:systems},
we'll be able to lift any limiting
current $T$ of discs to a closed
positive derivative $T'$, or
more precisely, since $T$ must
be supported on $\z$, a current
away from $\z\cap \tb$, while
at such singularities the background
metric on fibres of the projective
tangent space $P:=\bp(\Om^1_X(\log D))$
may be degenerate, {\it i.e.} $\mathrm{c}_1(\ov L)$
in the above metricisation, so around
such points we only suppose that $T$
is finite valued on forms which can
be bounded above and below by a linear
combination of $\mathrm{c}_1(\ov L)$,
and $\om_{X\bsh\tb}$.
Thus, we're varying
the proof of 
\ref{prop:main} in order
to get a 
bound on the degeneracy of
the Kobayashi metric which is
very close to \ref{eq:degen3}.
To this end
for $C\sbs \z\sbs D$ as towards the end
of the proof of \ref{prop:main}, and $t\geq 0$ the
metricisation of the residual
section $S_C\sbs P_C$ defined
by the choice of norm on $\Om^1_X(\log D)$ 
consider
the function,
\begin{equation}\label{eq:degen4}
\phi_C:=\max\{-\log\vert\log\un_C\vert,\, \log t\}
\end{equation}
where in an abuse of notation we 
identify $t$ with a lifting to $P$,
since
all liftings result in the same
behaviour close to $S_C$, {\it i.e.}
comparable psh. functions achieving
the value $-\infty$ only on $S_C$, and,
supposing suitable normalisation
be it of the supremum whether of
$\un_C$ or $t$ an additional lower
bound,
\begin{equation}\label{eq:degen5}
\ddc\phi_C + N\om_P \geq 0
\end{equation}
on the whole of $P$ for some $N\in \bn$,
where, $\om_P=\mathrm{c}_1(\ov L) + N\om_{X\bsh \tb}\geq 0$, 
for the same
appropriately large $N$.
Now, for much the same reasoning
as that which leads from \ref{eq:degen2}
to \ref{eq:degen3}, we may without
loss of generality suppose that
for $r$ outside a set of finite
hyperbolic measure,
\begin{equation}\label{eq:degen6}
\liminf_n \left(\nint_{\D (r)} f_n^*\om_{X\bsh\tb}\right)^{-1} f_n^*\phi_C(0)> -\infty
\end{equation}
since otherwise, we can subsequence
to obtain for any $\e>0$ and $n$
sufficiently large, the lower bound,
\begin{equation}\label{eq:degen7}
\nint_{\D (r)} (f'_n)^*\mathrm{c}_1 (\ov L) \, \geq \,
\nint_{\D (r)} (f_n)^*\om_{X\bsh\tb}\, +\, \e (f_n^*\phi_C)(0)
\end{equation}
which, up to an adjustment for
$\exp(\e\phi)$ in the origin, leads to a similar bound 
to \ref{eq:degen3} for the
derivative at the origin,
the exact form of which we eschew, since,
like \ref{eq:degen3}, it's better than
our final bound, \ref{prop:degen}, which will correspond to 
the case $\e=1$. As such,  
supposing
\ref{eq:degen6}, the non-negative
absolutely continuous 1-form defined
by the left hand side of
\ref{eq:degen5} has, for every $n$,
a Nevanlinna integral of the same
order as that of $\om_{X\bsh\tb}$, independently
be it of $n$ or $r$. In particular,
even though there is no algebraic
variety on which the $\mathrm{c}_1(E_n)$
of \ref{inter:1} would be equal to
$-\ddc\phi_C$ Lebesgue  almost everywhere
(equivalently we would like to replace $\vert x\vert^{1/n}$
in op. cit. by $\vert\log\vert x\vert\vert^{-1}$) the proof
of \ref{claim:nonneg} remains valid 
provided that,

\noindent{\bf (a)} The Nevanlinna area in
the complete metric on $X\bsh D$ remains
comparable to that of $\om_{X\bsh\tb}$,
whence guaranteeing analogous conditions 
to \ref{claim:nonneg}. 

\noindent{\bf (b)}  We change appropriately the adjunction formula for
the chern class of a residual section
according to the above metricisation of $L$.

\noindent The first  point we have
already encountered how to proceed, {\it i.e.} should this fail then by 
\ref{eq:complete3} we could replace
$\phi_C$ in \ref{eq:degen7} by $\log\vert\log\un_\z\vert$,
and obtain a better estimate on
the degeneration of the Kobayashi metric
than that which will ultimately be
proposed. As to the second item,
the problem is only relevant at
the finite set $\z\cap\tb$, at which
we can model $\om_{X\bsh\tb}$ on
a complete K\"ahler-Einstein metric
of curvature $-1$, with a K\"ahler
coordinate along the component of
$\z$, and from which a metric as
prescribed on $\Om^1_{X}(\log D)$,
satisfying adjunction whereby,
the metricisation of $\cO(K_X+D)$ 
as a sub-bundle of $\Om^1_{X}(\log D)\vert_C$
is, up to a bounded continuous
function, the restriction of the
determinant.
Consequently, we may proceed
as per \ref{prop:main} to achieve,
\begin{equation}\label{eq:degen8}
\nint_{\D (r)}  (f'_n)^*\mathrm{c}_1 (\ov L) -
(f_n)^*\mathrm{c}_1 (\ov{K_X+D}) \geq   (f_n^*\phi)(0)
+ \circ (\nint_{\D(r)} (f_n)^*\om_{X\bsh\tb})
\end{equation}
Writing $D=\pa + \check{Z}$, and taking $\ov{E}$ as in
\ref{eq:discrep}, albeit smoothly metricised,
we have for 
$(X,D){\build\ra_{}^{p}} (S_0,B_0){\build\ra_{}^{q}} (S,B)$,
the above factorisation of $\rho$, and
$K_{S_0}+B_0=q^*(K_{S}+B)$ 
metricised either in the K\"ahler-Einstein metric
or a model thereof,
\begin{equation}\label{eq:degen9}
\mathrm{c}_1\left(\ov{K_X+\pa}-p^*\ov{K_{S_0}+B_0}-\ov{E}\right) =
\ddc(\log\log^2\un_{B_0} -\log\log^2\un_{\tb} + \psi)
\end{equation}
where $\psi$ is bounded above and below, so,
say, below by zero, whence:
$$
\nint_{\D (r)} (f_n)^*\mathrm{c}_1 (\ov{K_X+\pa}) \,\geq
\nint_{\D (r)} (f_n)^*\mathrm{c}_1 (\ov{K_{S}+B}) 
- \log f_n^*\left[\frac{\vert\log\un_{B_0}\vert}{\un_E\vert\log\un_{\tb}\vert}
\right](0)
$$
The
intersections along $\check{Z}$ in \ref{eq:degen8}
and \ref{eq:degen1} cancel, we
concede $\phi\geq -\log\vert\log\un_\z\vert$
to avoid a bound that might better  that
afforded by the supposition implying \ref{eq:degen8},
use the ampleness of $K_{S}+B$ as in
the proof of \ref{cor:bloch},
and so as per
\ref{eq:degen3}, 
\begin{prop}\label{prop:degen}
Suppose the infinito property GG of \ref{summaryd:crit}.(d) holds for a
quasi-projective algebraic surface, then on the canonical
model $(S,B)$, of the necessarily general type pair, 
the Kobayashi-metric is bounded above
by a constant times the K\"ahler-Einstein metric, and
conversely, below, except around the (minimal) exceptional locus $Z$, where,
notations as above,
for every $\d>0$ 
there is a constant $c(\d)>0$ such that at worst: 
$$
\rho^*\Kob_{S\bsh\{B\cup P\}}\geq 
c(\d) 
\left[\frac{\un_{\check{Z}}\un_E\vert\log\un_{\tb}\vert}
{\vert\log\un_{\z}\vert^2 (\log\vert\log\un_{\z}\vert)^{1+\d}\vert\log p^*\un_{B_0}\vert}\right]
\tau 
$$
For $\tau$ a smooth metric on $\Om_X(\log D)$ except
around the components $\tb$ of the proper transform
of $B_0$, {\it i.e.} of $B$ and the elliptic
Gorenstein singularities, $P$, where it is mildly degenerate 
being modelled 
on a complete metric, \ref{eq:complete1}, around the
said components.
\end{prop}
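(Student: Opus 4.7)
The plan is to treat the (easy) upper bound and the (harder) lower bound separately, following the outline encoded in \ref{eq:degen1}--\ref{eq:degen9}. The upper bound of the Kobayashi metric by a constant times the K\"ahler--Einstein metric is essentially formal: by \ref{cor:bloch}, the GG hypothesis yields complete hyperbolicity of $\cS \bsh \{B \cup P\}$ modulo $Z$, and \ref{fact:kobcalc1}--\ref{fact:kobcalc3} combined with the standard comparison of the K\"ahler--Einstein metric with the model \ref{eq:complete1} near $B \cup P$ delivers the claim around the boundary; on compacts of the interior away from $Z$ one uses \ref{fact:misha} together with \ref{prop:nobubbles}.

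For the lower bound I proceed by contradiction. If it fails there is a sequence $f_n : (\D, 0) \to \cS \bsh \{B \cup P\}$ of pointed discs whose origins $f_n(0)$ converge into $\z$ --- else the sequence would lie in a region where \ref{fact:kobcalc1}--\ref{fact:kobcalc3} already bound $\Kob$ below --- and whose derivatives violate the claimed bound. Using \ref{lem:Dorigin} I may assume these derivatives are uniformly bounded below, so that the Nevanlinna area grows unboundedly for some $r$ outside a set of finite hyperbolic measure. Here the first dichotomy appears: either \ref{eq:degen2} holds, in which case the tautological inequality \ref{eq:degen1} (using \ref{fact:taut2}, \ref{fact:taut3} and \ref{claim:lang1} to dispose of the boundary length term) already forces the strictly better estimate \ref{eq:degen3}, a contradiction; or the logarithmic derivatives do not dominate, and \ref{sch:systems} delivers a closed positive derived current $T'$ on $P := \bp(\Om^1_X(\log D))$ lifting a limit current $T$ necessarily supported on $\z$.

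The main obstacle is a transcendental variant of \ref{prop:main}: where the proof of \ref{claim:nonneg} blew up along the residual section $S_C \subset P_C$ using the algebraic ideals $(t^n, x)$ whose normalised logarithms approximate $-\log$ of the distance to $S_C$, one must now substitute the psh function $\phi_C$ of \ref{eq:degen4}, whose only singularity is a pole along $S_C$ and which globally satisfies the Hessian bound \ref{eq:degen5}. The analogue of \ref{claim:nonneg} for $\phi_C$ goes through modulo two items already flagged in the discussion: (a) the Nevanlinna area in the complete metric on $X \bsh D$ must remain comparable to that of $\om_{X \bsh \tb}$, failing which \ref{eq:complete3} again yields a better estimate than the one sought; and (b) the adjunction for the residual section requires near $\z \cap \tb$ a choice of background metric whose restriction to $\cO(K_X + D)\vert_C$ is, up to bounded continuous functions, the determinant of the ambient metric on $\Om^1_X(\log D)$, which is effected by taking K\"ahler coordinates along $\z$ for the local model of a complete K\"ahler--Einstein metric of curvature $-1$. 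A subsidiary dichotomy via \ref{eq:degen6}--\ref{eq:degen7} disposes of the finite-or-infinite case of the $\phi_C(0)$ term. These steps deliver \ref{eq:degen8}.

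Assembling the pieces, the discrepancy formula \ref{eq:discrep} and the local adjunction \ref{eq:degen9} relate $K_X + \pa$ to $p^*(K_{S_0} + B_0)$ up to an $\ov E$-error; the $\check Z$-intersections in \ref{eq:degen1} and \ref{eq:degen8} cancel; and, as in the concluding step of \ref{cor:bloch}, the ampleness of $K_S + B$ on the canonical model --- which exists since GG forces log-general type --- together with the inequality $T^2 \geq 0$ furnished by \ref{fact:cart} (the origins being bounded away from the support of $\rho_*T$) contradicts the intersection estimate obtained on $\rho_*T$. Conceding the weakest estimate $\phi_C \geq -\log\vert\log\un_\z\vert$ at the origin and absorbing an error of order $(\log\vert\log\un_\z\vert)^{1+\d}$ from \ref{claim:lang1} reads off the precise scaling factor $\vert\log\un_\z\vert^{-2}(\log\vert\log\un_\z\vert)^{-1-\d}$ stated in the proposition.
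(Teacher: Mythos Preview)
Your proposal tracks the paper's argument closely and correctly identifies all the essential steps: the dichotomies at \ref{eq:degen2}--\ref{eq:degen3} and \ref{eq:degen6}--\ref{eq:degen7}, the psh substitute $\phi_C$ for the algebraic blow-ups of \ref{claim:nonneg}, the two provisos (a) and (b), and the final assembly via \ref{eq:degen8} and \ref{eq:degen9} with the $\check Z$-cancellation. Two points merit correction.

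First, a minor one: \ref{lem:Dorigin} is out of place. That lemma produces an origin bounded \emph{away} from $Z$, whereas the whole situation here is that $f_n(0)$ accumulates on $\z$ (otherwise \ref{fact:kobcalc2} already controls the metric). The failure of the claimed Kobayashi bound is itself what furnishes arbitrarily large $\Vert (f_n)_*\partial_z\Vert(0)$; no repositioning of the origin is wanted, and none is possible without destroying the very configuration being analysed.

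Second, and this is the real gap: your final contradiction via $T^2 \geq 0$ is unjustified. You assert that the origins are bounded away from the support of $\rho_* T$, but they are not --- the origins lie near $\z$, precisely where $T$ is supported. The $T^2$-step in \ref{cor:bloch} depended essentially on the origins avoiding $Z \cup B$, and that hypothesis is unavailable here. The paper's endgame is a direct estimate rather than a self-intersection contradiction: one combines \ref{eq:degen1} with \ref{eq:degen8} and the displayed inequality after \ref{eq:degen9}, all \emph{before} passing to a limiting current, to bound $\log\Vert(f_n)_*\partial_z\Vert(0)$ above by the various origin-distance terms plus contributions that, once ampleness of $K_S+B$ on the canonical model is invoked so that $\nint (f_n)^* c_1(\ov{K_S+B})$ contributes with the correct sign, are negligible relative to the Nevanlinna area. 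After conceding $\phi \geq -\log\vert\log\un_\z\vert$ and absorbing the \ref{claim:lang1} error into the $(\log\vert\log\un_\z\vert)^{1+\d}$ factor, the stated bound is read off directly, ``as per \ref{eq:degen3}''.
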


\newpage

\end{document}